\newtheorem{lemma}{Lemma}
\newtheorem{proposition}[lemma]{Proposition}
\newtheorem{theorem}[lemma]{Theorem}
 \newcommand{\be}{\begin{equation}}
 \newcommand{\ee}{\end{equation}}
 \newcommand{\bea}{\begin{eqnarray}}
 \newcommand{\eea}{\end{eqnarray}}
 \newcommand{\beas}{\begin{eqnarray*}}
\newcommand{\eeas}{\end{eqnarray*}}
\newcommand{\crl}[1]{\ensuremath{ \left\{ #1 \right\} }}
\newcommand{\edg}[1]{\ensuremath{\! \left[ #1 \right] }}
\newcommand{\brak}[1]{\ensuremath{\left( #1 \right)}}
\newcommand{\abs}[1]{\ensuremath{ \left| #1 \right| }}
\providecommand{\N}{{\ensuremath{\mathbb{N}}}}
\providecommand{\R}{{\ensuremath{\mathbb{R}}}}
\providecommand{\B}{\mathcal{B}}
\renewcommand{\P}{\mathbbm{P}}
\providecommand{\cR}{\mathcal{R}}
\providecommand{\fu}{\mathfrak{u}}
\providecommand{\fb}{\mathfrak{b}}
\providecommand{\cY}{R}
\providecommand{\E}{{\ensuremath{\mathbbm{E}}}}
\providecommand{\N}{{\ensuremath{\mathbbm{N}}}}
\providecommand{\R}{{\ensuremath{\mathbbm{R}}}}
\providecommand{\E}{{\ensuremath{\mathbb{E}}}}
\newcommand{\Borel}{\mathcal{B}}
\newcommand{\F}{{\ensuremath{\mathcal{F}}}}
\newcommand{\Rd}{\mathbb{R}^d}
\newcommand{\cL}{{\ensuremath{\mathcal{L}}}}
\begin{document}
\setlength{\droptitle}{-4em} 
\title{
An efficient Monte Carlo scheme for Zakai equations}
\author{Christian Beck$^{1,3}$, Sebastian Becker$^{1}$, 
	Patrick Cheridito$^1$, \\
	Arnulf Jentzen$^{2,3}$ and Ariel Neufeld$^4$
	\bigskip
	\\
	\small{$^1$ Department of Mathematics, ETH Zurich, Switzerland}
	\smallskip
	\\
	\small{$^2$ School of Data Science and Shenzhen Research Institute of Big Data,}\\
	\small{The Chinese University of Hong Kong, Shenzhen, China}
	\smallskip
	\\
	\small{$^3$ Applied Mathematics: Institute for Analysis and Numerics,}\\
	\small{Faculty of Mathematics and Computer Science, University of M\"unster, Germany}
	\smallskip
	\\
	\small{$^4$ Division of Mathematical Sciences, School of Physical and Mathematical Sciences,}\\
	\small{Nanyang Technological University, Singapore}
		}
\date{}
\maketitle

\begin{abstract}
In this paper we develop a numerical method for efficiently approximating solutions 
of certain Zakai equations in high dimensions. 
The key idea is to transform a given Zakai SPDE
into a PDE with random coefficients. We show that under suitable regularity 
assumptions on the coefficients of the Zakai equation, the corresponding random PDE
admits a solution random field which, for almost all realizations of the
random coefficients, can be written as a classical solution of a linear parabolic PDE. 
This makes it possible to apply the Feynman--Kac formula to obtain 
an efficient Monte Carlo scheme for 
computing approximate solutions of Zakai equations. The approach achieves good 
results in up to 25 dimensions with fast run times.
\end{abstract}
\vspace{0.2cm}
\textbf{Keywords:} Zakai equation, nonlinear filtering problems,
stochastic partial differential equations, Doss--Sussmann transformation,
Feynman--Kac representation

\section{Introduction}
The goal of stochastic filtering is to estimate the conditional distribution of a not directly 
observable stochastic process blurred by measurement noise. The process of interest is usually called 
\textit{signal process}, while the observed process is referred to as 
\textit{observation process}. Whereas the signal process 
follows a hidden dynamic, probing the system only reveals the observation 
process, which, in general, might depend nonlinearly on the signal process and, in addition, is
blurred by measurement noise. Stochastic filtering problems were first studied in connection with tracking and signal processing (see the seminal works by Kalman~\cite{Kalman} and Kalman \& Bucy~\cite{KalmanBucy}) but soon turned out to also be relevant in a variety of other applications in
finance, the natural sciences and engineering. Among others, nonlinear filtering problems 
naturally arise in e.g.,
financial engineering (\cite{BrigoHanzon98,CeciColaneri17,CoculescuGemanJeanblanc08,DuffieLando01,FreyRunggaldier10,FreySchmidt12}), weather forecasting 
(\cite{buehner2017ensemble,cassola2012wind,che2016wind,duc2015ensemble,falissard2013genuinely,pelosi2017adaptive}) or chemical engineering
(\cite{BudmanHolcombMorari91,ChenSun91,Rutzler87,SeinfeldGavalasHwang71,SolimanRay79b,WindesCinarRay89}). For further applications of nonlinear filtering, we refer to the survey paper \cite{SurveyNonlinFiltering}.
Stochastic filtering problems are naturally related to stochastic partial differential 
equations (SPDEs) since in continuous time, the (unnormalized) density of the 
unobserved signal process given the observations is described by a 
suitable SPDE, such as the Zakai equation \cite{zakai1969optimal} or 
Kushner equation \cite{kushner1964differential}. The SPDEs arising in this 
context can typically not be solved explicitly but instead, have to be computed numerically.
Moreover, they often are high-dimensional as the number of dimensions corresponds to the state space 
dimension of the filtering problem. 

In this paper, we focus on Zakai equations with coefficients that satisfy
certain regularity conditions. 
Let us assume the signal follows the $d$-dimensional dynamics 
\[
Y_t = Y_0 + \int_0^t \mu(Y_s)\,ds + \sigma W_t
\] 
for a $d$-dimensional random vector $Y_0$ with density
$\varphi \colon \R^d \to [0,\infty)$, a sufficiently regular function 
$\mu \colon \R^d \to \R^d$, a constant $d \times d$-matrix
$\sigma$ and a $d$-dimensional Brownian motion $(W_t)_{t \in [0,T]}$
independent of $Y_0$, while we observe a $k$-dimensional process of the form
\[
Z_t = \int_0^t h(Y_s) ds + V_t
\]
for a sufficiently regular function 
$h \colon \R^d \to \R^k$ and a $k$-dimensional Brownian motion 
$(V_t)_{t \in [0,T]}$ independent of $Y_0$ and $(W_t)_{t \in [0,T]}$. 
Then the solution of the corresponding Zakai equation
\begin{align}\label{eq:Zakai}
&X_t(x) 
= \varphi(x) +
	\int_0^t \bigl[\tfrac12 \operatorname{Trace}_{\R^d}\bigl( \sigma \sigma^{T} 
	\operatorname{Hess}(X_s)(x) \bigr) - \operatorname{div} (\mu X_s)(x) \bigr] \,ds 
	+ \int_0^t X_s( x ) \langle h(x), dZ_s\rangle_{\R^d}
\end{align}
describes the evolution of an unnormalized density of the conditional distribution of $Y_t$
given observations of $Z_s$, $s \le t$; that is,
\[
\P \edg{Y_t \in A \mid Z_s, s \in [0,t]}
= \frac{\int_A X_t(x)\,dx}{\int_{\R^d} X_t(x)\,dx} 
\quad \mbox{for every Borel subset } A \subseteq \R^d.
\]
Our numerical method is based on a transformation which transforms a 
Zakai SPDE of the form \eqref{eq:Zakai} into a 
PDE with random coefficients. We show that under suitable conditions on 
the coefficients of the Zakai SPDE, the solution 
of the resulting random PDE is $\omega$-wise 
a classical solution of a linear parabolic PDE. This makes it possible 
to apply the Feynman--Kac formula to obtain an efficient Monte Carlo scheme 
for the numerical approximation of solutions of high-dimensional Zakai equations.
The following is this paper's main theoretical result.

\begin{theorem} \label{thm:main}
	Let $T \in (0, \infty)$, $d,k  \in \N$, $\sigma \in \R^{d\times d}$, and consider 
	functions $\varphi \in C^2(\R^d, [0, \infty))$, $\mu \in C^3(\R^d, \R^d)$ and
	$h \in C^4(\R^d, \R^k)$ such that $\varphi$ has at most polynomially 
	growing derivatives up to the second order, $\mu$ has bounded derivatives 
	up to the third order and $h$ has bounded derivatives up to the fourth order.
	Let $(\Omega, \F, (\F_t)_{t \in [0, T]}, \P)$ be a filtered probability space satisfying 
	the usual conditions\footnote{A filtered probability space $(\Omega, \F, (\F_t)_{t \in [0, T]}, \P)$ 
	is said to satisfy the \emph{usual conditions} if for all $t \in [0, T)$, one has
	$\bigcup_{A \in \F, \P(A) = 0} \{B \subseteq \Omega \colon B \subseteq A\} \subseteq \F_t =  \bigcap_{s \in (t, T]} \F_s$.} which supports standard $(\F_t)_{t\in [0, T]}$-Brownian motions
	$W,U \colon [0, T] \times \Omega \to \R^d$ and 
	$V \colon [0, T] \times \Omega \to \R^k$ with continuous sample paths
	such that $W$ and $V$ are independent. Let
	$Y \colon [0, T] \times \Omega \to \R^d$ and $Z \colon [0, T] \times \Omega \to \R^k$
	be $(\F_t)_{t \in [0, T]}$-adapted stochastic processes such that $\P(Y_0 \in A) = \int_A \varphi(x)\,dx$ for every Borel subset 
	$A \subseteq \R^d$ and
	\begin{equation} \label{eq:YZ}
	Y_t = Y_0 + \int_0^t \mu(Y_s)\,ds + \sigma W_t,
	\quad
	Z_t = \int_0^t h(Y_s)\,ds + V_t \quad \mbox{for } t \in [0,T].
	\end{equation} 
For all $z \in C([0, T], \R^k)$, $t \in [0, T]$ and $x \in \R^d$, let 
$R^{z,t,x} \colon [0, t] \times \Omega \to \R^d$ be an $(\F_s)_{s \in [0, t]}$-adapted 
stochastic processes satisfying\footnote{By $D h(x)$ we denote the Jacobian matrix
$\brak{\tfrac{\partial}{\partial x_j} h_i(x)}_{1 \le i \le k, 1 \le j \le d}$, $x \in \R^d$.}
\begin{equation} \label{eq:Zakai:sde_for_mc_R}
R^{z, t, x}_s = 
	x + \int_0^s \edg{ \sigma \sigma^{T}[D h(R^{v, t, x}_r)]^{T} z(t-r)
	- \mu(R^{v,t,x}_r)} dr
	+ \sigma U_s \quad \mbox{for all } s \in [0,t].
	\end{equation} 
Moreover, let for all $z \in C([0, T], \R^k)$, the functions 
$B_z, u_z  \colon [0, T] \times\R^d \to \R$
be given by\footnote{For $d \in \N$, we denote by $\langle\cdot, \cdot \rangle_{\R^d}\colon \R^d \times \R^d \to \R$ the standard scalar product given by 
	$\langle x, y \rangle_{\R^d} = \sum_{i=1}^d x_i y_i$ and by 
	$\|.\|_{\R^d} \colon \R^d \to [0,\infty)$ the corresponding norm 
	$\|x \|_{\R^d} = \sqrt{\langle x, x \rangle}$ .}
	\be \label{eq:mathfrak-B}
	\begin{split}
	B_z(t, x) =
	&
	\tfrac12 \left\langle \sigma^{T} [D h(x)]^{T} z(t), 
	\sigma^T [D h(x)]^{T} z(t)\right\rangle_{\R^d} 
	- \tfrac12 \left\langle h(x), h(x) \right\rangle_{\R^k} \\ & +
	\tfrac12 \operatorname{Trace}_{\R^d}\bigl( \sigma \sigma^{T} 
	\operatorname{Hess}_x(\langle h(x), z(t) \rangle_{\R^k}) \bigr) 
	 - \langle \mu(x), [D h(x)]^{T} z(t) \rangle_{\R^d}
	- \operatorname{div}(\mu)(x) 
	\end{split}
	\ee
	and 
	\be \label{eq:u}
	u_z(t, x) 
	= 
	\E\Bigl[ \varphi(R^{z, t, x}_t) \exp\Bigl( {\textstyle \int\limits_{0}^{t}} 
	B_z(t-s, R^{z, t, x}_s)\,ds \Bigr) \Bigr],
	\quad t \in [0, T], \; x \in \R^d.
	\ee
 Then
	\be \label{eq:Zakai:definition_X}
	X_t(x, \omega) 
	= u_{Z(\omega)}(t, x) \exp\bigl( \langle h(x), Z_t(\omega) \rangle_{\R^k} \bigr),
	\quad \mbox{$t \in [0, T]$, $x \in \R^d$, $\omega \in \Omega$,}
	\ee
	is, up to indistinguishability, the unique random field
	$X \colon [0, T]\times\R^d\times\Omega \to \R$ satisfying the following properties:
	\begin{enumerate}[{\rm (i)}] 
		\item \label{it:Zakai-1}
		for all $t \in [0, T]$ and $x \in \R^d$, the mapping 
		$X_t(x) \colon \Omega \to \R$ is $\F_t$/$\Borel(\R)$-measurable, 
		
		\item \label{it:Zakai-2} for all $\omega \in \Omega$, the mapping 
		$(t, x) \mapsto X_t(x, \omega)$ is in $C^{0, 2}([0, T]\times\R^d, \R)$
		and there exist constants $a(\omega), c(\omega) \ge 0$ such that
		\[
		\sup_{t \in [0,T]} |X_t(x, \omega)| \le a(\omega) e^{c(\omega) \|x\|_{\R^d}}
		\quad \mbox{for all $x \in \R^d$,}
		\]
		
		\item \label{it:Zakai-3} $\mbox{}$\\[-10.8mm]
		\be \label{Xt}
		X_t(x) = \varphi(x) + \int_0^t \Bigl[ \tfrac12 \operatorname{Trace}_{\R^d}
		\bigl( \sigma \sigma^{T} \operatorname{Hess} (X_s)(x) \bigr) 
		- \operatorname{div}\bigl(\mu X_s \bigr)(x)\Bigr]\,ds + \int_0^t X_s(x)\,\langle h(x), dZ_s\rangle_{\R^k}
		\ee
	$\P$-a.s. for all $t \in [0,T]$ and $x \in \R^d$.
	\end{enumerate} 
\end{theorem}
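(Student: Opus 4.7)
The strategy is a pathwise Doss--Sussmann-type transformation that removes the stochastic integral from \eqref{Xt}. Setting formally $u_t(x, \omega) = X_t(x, \omega) \exp(-\langle h(x), Z_t(\omega)\rangle_{\R^k})$, the It\^o product rule applied to $X_t(x) = u_t(x) \exp(\langle h(x), Z_t\rangle_{\R^k})$ with $x$ held as a parameter (using $d[Z, Z]_t = I_k\, dt$) converts the stochastic integral $\int_0^t X_s(x) \langle h(x), dZ_s\rangle_{\R^k}$ together with the It\^o correction $\tfrac12 \int_0^t X_s(x) \|h(x)\|_{\R^k}^2 \,ds$ into a finite-variation equation for $u$. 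Expanding the spatial derivatives of the product $u_t(x) \exp(\langle h(x), Z_t\rangle_{\R^k})$ and comparing with the Zakai drift \eqref{Xt} shows that $u$ should $\omega$-wise satisfy the random linear parabolic PDE
\[
\partial_t u_t(x) = \tfrac12 \operatorname{Trace}_{\R^d}\bigl(\sigma \sigma^{T} \operatorname{Hess}_x u_t(x)\bigr) + \bigl\langle \sigma \sigma^{T} [Dh(x)]^{T} Z_t - \mu(x),\, \nabla_x u_t(x)\bigr\rangle_{\R^d} + B_Z(t, x) u_t(x),
\]
with $u_0 = \varphi$, the zeroth-order coefficient $B_Z$ being precisely the function defined in \eqref{eq:mathfrak-B}. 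The representation \eqref{eq:u} is then the Feynman--Kac formula for this PDE.

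\textbf{Existence.} For each deterministic path $z \in C([0, T], \R^k)$ the SDE \eqref{eq:Zakai:sde_for_mc_R} has a unique strong solution, twice continuously differentiable in $x$ with polynomially growing derivatives (standard Lipschitz/linear-growth theory, using the assumed boundedness of the derivatives of $\mu$ and $Dh$ and of $z$ on $[0, T]$). Classical Feynman--Kac regularity for linear parabolic equations with bounded smooth coefficients then gives $u_z \in C^{1, 2}([0, T] \times \R^d, \R)$ as a classical solution of the above PDE with $u_z(0, \cdot) = \varphi$ and polynomial growth in $x$ inherited from $\varphi$. Since the coefficients of \eqref{eq:Zakai:sde_for_mc_R} and the integrand in \eqref{eq:u} only involve $z(\tau)$ for $\tau \in [0, t]$, the value $u_z(t, x)$ depends on $z$ only through $z|_{[0, t]}$; combined with the independence of $U$ and $Z$, this gives the $\F_t$-measurability in (i) for $X$ defined by \eqref{eq:Zakai:definition_X}. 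The regularity and exponential-growth bound in (ii) follow from the $C^{1, 2}$-regularity of $u_z$ and the linear growth $\|h(x)\|_{\R^k} \le C(1 + \|x\|_{\R^d})$ implied by the boundedness of $Dh$. To establish (iii) I apply It\^o's formula $\omega$-wise to $t \mapsto u_{Z}(t, x) \exp(\langle h(x), Z_t\rangle_{\R^k})$ for each $x \in \R^d$, using that $t \mapsto u_{Z}(t, x)$ is $C^1$ (hence of finite variation with vanishing quadratic covariation against $Z$); substituting the PDE for $u_Z$ and collecting terms reassembles precisely the Zakai drift $\tfrac12 \operatorname{Trace}_{\R^d}(\sigma \sigma^{T} \operatorname{Hess}(X_t)(x)) - \operatorname{div}(\mu X_t)(x)$.

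\textbf{Uniqueness and main obstacle.} For the uniqueness statement, let $\tilde X$ satisfy (i)--(iii) and set $\tilde u_t(x, \omega) = \tilde X_t(x, \omega) \exp(-\langle h(x), Z_t(\omega)\rangle_{\R^k})$. Running the product-rule computation in reverse shows that $\tilde u$ is $\omega$-wise a $C^{1, 2}$ classical solution of the same random parabolic PDE with initial datum $\varphi$. The bound in (ii) together with the linear growth of $h$ yields $|\tilde u_t(x, \omega)| \le a(\omega) \exp\bigl((c(\omega) + C \sup_{s \in [0, T]}\|Z_s(\omega)\|_{\R^k})(1 + \|x\|_{\R^d})\bigr)$, which lies well inside the standard Tychonoff-type uniqueness class for linear parabolic PDEs with bounded smooth coefficients; hence $\tilde u = u_Z$ pathwise, and $\tilde X = X$ up to indistinguishability. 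The main technical obstacle is twofold: establishing $C^{1, 2}$-regularity of $u_z$ together with polynomial-growth control of its first two spatial derivatives, with constants depending on $z$ only through $\sup_{s \in [0, T]}\|z(s)\|_{\R^k}$ (so that the It\^o application, the $\F_t$-measurability step, and the growth estimates go through), and invoking a parabolic uniqueness result broad enough to cover solutions of exponential growth in $\|x\|_{\R^d}$. No conceptual novelty is required beyond careful bookkeeping of the dependence of $u_z$ on $z$ through $z|_{[0, t]}$.
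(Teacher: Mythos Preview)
Your overall strategy---Doss--Sussmann transformation to remove the stochastic integral, Feynman--Kac for the resulting random PDE, then It\^o's formula to go back---is exactly the paper's approach. Two points, however, deserve correction.

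First, the coefficients of the random PDE are \emph{not} bounded. The drift $b_z(t,x)=\sigma\sigma^T[Dh(x)]^T z(t)-\mu(x)$ has bounded derivatives but grows linearly in $x$ (since only the derivatives of $\mu$ are assumed bounded), and the potential $B_z$ is only bounded \emph{from above} by $c(1+\|x\|_{\R^d})$; its $-\tfrac12\|h(x)\|_{\R^k}^2$ term is quadratic from below. Your appeals to ``classical Feynman--Kac regularity for linear parabolic equations with bounded smooth coefficients'' and to a ``standard Tychonoff-type uniqueness class'' therefore do not apply as stated. The paper spends most of the appendix establishing the $C^{1,2}$-regularity of $u_z$ under these weaker hypotheses: the time dependence of $b_z,B_z$ through the Brownian path $z$ is only H\"older, so the argument mollifies in $t$, proves Feynman--Kac for the smoothed problem, and passes to the limit via stability estimates for the flow $R^{z,t,x}$ and uniform exponential moment bounds. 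This is more than bookkeeping.

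Second, for uniqueness the paper does not invoke a PDE uniqueness theorem at all. Instead, given any $\tilde u\in C^{1,2}$ of exponential growth solving the random PDE, it applies It\^o's formula to $\tilde u(t-s,R^{z,t,x}_s)\exp\bigl(\int_0^s B_z(t-r,R^{z,t,x}_r)\,dr\bigr)$, localizes with the exit times $\tau_n=\inf\{s:\|R^{z,t,x}_s\|\ge n\}$, and shows the boundary contribution vanishes as $n\to\infty$ using the exponential integrability $\E[\sup_s e^{p\|R^{z,t,x}_s\|}]<\infty$ (which holds because the drift has linear growth). This stochastic verification argument is robust to the unboundedness of $b_z$ and $B_z$ and directly yields $\tilde u=u_z$. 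Your Tychonoff route could in principle be made to work, but would require a nonstandard uniqueness theorem for unbounded coefficients; the paper's choice avoids that detour entirely.
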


Representation \eqref{eq:Zakai:definition_X}
makes it possible to approximate the solution $X_t(x, \omega)$ of the Zakai equation
\eqref{Xt} along a realization of the observation process $(Z_s(\omega))_{s \in [0,t]}$
by averaging over different Monte Carlo simulations of the process 
$R^{Z(\omega), t,x}$ given in \eqref{eq:Zakai:sde_for_mc_R}. 
We provide numerical results for a Zakai equation 
of the form \eqref{Xt} for dimensions $d \in \{1, 2, 5, 10, 20, 25\}$
in \cref{sec:numerics} below. The proof of \cref{thm:main} is given in the 
Appendix. 

The idea of transforming a stochastic differential equation into an ordinary differential equation with 
random coefficients goes back to Doss \cite{doss1977liens} and Sussmann \cite{sussmann1978gap}.
An extension to SPDEs was used by Buckdahn and Ma \cite{buckdahn2001stochastic, buckdahn2001stochastic2} to introduce a notion of stochastic viscosity solution for SPDEs and show existence and uniqueness results as well as connections to backward doubly stochastic differential equations. The same approach was employed by Buckdahn and Ma \cite{buckdahn2007pathwise} and Boufoussi et al.\ \cite{boufoussi2007generalized} to study stochastic viscosity solutions of stochastic Hamilton--Jacobi--Bellman (HJB) equations. In this paper, we analyze the regularity properties of such transformations and use them to develop a Monte Carlo method for approximating solutions of Zakai equations. The numerical results in Section \ref{sec:numerics} below
show that it produces accurate results in high dimensions with fast run times.
For different numerical approximation methods for Zakai equations, see e.g., 
\cite{SPDE,Crisan03,CrisanGainesLyons98,GobetPagesPhamPrintemsMVE05,GobetPagesPhamPrintems06}.

\section{Numerical experiments} \label{sec:numerics}

Together with time-discretization, the trapezoidal rule and Monte Carlo sampling,
\cref{thm:main} can be used to approximate
the solution of a given Zakai equation of the form \eqref{eq:Zakai} 
along a realization of the observation process $Z$.
We illustrate this in the following example: Choose $T, \alpha \in (0, \infty)$, 
$\beta \in \R$, $d \in \N$, and let $\sigma \in \R^{d \times d}$ be given 
by $\sigma_{ij} = d^{- \nicefrac{1}{2}}$ for all $i, j \in \{1, \dots, d\}$. Consider 
a $d$-dimensional signal process with dynamics
\begin{equation} \label{eq:Y}
Y_t = Y_0 + \int_0^t \frac{\beta Y_s}{1 + \lVert Y_s \rVert_{\R^d}^2} \,ds 
+ \sigma W_t, \quad \quad t \in [0,T],
\end{equation}
for an ${\cal F}_0$-measurable random initial condition $Y_0 \colon \Omega \to \R^d$ with density 
\[
\varphi(x) = \bigl( \tfrac{\alpha}{2\pi} \bigr)^{\!\nicefrac{d}{2}} \exp\bigl( -\tfrac{\alpha}{2} \lVert x \rVert_{\R^d}^2 \bigr), \quad x \in \R^d, \] defined on a filtered probability space 
$(\Omega, {\cal F}, ({\cal F}_t)_{t \in [0,T]}, \P)$ satisfying the usual conditions 
and a standard $({\cal F}_t)_{t \in [0,T]}$-Brownian motion 
$W \colon [0, T] \times \Omega \to \R^d$
with continuous sample paths. Assume the observation process is of the form 
\begin{equation} \label{eq:Z}
Z_t = \int_0^t \gamma Y_s\,ds + V_t, \quad t \in [0,T],
	\end{equation} 
for a constant $\gamma \in \R$ and a standard $({\cal F}_t)$-Brownian motion 
$V \colon [0, T] \times \Omega \to \R^d$ with continuous 
sample paths independent of $W$.
Let $U \colon [0, T] \times \Omega \to \R^d$ be another standard Brownian motion 
with continuous sample paths and consider stochastic processes 
$R^{z, t, x} \colon [0, t] \times \Omega \to \R^d$, 
$z \in C([0, T], \R^d)$, $t \in [0, T]$, $x \in \R^d$ satisfying
\[ \label{def:mathcal-Y-Zakai-Original}
	R^{z, t, x}_s 
	= 
	x + \int_0^s \edg{\gamma \sigma \sigma^T z(t-r) - \frac{\beta R^{z, t, x}_r}
	{1 + \lVert R^{z, t, x}_r \rVert_{\R^d}^2}} dr 
	+ \sigma U_s 
	\]
	for all $z \in C([0, T], \R^d)$, $t \in [0, T]$, $s \in [0, t]$ and $x \in \R^d$. 
	Let the mappings $B_{z}, u_{z} \colon [0, T]\times \R^d \to \R$, 
	$z \in C([0, T], \R^d)$ be given by 
\[ \label{eq:mathfrak-B-Original}
\begin{split}
B_z(t, x) = &
\tfrac{\gamma^2}{2} \langle \sigma^T z(t) + x, \sigma^T z(t) - x\rangle_{\R^d} 
	- \beta \gamma (1+\lVert x \rVert_{\R^d}^2)^{-1} \langle x, z(t) \rangle_{\R^d} 
	\\
	& - d \beta (1+\lVert x \rVert_{\R^d}^2)^{-1}
	+ 2 \beta \lVert x \rVert_{\R^d}^2 (1+\lVert x \rVert_{\R^d}^2)^{-2}
	\end{split}
	\]
	and
	\[ \label{nonlinearRPDE-Original}
	\begin{split}
	&u_z(t,x)
	= \E\biggl[
	\varphi \Big(R^{z,t,x}_t \Big) \exp \Bigl(
	{\textstyle\int\limits_0^t} B_{z}(t-s,R^{z,t,x}_s) \,ds	
	\Bigr)
	\biggr],	
	\end{split} 
	\]
	$z \in C([0, T], \R^d)$, $t \in [0,T]$, $x \in \R^d$. By \cref{thm:main}, 
	\be \label{eq:repX}
	X_t(x, \omega) 
	= 
	u_{Z(\omega)}(t, x) \exp\bigl( \langle \gamma x, Z_t(\omega) \rangle_{\R^d} \bigr),
	\quad  t \in [0,T], \, x \in \R^d, \, \omega \in \Omega,
	\ee
is, up to indistinguishability, the unique random field 
$X \colon [0, T] \times \R^d \times \Omega \to \R$ satisfying
\eqref{it:Zakai-1}--\eqref{it:Zakai-2} of \cref{thm:main} and solving the Zakai equation
\be \label{specialZakai}	
\begin{split}	X_t(x) 
		&= \varphi(x)
		+ 
		\int_0^t X_s(x) \langle \gamma x, dZ_s \rangle_{\R^d} \\
		&
		+ 
		\int_0^t \edg{
		\tfrac12 {\textstyle\sum\nolimits_{i,j=1}^d} \tfrac{\partial^2}{\partial x_i \partial x_j} X_s(x) - 
		{\textstyle\sum\nolimits_{i=1}^d} \tfrac{\partial}{\partial x_i} \brak{
		\frac{\beta x_i X_s(x)}{
		1 + \lVert x \rVert_{\R^d}^2} }}\,ds \quad \mbox{$\P$-a.s.,}
		\end{split}
\ee
$t \in [0,T], \, x \in \R^d$, corresponding to the dynamics \eqref{eq:Y}--\eqref{eq:Z}.

We use representation \eqref{eq:repX} to approximate
$X_T(x)$ for a given realization $(z(t))_{t \in [0,T]}$ of $(Z_t)_{t \in [0,T]}$.
For numerical purposes, we generate a discrete realization of the observation process
by choosing an $N \in \mathbb{N}$ and considering 
the following discretized versions of \eqref{eq:Y}--\eqref{eq:Z}:
\[ 
\mathcal{Y}_0 \sim \mathcal{N}\brak{0, \tfrac{1}{\alpha} I_d}, \quad \mathcal{Y}_{n} 
= \mathcal{Y}_{n-1} 
+ \frac{\beta \mathcal{Y}_{n-1}}{ 
1 + \lVert \mathcal{Y}_{n-1} \rVert_{\R^d}^2} \tfrac{T}{N} + \sigma (W_{nT\slash N} - W_{(n-1)T\slash N}), 
	\]
\[
\mathcal{Z}_0 = 0, \quad	\mathcal{Z}_{n} = \mathcal{Z}_{n-1}
+ \gamma \frac{\mathcal{Y}_{n-1} + \mathcal{Y}_n}{2} 
\frac{T}{N} + (V_{nT\slash N} - V_{(n-1)T\slash N}), \; n \in \{1, \dots, N\}.
\]
Let $U^{(i)} \colon [0, T] \times \Omega \to \R^d$, $ i \in \N$, be i.i.d.\ 
standard Brownian motions independent of $\mathcal{Y}_0$, $W$, $V$, and
consider $R^{(x, i)}_n \colon \Omega \to \R^d$, $n \in \{0, 1, \ldots, N\} $, $x \in \R^d$, $i \in \N$,
given by $R^{(i, x)}_{0} = x$ and 
\be \label{Ri}
\begin{split}
R^{(x, i)}_{n} 
& = R^{(x, i)}_{n-1} + \brak{\gamma \sigma\sigma^T \mathcal{Z}_{N-n+1} 
- \frac{\beta R^{(x, i)}_{n-1}}{ 1 + \lVert R^{(x, i)}_{n-1} \rVert_{\R^d}^2}} 
\frac{T}{N} + \sigma \left(U^{(i)}_{nT\slash N} - U^{(i)}_{(n-1)T\slash N} \right)
	\end{split} 
	\ee
for $x \in \R^d$, $i \in \N$ and $n \in \{1, 2, \ldots, N\}$. Define the mappings 
$B_n, \mathcal{X}^M \colon \Omega \times \R^d \to \R$, $n \in \{0, 1, \ldots, N\}$, 
$M \in \N$, by
\[
\begin{split}
	B_n(x) = &
	\tfrac{\gamma^2}{2} \langle \sigma^{T} \mathcal{Z}_n + x, \sigma^T \mathcal{Z}_n - x\rangle_{\R^d} 
	- \beta \gamma (1 + \lVert x \rVert_{\R^d}^2 )^{-1} \langle x, \mathcal{Z}_n \rangle_{\R^d} 
	\\
	& - d \beta (1+\lVert x\rVert_{\R^d}^2)^{-1}
	+ 2\beta \lVert x \rVert_{\R^d}^2 (1+\lVert x\rVert_{\R^d}^2)^{-2}, 
	\quad n \in \{1, \dots, N\}, \, x \in \R^d,
	\end{split}
	\]
	and
\begin{align} \label{vM}
\mathcal{X}^M(x) & = \frac{1}{M} \sum_{i=1}^{M} \varphi \big( R^{(x, i)}_N \big)\\
\notag
& \quad \times \exp \Biggl(
\sum_{n=1}^{N} \frac{ T }{2 N } \Bigl[ B_{N-n}(R^{(x, i)}_n) 
+ B_{N-n+1}(R^{(x, i)}_{n-1}) \Bigr] + \langle \gamma x, \mathcal{Z}_N \rangle_{\R^d})\Biggr), 
\quad M \in \N, \; x \in \R^d.
\end{align}
It follows from the law of large numbers that, for $M \to \infty$,
	\[ \label{eq:numerics:approximation}
	\mathcal{X}^M(x) 
	\stackrel{\mbox{$\P$-a.s.}}{\longrightarrow} \; \E[ \mathcal{X}^1(x) \mid \mathcal{Z}],
	\]
which approximates $X_T(x)$.

\cref{table:1} below shows point estimates and $95\%$ confidence intervals
for $\E[ \mathcal{X}^1(x) \mid \mathcal{Z}]$ for different realizations
of $(\mathcal{Y}, \mathcal{Z})$, $\alpha=2\pi$, $\beta= \nicefrac{1}{4}$, $\gamma= 1$, 
$T=\nicefrac{1}{2}$, $N = 100$ and $x \in \{\mathcal{Y}_N, 2 \mathcal{Z}_N\}$.
For every $d \in \{1, 2, 5, 10, 20, 25\}$ we simulated five realizations of 
$({\cal Y}, {\cal Z})$ and computed estimates of
$\E[ \mathcal{X}^1(x) \mid \mathcal{Z}]$ for $x \in \{\mathcal{Y}_N , 2 \mathcal{Z}_N\}$
by computing realizations of $\mathcal{X}^{M}(x)$, $x \in \{\mathcal{Y}_N, 2 \mathcal{Z}_N \}$, 
for $M = $ 4,096,000.  Note that in a typical application,
the signal $\mathcal{Y}_N$ is not directly observable, while, in view of \eqref{eq:Z},
$2 \mathcal{Z}_N = \mathcal{Z}_N /(\gamma T)$ is a naive estimate of $\mathcal{Y}_N$ 
based on the observation process $\mathcal{Z}$. As expected, with a few 
exceptions, the values ${\cal X}^M(x)$ reported in Table \ref{table:1} are higher 
for $x = {\cal Y}_N$ than for $x = 2 {\cal Z}_N$.
The 95\% confidence intervals were approximated,
using the central limit theorem, with
\[
\left[\mathcal{X}^{M}(x) - \frac{s_M(x)}{\sqrt{M}} \, q_{0.975} \, ,\, \mathcal{X}^{M}(x) 
+ \frac{s_M(x)}{\sqrt{M}} \, q_{0.975} \right], 
\]
where $q_{0.975}$ is the $97.5 \%$-quantile of the standard normal distribution and 
$s^2_M(x)$ the sample variance of \eqref{vM} given by 
\[
\begin{split}
s^2_M(x) & = \frac{1}{M-1} \sum_{i=1}^{M} \bigg\{ \varphi \left( R^{(x, i)}_N \right)\\
& \qquad \times 
\exp \left( \sum_{n=1}^{N} \frac{ T }{2 N } \left[ B_{N-n} \left(R^{(x, i)}_n \right) 
	+ B_{N-n+1} \left(R^{(x, i)}_{n-1} \right) \right] + \langle \gamma x, \mathcal{Z}_N \rangle_{\R^d})
	\right) - \mathcal{X}^M(x) \bigg\}^2.
\end{split}
\]
The reported runtimes are averages of the ten times needed to compute
$\mathcal{X}^M(x)$, $x \in \{\mathcal{Y}_N, 2 \mathcal{Z}_N \}$, for five 
different realizations of $({\cal Y}, {\cal Z})$.
\begin{table}
{\small 
\begin{center}
\begin{tabular}{|c|c|c|c|c|c|c|c|c|}
\hline\\[-2.8mm]	
$ d $ & $X_{\nicefrac{1}{2}}({\cal Y}_N)$ & $95\%$ CI for $X_{\nicefrac{1}{2}}({\cal Y}_N)$ & $X_{\nicefrac{1}{2}}(2{\cal Z}_N)$ & $95\%$ CI for $X_{\nicefrac{1}{2}}(2{\cal Z}_N)$ & 
\makecell{Avg.\\ run\\ time}\\[1mm]
\hline
 & 0.6238245 & [0.6230741, 0.6245749] & 0.0000552 & [0.0000471, 0.0000632] &\\ 
 & 0.4336298 & [0.4333329, 0.4339267] & 0.2250949 & [0.2247668, 0.2254231] &\\ 
1 & 0.1863724 & [0.1851538, 0.1875910] & 0.0000330 & [0.0000271, 0.0000390] & 8.1s \\ 
 & 0.4666271 & [0.4662590, 0.4669951] & 0.3562083 & [0.3558604, 0.3565562] &\\ 
 & 0.3245020 & [0.3241600, 0.3248440] & 0.4065303 & [0.4062038, 0.4068568] &\\
 \hline 
 & 0.1076552 & [0.1075091, 0.1078014] & 0.1004743 & [0.1003647, 0.1005839] &\\ 
 & 0.3076132 & [0.3073077, 0.3079187] & 0.0656050 & [0.0654037, 0.0658064] &\\ 
2 & 0.0251895 & [0.0251477, 0.0252314] & 0.0532161 & [0.0531611, 0.0532710] & 8.4s \\ 
 & 0.1698629 & [0.1696636, 0.1700622] & 0.0000018 & [0.0000018, 0.0000018] &\\ 
 & 0.2222962 & [0.2220877, 0.2225048] & 0.2758624 & [0.2755767, 0.2761480] & \\ 
 \hline
 & 0.0365342 & [0.0364497, 0.0366187] & 0.0000000 & [0.0000000, 0.0000000] &\\ 
 & 0.0450055 & [0.0449406, 0.0450704] & 0.0000366 & [0.0000366, 0.0000367] &\\ 
5 & 0.0128435 & [0.0128251, 0.0128619] & 0.0000000 & [0.0000000, 0.0000000] & 9.4s \\ 
 & 0.2135126 & [0.2121792, 0.2148459] & 0.0000000 & [0.0000000, 0.0000000] &\\ 
 & 0.0581450 & [0.0580636, 0.0582264] & 0.0000001 & [0.0000001, 0.0000001] & \\ 
 \hline
 & 0.0071417 & [0.0071233, 0.0071601] & 0.0000000 & [0.0000000, 0.0000000] & \\ 
 & 0.0053026 & [0.0052896, 0.0053156] & 0.0000000 & [0.0000000, 0.0000000] &\\ 
10 & 0.0009165 & [0.0009142, 0.0009189] & 0.0000000 & [0.0000000, 0.0000000] & 9.3s \\ 
 & 0.0006419 & [0.0006407, 0.0006430] & 0.0000000 & [0.0000000, 0.0000000] &\\ 
 & 0.0043062 & [0.0042948, 0.0043176] & 0.0000000 & [0.0000000, 0.0000000] &\\ 
 \hline
 & 0.0000355 & [0.0000355, 0.0000356] & 0.0000000 & [0.0000000, 0.0000000] &\\ 
 & 0.0001140 & [0.0001137, 0.0001142] & 0.0000000 & [0.0000000, 0.0000000] &\\ 
20 & 0.0004798 & [0.0004777, 0.0004819] & 0.0000000 & [0.0000000, 0.0000000] &9.2s \\ 
 & 0.0049785 & [0.0049678, 0.0049893] & 0.0000000 & [0.0000000, 0.0000000] &\\ 
 & 0.0018053 & [0.0017985, 0.0018122] & 0.0000000 & [0.0000000, 0.0000000] &\\ 
 \hline
 & 0.0000179 & [0.0000177, 0.0000182] & 0.0000000 & [0.0000000, 0.0000000] &\\ 
 & 0.0000457 & [0.0000452, 0.0000461] & 0.0000000 & [0.0000000, 0.0000000] &\\ 
25 & 0.0000011 & [0.0000011, 0.0000011] & 0.0000000 & [0.0000000, 0.0000000] & 9.8s \\ 
 & 0.0000047 & [0.0000047, 0.0000048] & 0.0000000 & [0.0000000, 0.0000000] & \\ 
 & 0.0001012 & [0.0000992, 0.0001032] & 0.0000000 & [0.0000000, 0.0000000] & \\ 
 \hline
\end{tabular}
		\caption{Estimates of $X_{\nicefrac{1}{2}}(\mathcal{Y}_N)$ and 
		$X_{\nicefrac{1}{2}}(2 \mathcal{Z}_N)$ together with 95\% confidence intervals
		for the Zakai equation \eqref{specialZakai} for five different realizations of 
		$(\mathcal{Y}, \mathcal{Z})$ in each of the cases $d \in \{1, 2, 5, 10, 20, 25\}$}
		\label{table:1}
	\end{center}}
\end{table}

To approximate the whole function $x \mapsto X_T(x)$, our algorithm can be 
run simultaneously for different $x \in \R^d$. To produce the plots in Figures
\ref{fig:1}--\ref{fig:3}, we divided the time interval into $N = 20$ subintervals 
and computed $\mathcal{X}^{M}(x)$ for a given realization of $({\cal Y}, {\cal Z})$
and different $x \in \R^d$ based on $M = $ 102,400 independent copies of \eqref{Ri},
which we generated simultaneously for different $x \in \R^d$ using 
the same simulated Brownian increments.
The first plot in Figure \ref{fig:1} shows ${\cal X}^M(x)$ for
$x$ on a regular grid with 1024 grid points in the interval 
$[\mathcal{Y}_N-5, \mathcal{Y}_N + 5]$, while
the second plot in Figure \ref{fig:1} shows ${\cal X}^M(x)$ for
$x$ on a regular grid with $128^2$ grid points in the square 
$[\mathcal{Y}_{N,1}-5, \mathcal{Y}_{N,1} + 5] 
\times [\mathcal{Y}_{N,2} -5, \mathcal{Y}_{N,2} + 5]$.
Figures \ref{fig:2}--\ref{fig:3} show ${\cal X}^M(x, \mathcal{Y}_{N,2}, \dots, \mathcal{Y}_{N,d})$ 
for $x$ on a regular grid with 1024 grid points in the interval 
$[\mathcal{Y}_{N,1}-5, \mathcal{Y}_{N,1} + 5]$
for $d \in \{5, 10, 20, 25\}$.
The computation times for the results depicted in Figures 
\ref{fig:1}--\ref{fig:3} were 1.1s, 28.4s, 7.4s, 14.5s, 28.1s,
34.1s, respectively.

The numerical experiments presented in this section were implemented in Python using TensorFlow on a NVIDIA GeForce RTX 2080 Ti GPU. The underlying system was an AMD Ryzen 9 3950X CPU with 64 GB DDR4 memory running Tensorflow 2.1 on Ubuntu 19.10. The Python source codes  
can be found in the GitHub repository \url{https://github.com/seb-becker/zakai}.

\begin{figure} 
\includegraphics[width=0.45\textwidth, trim=0mm 0mm 0mm 0mm, clip]{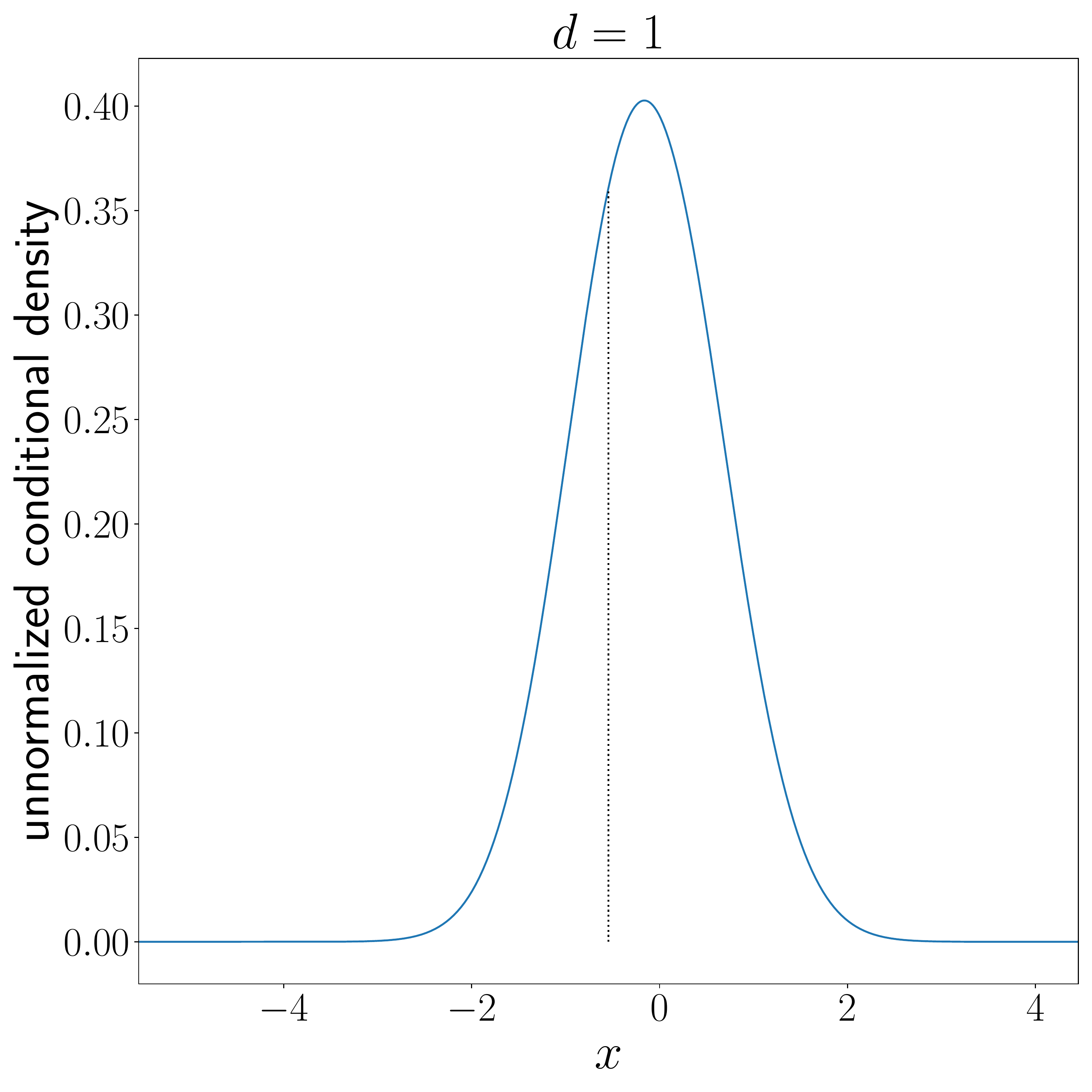}
\qquad
\includegraphics[width=0.45\textwidth, height=0.36\textheight, trim=10mm 20mm 0mm 0mm, clip]{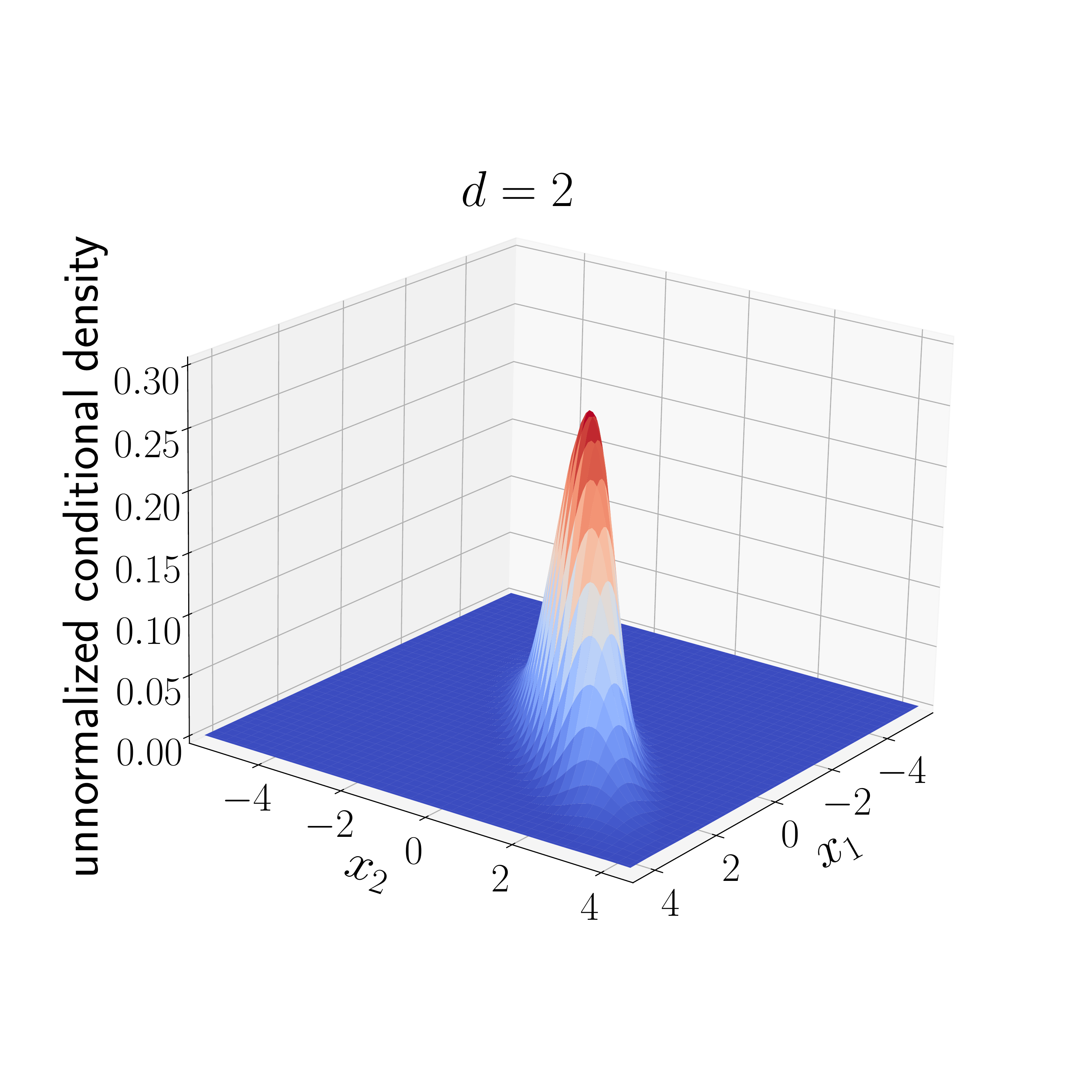}
\caption{Approximations of $X_{\nicefrac{1}{2}}(x)$ for $x \in [\mathcal{Y}_N-5, \mathcal{Y}_N + 5]$
(left) and $x \in [\mathcal{Y}_{N,1}-5, \mathcal{Y}_{N,1} + 5] \times 
[\mathcal{Y}_{N,2} -5, \mathcal{Y}_{n,2} + 5]$ (right). The vertical line in the 
picture at the left shows the location of ${\cal Y}_N$.
}
\label{fig:1}
\end{figure}

\begin{figure} 
\includegraphics[width=0.45\textwidth, trim=0mm 0mm 0mm 0mm, clip]{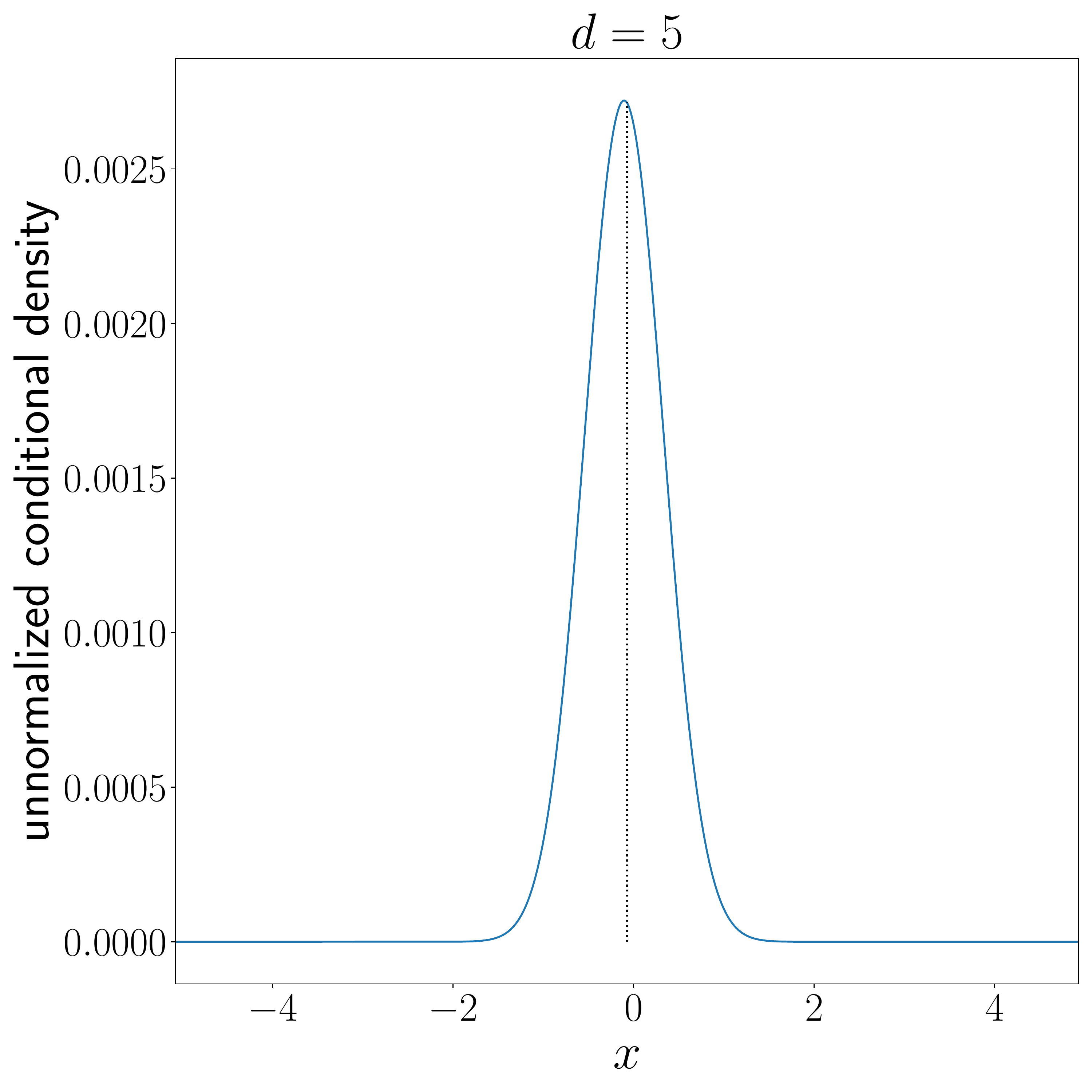}
\qquad
\includegraphics[width=0.45\textwidth, trim=0mm 0mm 0mm 0mm, clip]{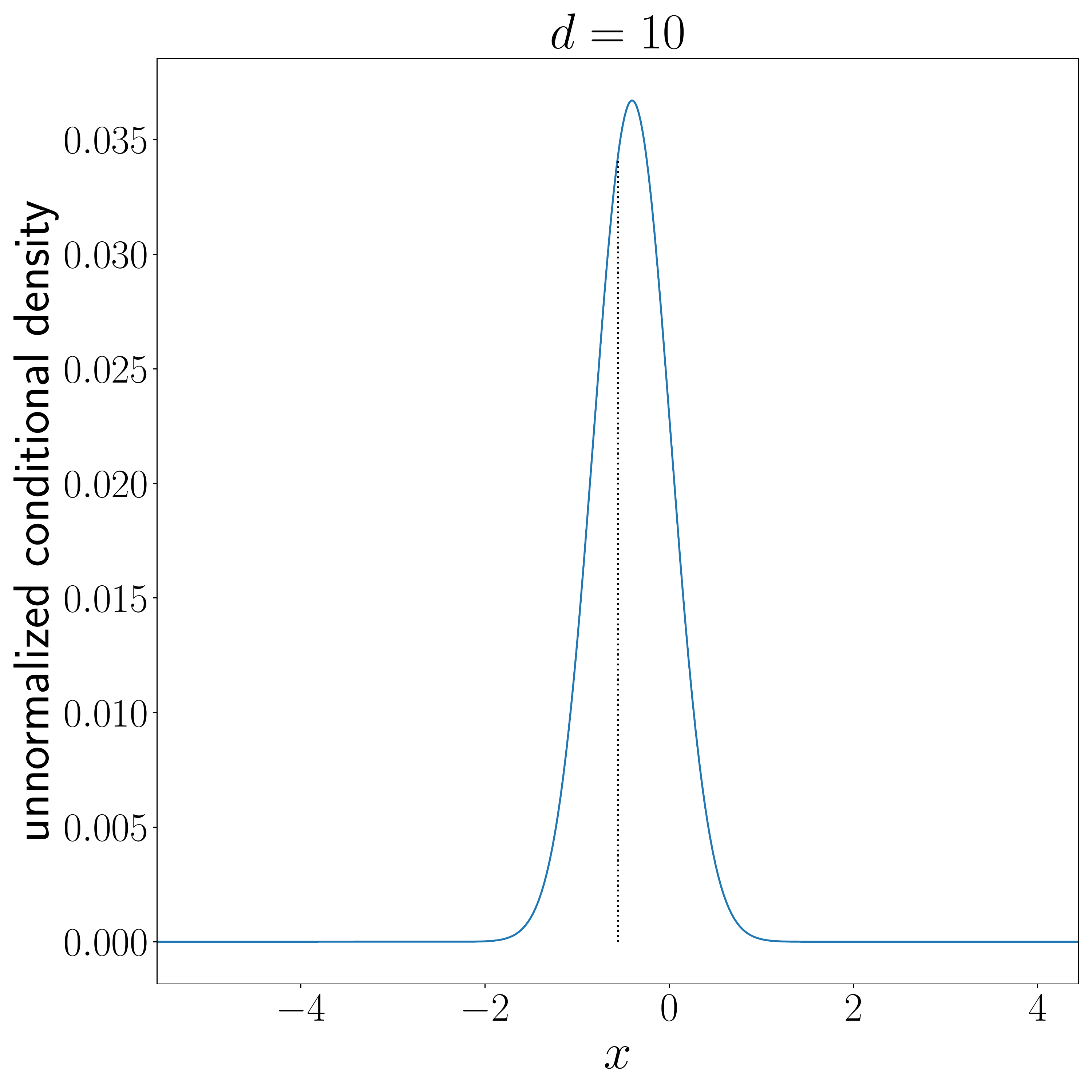}
\caption{Approximations of 
$X_{\nicefrac{1}{2}}(x, {\cal Y}_{N,2} \dots, {\cal Y}_{N,d})$, 
$x \in [{\cal Y}_{N,1}-5, {\cal Y}_{N,1} + 5]$, for $d \in \{5, 10\}$.
The vertical lines show the location of ${\cal Y}_{N,1}$.}
\label{fig:2}
\end{figure}

\begin{figure} 
\includegraphics[width=0.45\textwidth, trim=0mm 0mm 0mm 0mm, clip]{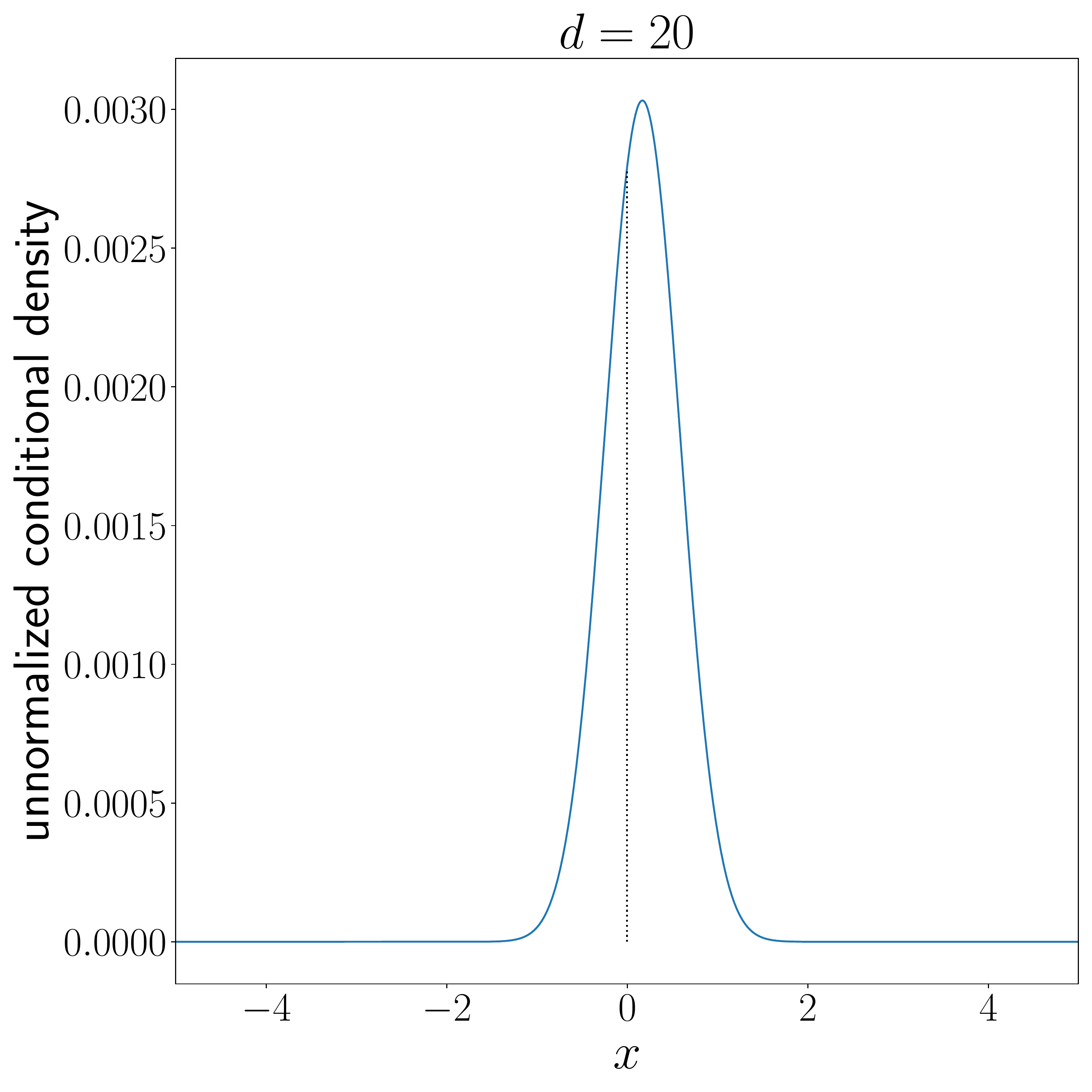}
\qquad
\includegraphics[width=0.45\textwidth, trim=0mm 0mm 0mm 0mm, clip]{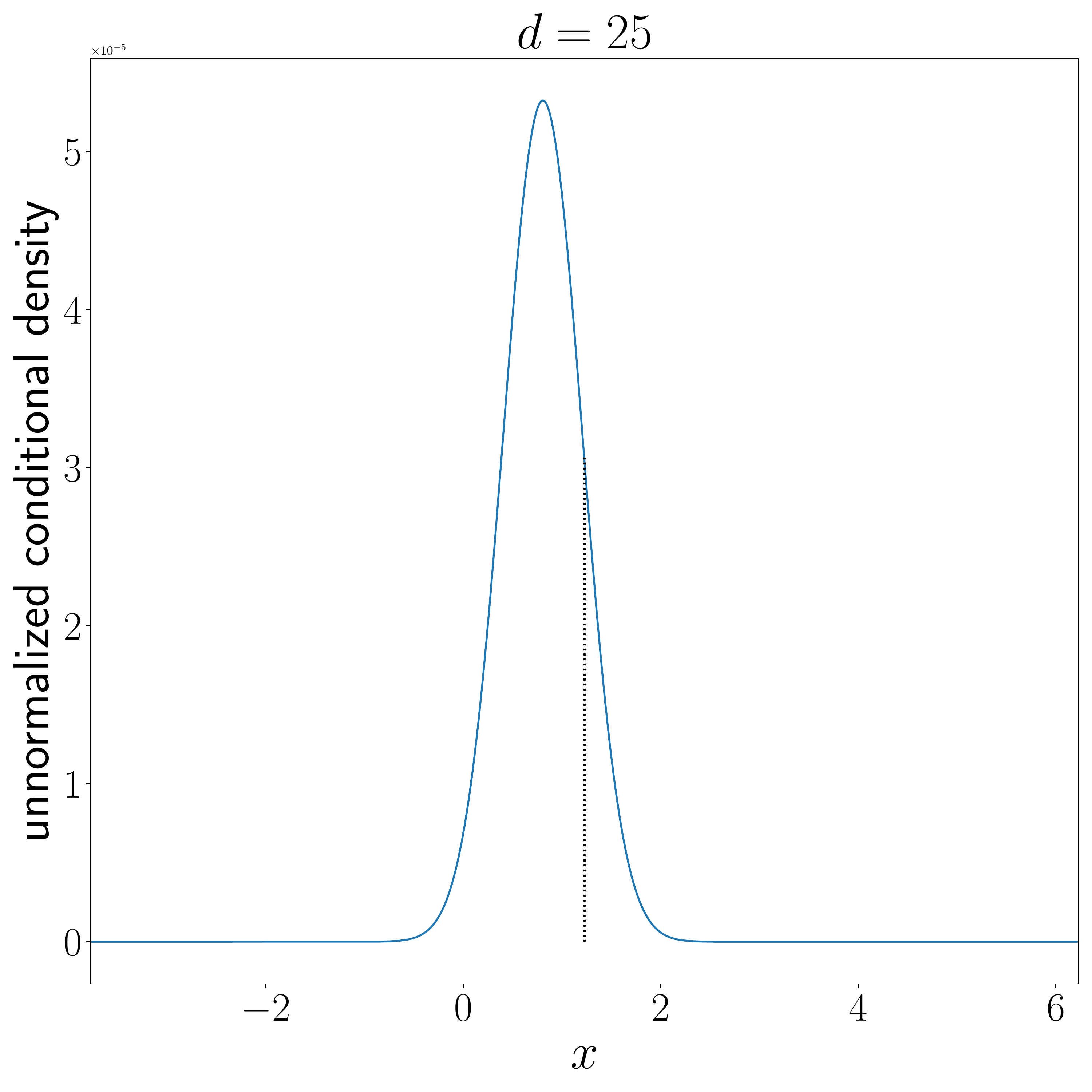}
\caption{Approximations of 
$X_{\nicefrac{1}{2}}(x, {\cal Y}_{N,2} \dots, {\cal Y}_{N,d})$, 
$x \in [{\cal Y}_{N,1}-5, {\cal Y}_{N,1} + 5]$, for $d \in \{20, 25\}$.
The vertical lines show the location of ${\cal Y}_{N,1}$.}
		\label{fig:3}
\end{figure}

\section{Conclusion}

In this paper we have introduced a Monte Carlo method for approximating 
solutions of certain Zakai equations in high dimensions. It is based on a 
Doss--Sussmann-type transformation which 
transforms a Zakai SPDE into a PDE with random coefficients.
This makes it possible to apply the Feynman--Kac formula to obtain a 
Monte Carlo approximation of the solution of a given Zakai equation.
The numerical experiments in \cref{sec:numerics} show that the proposed 
method achieves good results in up to 25 dimensions with fast run times.

\section*{Acknowledgements}
A.J. gratefully acknowledges the Cluster of Excellence EXC 2044-390685587, 
Mathematics Münster: Dynamics-Geometry-Structure funded by the 
Deutsche Forschungsgemeinschaft (DFG, German Research Foundation).
A.N. acknowledges funding by the Nanyang Assistant Professorship Grant 
(NAP Grant) Machine Learning Based Algorithms in Finance and Insurance.

\begin{appendix}
\section*{Appendix. Proof of the main result}
\label{app}

\setcounter{section}{1} 

In this appendix we derive approximation, stability, integrability as well as
regularity results and use them to prove Theorem \ref{thm:main}.

\subsection{Approximation and mollification results for at most polynomially growing functions} \label{subsec:approximation_and_mollification}

\begin{lemma} \label{le:Mollification}
	Let $c,p \in [0, \infty)$, 
	$d \in \N$, 
	$\alpha,T \in (0, \infty)$, and consider at most polynomially growing functions
	$G \in C([0, T]\times\R^d, \R)$ and $H \in C(\R^d, \R)$.
	Moreover, assume that 
	\be \label{eq:Mollification:hoelder-assumption}
	|G(t, x) - G(s, x) | 
	\leq 
	c \, ( 1 + \Vert x\Vert_{\R^d} )^p \, | t - s |^{\alpha} 
	\quad \mbox{for all $s,t \in [0, T]$ and $x \in \R^d$,}
	\ee
	and let $G_n \in C([0, T]\times\R^d, \R)$ and
	$H_n \in C(\R^d, \R)$ for all $n \in \N$, 
	$t \in [0, T]$ and $x \in \R^d$ be given by
	\[
	G_n(t, x) = \left(\tfrac{n}{2\pi}\right)^{\nicefrac{1}{2}} \int_{-\infty}^{\infty} 
	G(\min\{T, \max\{s, 0\} \}, x) \exp\!\big( - \tfrac{n}{2}(t-s)^2\big) \,ds
	\]
	and 
	\[
	H_n(x) = 
	\big(\tfrac{n}{{2\pi}}\big)^{\nicefrac{d}{2}}
	\int_{\R^d}
	H(y) 
	\exp\!\big(-\tfrac{n}{2} \|x - y \|_{\R^d}^2 \big)
	\,dy.
	\]
	Then 
	\begin{enumerate}[{\rm (i)}]
		\item \label{it:Mollifier:convergence-Bn_B} 
		$ \limsup\nolimits_{n \to \infty} \sup_{t \in [0, T]} \sup_{x \in \R^d} \frac{ |G_n(t, x) - G(t, x) | }{ ( 1 + \Vert x \Vert_{\R^d} )^p } = 0 $ 
		and 
		\item \label{it:Mollifier:convergence-Gn-G} 
		$ \limsup\nolimits_{n \to \infty} \sup_{ x \in [-q,q]^d} |H_n(x) - H(x) | = 0 $
		for all $q \in (0, \infty)$.
	\end{enumerate}
\end{lemma}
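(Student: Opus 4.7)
The plan is to prove each part via a standard mollifier argument, taking care in (i) of the time-clamping built into the definition of $G_n$ and in (ii) of the polynomial growth of $H$.

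For \eqref{it:Mollifier:convergence-Bn_B}, write $\pi_T(s) = \min\{T,\max\{s,0\}\}$. Since $(n/(2\pi))^{1/2}\int_\R e^{-n(t-s)^2/2}\,ds = 1$, I would first absorb $G(t,x)$ into the integral to obtain, for every $t \in [0,T]$ and $x \in \R^d$,
\[
G_n(t,x) - G(t,x) = (n/(2\pi))^{1/2}\int_\R \bigl[G(\pi_T(s),x) - G(t,x)\bigr]\,e^{-n(t-s)^2/2}\,ds.
\]
The key observation is that $\pi_T$ is $1$-Lipschitz with $\pi_T(t)=t$ for $t\in[0,T]$, and $\pi_T(s)\in[0,T]$ for every $s\in\R$. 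Hence the Hölder-in-time assumption \eqref{eq:Mollification:hoelder-assumption} applies and yields $|G(\pi_T(s),x) - G(t,x)| \le c(1+\|x\|_{\R^d})^p |s-t|^{\alpha}$. Substituting $u=\sqrt{n}(s-t)$ then gives
\[
\sup_{t\in[0,T]}\sup_{x\in\R^d}\frac{|G_n(t,x)-G(t,x)|}{(1+\|x\|_{\R^d})^p}\;\le\;c\, n^{-\alpha/2}\,(2\pi)^{-1/2}\!\int_\R |u|^{\alpha}e^{-u^2/2}\,du,
\]
which tends to $0$ as $n\to\infty$ because the Gaussian moment on the right is a finite constant.

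For \eqref{it:Mollifier:convergence-Gn-G}, after the substitution $z=\sqrt{n}(y-x)$, I would write
\[
H_n(x)-H(x) = (2\pi)^{-d/2}\int_{\R^d}\bigl[H(x+n^{-1/2}z)-H(x)\bigr]\,e^{-\|z\|_{\R^d}^2/2}\,dz.
\]
Fix $q\in(0,\infty)$ and $\varepsilon>0$. Since $H$ is continuous, it is uniformly continuous on $[-q-1,q+1]^d$, so there exists $\delta>0$ with $|H(y)-H(x)|\le\varepsilon$ whenever $x\in[-q,q]^d$ and $\|y-x\|_{\R^d}\le\delta$. Choose $R\in(0,\infty)$ so that the Gaussian tail $(2\pi)^{-d/2}\int_{\|z\|>R}(1+\|z\|_{\R^d}^p)e^{-\|z\|_{\R^d}^2/2}\,dz$ is at most $\varepsilon$, where $p$ denotes a polynomial-growth exponent for $H$. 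Then for $n$ large enough that $R/\sqrt{n}\le\delta$, the contribution to the integral from $\|z\|\le R$ is at most $\varepsilon$ uniformly in $x\in[-q,q]^d$, and the contribution from $\|z\|>R$ is controlled by the polynomial-growth bound $|H(x+n^{-1/2}z)|+|H(x)|\le K'(1+\|z\|_{\R^d}^p)$, valid uniformly for $x\in[-q,q]^d$ and $n\ge 1$, combined with the tail estimate.

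There is no real obstacle here; the only point requiring care is ensuring that the polynomial-growth bound in part (ii) is uniform in $n\ge 1$ and $x\in[-q,q]^d$, which follows from $\|x+n^{-1/2}z\|_{\R^d}\le q\sqrt{d}+\|z\|_{\R^d}$ and the inequality $(a+b)^p\le 2^p(a^p+b^p)$. In part (i) the mild subtlety is that the mollification is defined on all of $\R$ via the clamping $\pi_T$; the $1$-Lipschitz property of $\pi_T$ lets one pass to the same $n^{-\alpha/2}$-rate one would obtain without clamping.
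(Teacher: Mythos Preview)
Your argument is correct. For part \eqref{it:Mollifier:convergence-Bn_B} it is essentially identical to the paper's: both absorb $G(t,x)$ into the integral, invoke the H\"older bound via $|\pi_T(s)-t|\le|s-t|$, and change variables to extract the factor $n^{-\alpha/2}$.

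For part \eqref{it:Mollifier:convergence-Gn-G} you take a genuinely different route. The paper, after the same change of variables, appeals to the de la Vall\'ee Poussin theorem and the Vitali convergence theorem: it verifies a uniform higher-moment bound $\sup_n \int_{\R^d}\sup_{x\in[-q,q]^d}|H(x+z/\sqrt{n})-H(x)|^{q}\,e^{-\|z\|^2/2}\,dz<\infty$ to get uniform integrability with respect to the Gaussian measure, and then uses pointwise convergence (from continuity of $H$) to pass to the limit. Your direct $\varepsilon$--$R$ splitting---uniform continuity on a slightly enlarged cube for the inner region, polynomial growth plus a Gaussian tail bound for the outer region---is more elementary and self-contained, avoiding the two abstract theorems entirely. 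The paper's approach is shorter to write once those theorems are on hand and fits the style used repeatedly elsewhere in the appendix; yours makes the mechanism (small increments near the origin, integrable tails far out) fully explicit and would be easier to follow for a reader unfamiliar with Vitali-type arguments.
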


\begin{proof}[Proof]
	From \eqref{eq:Mollification:hoelder-assumption} and the fact that
	$|\!\min\{T, \max\{s, 0 \} \} - t| \leq |s-t|$ for all $s \in \R$ and $t \in [0, T]$, we obtain
	\[
	\begin{split} 
	\frac{ |G_n(t, x) - G(t, x) | }{ ( 1 + \Vert x \Vert_{\R^d} )^p } 
	& \leq 
	\big( \tfrac{n}{2\pi} \big)^{\!\nicefrac12} 
	\int_{-\infty}^{\infty} 
	\frac{ | G(\min\{T, \max\{s, 0 \} \}, x) - G(t, x) | }{ ( 1 + \Vert x \Vert_{\R^d} )^p } 
	\exp( -\tfrac{n(t-s)^2}{2} )\,ds 
	\\
	& \leq 
	\big( \tfrac{n}{2\pi} \big)^{\!\nicefrac12} 
	\int_{-\infty}^{\infty} c\left| \min\{T, \max\{s, 0\} \} - t\right|^{\alpha} \exp( -\tfrac{n(t-s)^2}{2} )\,ds 
	\\
	& \leq \big( \tfrac{n}{2\pi} \big)^{\!\nicefrac12} 
	\int_{-\infty}^{\infty} c  \left| s - t \right|^{\alpha} \exp( -\tfrac{n(t-s)^2}{2} )\,ds 
	= c \,
	\big( \tfrac{1}{2\pi n^{\alpha}} \big)^{\!\nicefrac12}
	\int_{-\infty}^{\infty} |z|^{\alpha} \exp(-\tfrac{z^2}{2})\,dz
	\end{split}
	\]
	for all $t \in [0,T]$ and $x \in \R^d$. In particular, 
	$ \limsup\nolimits_{n \to \infty} 
	\sup_{t \in [0, T]} \sup_{x \in \R^d} \frac{ | G_n(t, x) - G(t, x) | }{ ( 1 
	+ \Vert x \Vert_{\R^d} )^p } = 0 $, which shows \eqref{it:Mollifier:convergence-Bn_B}. 
Next, note that one has 
	\begin{align}
	\label{eq:G-n-to-G-molli}
	|H_n(x)- H(x)| 
	& \leq
	\big(\tfrac{n}{2\pi}\big)^{\nicefrac{d}{2}}
	\int_{\R^d}
	|H(y)-H(x)|
	\exp( -\tfrac{n}{2} \|x - y \|_{\R^d}^2 )
	\,dy
	\\ \notag
	& =
	\big(\tfrac{1}{2\pi}\big)^{\nicefrac{d}{2}}
	\int_{\R^d} 
	\big|H\big(x+\tfrac{z}{\sqrt{n}}\big)-H\big(x\big)\big|
	\exp( -\tfrac{1}{2} \|z \|_{\R^d}^2 )
	\,dz \quad \mbox{for all $n \in \N$ and $x \in \R^d$,}
	\end{align}
	and the assumption that $H \in C(\R^d, \R)$ is at most polynomially growing 
	implies that 
	\begin{equation} \label{eq:Gcont}
	\sup\nolimits_{n \in \N} \int_{\R^d} 
	\sup\nolimits_{x\in [-q,q]^d} 
	\big|H\big(x+\tfrac{z}{\sqrt{n}}\big)- H\big(x\big)\big|^q
	\exp(-\tfrac{1}{2} \|z \|_{\R^d}^2)
	\,dz<\infty \quad \mbox{for all $q \in (0,\infty)$.}
	\end{equation}
	Combining \eqref{eq:G-n-to-G-molli}, \eqref{eq:Gcont}, the assumption 
	that $H \in C(\R^d, \R)$, the de la Vall\'ee Poussin theorem (cf., e.g., \cite[Corollary~6.21]{Klenke_2014}),
	and the Vitali convergence theorem (cf., e.g., \cite[Theorem~6.25]{Klenke_2014}) 
	yields that 
	$\limsup_{n \to \infty} \sup_{ x \in [-q,q]^d} |H_n(x) - H(x) |  = 0$
	for all $q \in (0, \infty)$.
	This establishes \eqref{it:Mollifier:convergence-Gn-G} and completes the proof of the lemma.
\end{proof}

\begin{lemma} \label{le:loc-unif-conv}
	Let $d, m \in \N$, $T \in (0, \infty)$, and consider two families of functions
	$f^{n,t,x} \in C([0,t], \R^m)$ and $g_n \in C(\R^m, \R)$, 
	$n \in \N_0$, $t \in [0, T]$, $x \in \R^d$, such that for all $q \in (0, \infty)$,
	\bea \label{eq:f0}
	&& \sup_{t \in [0, T]} 
	\sup_{s \in [0, t]} 
	\sup_{x \in [-q, q]^d}
	\lVert f^{0, t, x}(s) \rVert_{\R^m} 
	< \infty, \\
	\label{eq:loc-unif-conv-Y_n-Y_0}
	&& \limsup\nolimits_{n \to \infty} 
	\sup\nolimits_{t \in [0,T]} 
	\sup\nolimits_{s \in [0, t]} 
	\sup\nolimits_{x \in [-q, q]^d} 
	\Vert f^{n,t,x}(s) - f^{0,t,x}(s) \Vert_{\R^m} = 0 \mbox{ and}\\
	 \label{eq:loc-unif-conv-f_n-f_0}
	&& \limsup\nolimits_{n \to \infty} 
	\sup\nolimits_{y \in [-q, q]^m} 
	| g_n(y) - g_0(y) | = 0. 
	\eea
	Then, one has for all $q \in (0, \infty)$,
	\[
	\limsup\nolimits_{n \to \infty} 
	\sup\nolimits_{t \in [0, T]} 
	\sup\nolimits_{s \in [0, t]}
	\sup\nolimits_{x \in [-q, q]^d} 
	| g_n(f^{n,t,x}(s)) - g_0(f^{0,t,x}(x)) | = 0.
	\]
\end{lemma}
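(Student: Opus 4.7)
The plan is a standard triangle inequality argument, with the only subtlety being to first confine all the relevant arguments of $g_n$ and $g_0$ to a common compact box. Fix $q \in (0, \infty)$. By the triangle inequality, for every $n \in \N$, $t \in [0, T]$, $s \in [0, t]$ and $x \in [-q, q]^d$,
\[
| g_n(f^{n,t,x}(s)) - g_0(f^{0,t,x}(s)) |
\leq
| g_n(f^{n,t,x}(s)) - g_0(f^{n,t,x}(s)) |
+ | g_0(f^{n,t,x}(s)) - g_0(f^{0,t,x}(s)) |,
\]
so it suffices to show both terms on the right-hand side converge uniformly (in $t$, $s$ and $x \in [-q,q]^d$) to $0$ as $n \to \infty$.

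First I would produce a single compact set $K \subseteq \R^m$ containing $f^{n,t,x}(s)$ for all sufficiently large $n$ and all $(t, s, x)$ in the relevant range. By \eqref{eq:f0}, there exists $M \in (0, \infty)$ with $\|f^{0,t,x}(s)\|_{\R^m} \leq M$ for all $t \in [0, T]$, $s \in [0, t]$ and $x \in [-q, q]^d$. By \eqref{eq:loc-unif-conv-Y_n-Y_0}, there exists $N_0 \in \N$ such that $\|f^{n,t,x}(s) - f^{0,t,x}(s)\|_{\R^m} \leq 1$ for all $n \geq N_0$, $t \in [0, T]$, $s \in [0, t]$, $x \in [-q, q]^d$. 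Hence $\|f^{n,t,x}(s)\|_{\R^m} \leq M + 1$ for all such $(n, t, s, x)$, so all these points lie in the compact box $K = [-(M+1), M+1]^m$.

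For the first term, applying \eqref{eq:loc-unif-conv-f_n-f_0} with the constant $Q = M + 1$ gives $\sup_{y \in K} |g_n(y) - g_0(y)| \to 0$ as $n \to \infty$, and combined with $f^{n,t,x}(s) \in K$ for $n \geq N_0$ this yields the desired uniform convergence of the first term. For the second term, the continuity of $g_0$ on $\R^m$ implies that $g_0$ is uniformly continuous on the compact set $K$; hence, for every $\varepsilon > 0$, there exists $\delta > 0$ with $|g_0(y) - g_0(y')| \leq \varepsilon$ whenever $y, y' \in K$ satisfy $\|y - y'\|_{\R^m} \leq \delta$. By \eqref{eq:loc-unif-conv-Y_n-Y_0}, there exists $N_1 \geq N_0$ with $\|f^{n,t,x}(s) - f^{0,t,x}(s)\|_{\R^m} \leq \delta$ for all $n \geq N_1$ and all $(t, s, x)$ as above, and since both $f^{n,t,x}(s)$ and $f^{0,t,x}(s)$ lie in $K$, we conclude $|g_0(f^{n,t,x}(s)) - g_0(f^{0,t,x}(s))| \leq \varepsilon$ uniformly in $(t, s, x)$. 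Letting $\varepsilon \downarrow 0$ completes the proof.

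The only nontrivial step is the uniform bound confining the $f^{n,t,x}(s)$ to a common compact set, which is needed both to apply the uniform convergence $g_n \to g_0$ from \eqref{eq:loc-unif-conv-f_n-f_0} (which is only given on compacts) and to upgrade continuity of $g_0$ to uniform continuity; once that is in place, the rest is routine.
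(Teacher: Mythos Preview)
Your proof is correct and follows essentially the same approach as the paper: both confine the arguments $f^{n,t,x}(s)$ to a common compact box using \eqref{eq:f0} and \eqref{eq:loc-unif-conv-Y_n-Y_0}, then split via the triangle inequality into a $|g_n - g_0|$ term (handled by \eqref{eq:loc-unif-conv-f_n-f_0} on that box) and a $|g_0(f^n) - g_0(f^0)|$ term (handled by uniform continuity of $g_0$ on the box together with \eqref{eq:loc-unif-conv-Y_n-Y_0}). Your write-up is in fact slightly more explicit than the paper's, which compresses the uniform-continuity step into a one-line appeal to continuity of $g_0$.
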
 

\begin{proof}[Proof]
It follows from \eqref{eq:f0} and \eqref{eq:loc-unif-conv-Y_n-Y_0} that there exist 
$n_q, N_q \in \N$, $q \in (0, \infty)$ such that 
	\begin{equation} \label{eq:loc-unif-conv-unif-bound-Y_n}
	\sup\nolimits_{n \in \{0\}\cup(\N\cap [n_q, \infty))} 
	\sup\nolimits_{t \in [0, T]} 
	\sup\nolimits_{s \in [0, t]}
	\sup\nolimits_{x \in [-q, q]^d} \lVert f^{n, t, x}(s) \rVert_{\R^m} 
	\leq N_q < \infty 
	\end{equation} 
for all $q \in (0,\infty)$.
So one obtains from the assumption that $g_0 \in C(\R^m, \R)$
and \eqref{eq:loc-unif-conv-Y_n-Y_0} that	
\[
	\limsup\nolimits_{n \to \infty} 
	\sup\nolimits_{t \in [0, T]} 
	\sup\nolimits_{s \in [0, t]}
	\sup\nolimits_{x \in [-q, q]^d} | g_0(f^{n, t, x}(s)) - g_0(f^{0, t, x}(s)) | = 0
	\quad \mbox{for all $q \in (0,\infty)$.}
	\]
Combining this with \eqref{eq:loc-unif-conv-f_n-f_0}, \eqref{eq:loc-unif-conv-unif-bound-Y_n}, and the triangle inequality yields
	\[
	\begin{split}
	&
	\limsup\nolimits_{n \to \infty} \bigl[ 
	\sup\nolimits_{t \in [0, T]} 
	\sup\nolimits_{s \in [0, t]} 
	\sup\nolimits_{x \in [-q, q]^d} 
	| g_n(f^{n, t, x}(s)) - g_0(f^{0, t, x}(s)) | \bigr] 
	\\
	& \leq 
	\limsup\nolimits_{n \to \infty} 
	\sup\nolimits_{t \in [0, T]} 
	\sup\nolimits_{s \in [0, t]} 
	\sup\nolimits_{x \in [-q, q]^d} 
	| g_n(f^{n, t, x}(s)) - g_0(f^{n, t, x}(s)) | 
	\\
	& \quad + \limsup\nolimits_{n \to \infty} 
	\sup\nolimits_{t \in [0, T]} 
	\sup\nolimits_{s \in [0, t]} 
	\sup\nolimits_{x \in [-q, q]^d} 
	| g_0(f^{n, t, x}(s)) - g_0(f^{0, t, x}(s)) | 
	\\
	& 
	\leq \limsup\nolimits_{n \to \infty} 
	\sup\nolimits_{t \in [0, T]} 
	\sup\nolimits_{s \in [0, t]} 
	\sup\nolimits_{y \in [- N_q, N_q]^m} | g_n(y) - g_0(y) | 
	= 0
	\end{split}
	\]
	for all $q \in (0, \infty)$, 
	which completes the proof of the lemma.
\end{proof}

\subsection{Integrability and regularity properties for solutions of ODEs with additive noise} 
\label{subsec:apriori_estimates}

The following is a consequence of \cite[Corollary 3.2]{LedTal}. For more details; see e.g.,
\cite[Lemma 6.3]{jentzen2019strong}. 

\begin{lemma} \label{le:integrability-Brownian-motion}
Let $T,c \in [0,\infty)$, $\alpha \in [0,2)$ and $d \in \N$.
Let $W \colon [0, T]\times \Omega \to \R^d$ be a standard Brownian motion
with continuous sample paths defined on a 
probability space $(\Omega,\F,\P)$. Then 
\begin{enumerate}[{\rm (i)}]
 \item \label{it:integrability-Brownian-motion-measurability} 
 the mapping $\Omega \ni \omega \mapsto \sup_{t \in [0,T]} \lVert{W_t(\omega)\rVert}_{\R^d}^{\alpha} \in \R$ is 
 $\F$/$\Borel(\R)$-measurable and 
 \item \label{it:integrability-Brownian-motion-exponential-integrability} 
 $ \E \left[\exp \brak{c \sup_{t\in [0,T]} \lVert{W_t\rVert}_{\R^d}^{\alpha}} \right]<\infty$.
\end{enumerate}
\end{lemma}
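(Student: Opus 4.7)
The plan is to establish the two claims separately, with part (i) reducing to a standard measurability argument and part (ii) following from Fernique-type Gaussian tail estimates combined with a convexity comparison.

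For (i), I would exploit the assumption that $W$ has continuous sample paths. Continuity implies that for every $\omega \in \Omega$ the supremum $\sup_{t \in [0,T]} \lVert W_t(\omega) \rVert_{\R^d}^{\alpha}$ coincides with $\sup_{t \in [0,T] \cap \Q} \lVert W_t(\omega) \rVert_{\R^d}^{\alpha}$. Each map $\omega \mapsto \lVert W_t(\omega) \rVert_{\R^d}^{\alpha}$ is $\F/\Borel(\R)$-measurable as a composition of measurable maps with continuous ones, and a pointwise supremum of countably many real-valued $\F/\Borel(\R)$-measurable functions is again $\F/\Borel(\R)$-measurable. This gives the measurability asserted in \eqref{it:integrability-Brownian-motion-measurability}.

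For (ii), denote $M := \sup_{t \in [0,T]} \lVert W_t \rVert_{\R^d}$. The Landau--Shepp--Fernique theorem (which is the content of Corollary~3.2 in Ledoux--Talagrand cited above) for the continuous Gaussian process $W$ yields the existence of some $\lambda_0 \in (0, \infty)$ with $\E[\exp(\lambda_0 M^2)] < \infty$. Since $\alpha \in [0, 2)$, for every $c \in [0, \infty)$ one may use the elementary inequality $c\, r^{\alpha} \leq \lambda_0 r^2 + C_{c, \alpha, \lambda_0}$ valid for all $r \in [0, \infty)$ with a suitable constant $C_{c, \alpha, \lambda_0} \in [0, \infty)$ (which follows because $r^{\alpha}/r^2 \to 0$ as $r \to \infty$). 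Applying this pointwise with $r = M(\omega)$ and taking expectations yields
\[
\E\bigl[\exp\bigl(c M^{\alpha}\bigr)\bigr]
\leq \exp(C_{c, \alpha, \lambda_0}) \cdot \E\bigl[\exp\bigl(\lambda_0 M^2\bigr)\bigr] < \infty,
\]
which establishes \eqref{it:integrability-Brownian-motion-exponential-integrability}.

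The only genuinely non-elementary ingredient is the Gaussian exponential tail bound for $M$, which I invoke as a black box from Ledoux--Talagrand; once this is available, the passage from the $M^2$ estimate to the $M^{\alpha}$ estimate for $\alpha < 2$ is just a convexity/growth comparison and poses no real obstacle.
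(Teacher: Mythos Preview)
Your proposal is correct and aligns with the paper's treatment: the paper does not give a self-contained proof but simply records the lemma as a consequence of \cite[Corollary~3.2]{LedTal} (the Landau--Shepp--Fernique bound) with a pointer to \cite[Lemma~6.3]{jentzen2019strong} for details, and your argument spells out precisely this route---Fernique for the $M^2$ bound, then the elementary growth comparison $c r^{\alpha} \leq \lambda_0 r^2 + C$ for $\alpha < 2$. The measurability argument via countable suprema over rationals is standard and unproblematic.
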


\begin{lemma} \label{le:sde-solution-exponential-integrability}
Let $T, c\in(0,\infty)$, $d\in\N$ and
 $\sigma\in\R^{d\times d}$. Let
$b \colon [0,T]\times\R^d \to \R^d$ be a Borel measurable function
satisfying $\lVert b(t, x) \rVert_{\R^d} \leq c ( 1 + \lVert x \rVert_{\R^d}) $
for all $t \in [0, T]$ and $x \in \R^d$.
Let $(\Omega,\F,\P)$ be a probability space supporting a standard Brownian motion
$U \colon [0,T]\times\Omega\to\R^d$ with continuous sample paths. Consider stochastic processes 
$R^{t,x} =(R^{t,x}_s)_{s\in[0,t]} \colon [0,t]\times\Omega\to\R^d$, $x\in\R^d$, $t\in[0,T]$
satisfying
    \be \label{eq:sde-solution-exponential-integrability-sde}
     R^{t,x}_s = x + \int_0^s b(t-r, \cY^{t,x}_r)\,dr + \sigma U_r \quad
     \mbox{for all $t\in[0,T]$, $s \in[0,t]$ and $x \in \R^d$.}
    \ee
Then
\begin{enumerate}[{\rm (i)}]
	\item \label{it:sde-solution-exponential-integrability:item1}
	$\mbox{}$\\[-8mm]
		\[
		\begin{split}
		& 
		\E\Bigl[ 
			\exp\Bigl( p 
			\sup\nolimits_{t \in [0, T]} 
			\sup\nolimits_{s \in [0, t]} 
			\lVert \cY^{t,x}_s \rVert_{\R^d} \Bigr) \Bigr] 
		\\
		& 
		\leq 
			\exp\bigl( p e^{cT} \lVert x \rVert_{\R^d} \bigr)
			\exp\bigl( p c T e^{cT} \bigr)\,
		\E\Bigl[ 
			\exp\bigl( p e^{cT} \sup\nolimits_{t \in [0, T]} \lVert \sigma U_t \rVert_{\R^d} \bigr)
			\Bigr] 
		< \infty 
		\end{split}
		\]
		for all $p \in (0, \infty)$ and $x \in \R^d$, and
	\item \label{it:sde-solution-exponential-integrability:item2} $\mbox{}$\\[-8mm]
	    \[ \label{eq:sde-solution-exponential-integrability-claim}
	    \E\!\left[\exp\Bigl(p\big[
	    	\sup\nolimits_{t\in [0,T]} \sup\nolimits_{s \in [0,t]}
	    	\sup\nolimits_{x\in [-p,p]^d} 
	    	\lVert\cY^{t,x}_s \rVert_{\R^d}\big]\Bigr)\right]<\infty
		\quad \mbox{for all $p \in (0, \infty)$.}
	    \]
	\end{enumerate}
\end{lemma}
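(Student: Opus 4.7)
The plan is to reduce both claims to a single pathwise deterministic bound obtained by Gronwall's inequality, and then invoke Lemma~\ref{le:integrability-Brownian-motion}\eqref{it:integrability-Brownian-motion-exponential-integrability}, which supplies the exponential integrability of the Brownian supremum term.

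First, I would fix $\omega \in \Omega$ and apply the linear growth hypothesis $\lVert b(u,y)\rVert_{\R^d} \le c(1+\lVert y\rVert_{\R^d})$ directly to \eqref{eq:sde-solution-exponential-integrability-sde}, giving, for every $t \in [0,T]$, $s \in [0,t]$ and $x \in \R^d$,
\[
\lVert R^{t,x}_s(\omega)\rVert_{\R^d} \;\le\; \lVert x\rVert_{\R^d} + cT + \sup_{r \in [0,T]}\lVert\sigma U_r(\omega)\rVert_{\R^d} + c\int_0^s \lVert R^{t,x}_r(\omega)\rVert_{\R^d}\,dr.
\]
The crucial observation is that the constant $c$ in front of the integral term is independent of $t$, so applying Gronwall's inequality pathwise and then taking suprema over $s \in [0,t]$ and $t \in [0,T]$ on the left yields the $t$-uniform bound
\[
\sup_{t \in [0,T]}\sup_{s \in [0,t]}\lVert R^{t,x}_s(\omega)\rVert_{\R^d} \;\le\; e^{cT}\bigl(\lVert x\rVert_{\R^d} + cT + \sup_{r \in [0,T]}\lVert\sigma U_r(\omega)\rVert_{\R^d}\bigr).
\]

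Claim \eqref{it:sde-solution-exponential-integrability:item1} then follows by multiplying this pathwise inequality by $p \in (0,\infty)$, exponentiating, and taking expectation: the $x$- and $T$-dependent terms factor out as the prefactors $e^{pe^{cT}\lVert x\rVert_{\R^d}}$ and $e^{pcTe^{cT}}$, and what remains is $\E[\exp(pe^{cT}\sup_{r \in [0,T]}\lVert\sigma U_r\rVert_{\R^d})]$. Using the elementary bound $\lVert\sigma U_r\rVert_{\R^d} \le \lVert\sigma\rVert\lVert U_r\rVert_{\R^d}$ with $\lVert\sigma\rVert$ a compatible matrix norm, Lemma~\ref{le:integrability-Brownian-motion}\eqref{it:integrability-Brownian-motion-exponential-integrability} applied with $\alpha = 1$ (and constant $p e^{cT}\lVert\sigma\rVert$) gives finiteness of this expectation, which delivers the claimed bound. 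Claim \eqref{it:sde-solution-exponential-integrability:item2} is obtained from the very same pathwise estimate by noting that the right-hand side is monotone in $\lVert x\rVert_{\R^d}$ and that $\sup_{x \in [-p,p]^d}\lVert x\rVert_{\R^d} \le \sqrt{d}\,p$; one can therefore insert the supremum over $x$ inside the bound before exponentiating, and a second application of Lemma~\ref{le:integrability-Brownian-motion} concludes the proof.

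No step is genuinely delicate. The only points to monitor are that the linear growth constant of $b$ is $t$-independent (so Gronwall produces a $t$-uniform estimate), and that $R^{t,x}$ has continuous sample paths (immediate from \eqref{eq:sde-solution-exponential-integrability-sde} together with path-continuity of $U$), so that all the suprema appearing above are measurable. The heart of the matter — turning the SDE estimate into an exponential moment bound — is handled in one line by the already-proved Gaussian-tail integrability of Brownian suprema.
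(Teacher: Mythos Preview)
Your proposal is correct and follows essentially the same approach as the paper's proof: both derive the pathwise Gronwall bound $\lVert R^{t,x}_s\rVert_{\R^d} \le e^{cT}(\lVert x\rVert_{\R^d}+cT+\sup_{r\in[0,T]}\lVert\sigma U_r\rVert_{\R^d})$, exponentiate, and invoke Lemma~\ref{le:integrability-Brownian-motion} for finiteness, with (ii) handled via $\sup_{x\in[-p,p]^d}\lVert x\rVert_{\R^d}\le\sqrt{d}\,p$. Your remarks on $t$-uniformity of the Gronwall constant and on measurability of the suprema are apt and in fact slightly more explicit than the paper's own treatment.
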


\begin{proof}[Proof]
It follows from \eqref{eq:sde-solution-exponential-integrability-sde} ,
the triangle inequality and the assumption that 
$\lVert b(t, x) \rVert_{\R^d} \leq c ( 1 + \lVert x \rVert_{\R^d}) $
for all $t\in [0, T]$ and $x \in \R^d$ that
	\[
	\begin{split}
	\Vert R^{t,x}_s \Vert_{\R^d} 
	&\leq 
	\Vert x \Vert_{\R^d} 
	+
	\int_0^s c \big(1+	\Vert R^{t,x}_r \Vert_{\R^d}\big)\,dr + \sup\nolimits_{r \in [0,T]} \Vert 
	\sigma U_r \Vert_{\R^d}
	\\&
	\leq 
	\lVert{ x \rVert}_{\R^d} + cT + \sup\nolimits_{r \in [0,T]} \lVert \sigma U_r \rVert_{\R^d} 
	+ 
	c \int_0^s \lVert R^{t,x}_r \rVert_{\R^d}\, dr
	\end{split}
	\]
for all $x\in \R^d$, $t \in [0,T]$ and $s \in [0,t]$. Hence, one obtains from
Gronwall's integral inequality (cf., e.g., \cite[Lemma~2.11]{grohs2018proof}) that 
	\[
	\Vert R^{t,x}_s \Vert_{\R^d} 
	\leq
	\left(\lVert x \rVert_{\R^d} + c T +\sup\nolimits_{r \in [0,T]} \lVert \sigma U_r \rVert_{\R^d} \right) 
	e^{c T}
	\]
for all $x\in \R^d$, $t \in [0,T]$ and $s \in [0,t]$. So it follows from
\cref{le:integrability-Brownian-motion} that
	\[
	\begin{split}
	& 
	\E\Bigl[ 
	\exp\Bigl( p 
	\sup\nolimits_{t \in [0, T]} \sup\nolimits_{s \in [0, t]} 
	\lVert R^{t,x}_s \rVert_{\R^d} \Bigr) \Bigr] 
	\\
	& 
	\leq 
	\exp\bigl( p e^{cT} \lVert x \rVert_{\R^d} \bigr)
	\exp\bigl( p c T e^{cT} \bigr)\,
	\E\Bigl[ 
	\exp\Bigl( p e^{cT} \sup\nolimits_{r \in [0, T]} \lVert \sigma U_r \rVert_{\R^d} \Bigr)
	\Bigr] 
	< \infty
	\end{split}
	\]
	for all $p \in (0, \infty)$ and $x \in \R^d$, as well as
	\[ \label{eq:sde-solution-exponential-integrability}
	\begin{split} &
	    \E\!\left[\exp\Bigl(p\big[
	    	\sup\nolimits_{t\in [0,T]} 
	    	\sup\nolimits_{x\in [-p,p]^d} 
	    	\sup\nolimits_{s \in [0,t]}\lVert R^{t,x}_s \rVert_{\R^d}\big]\Bigr)\right] \\
	& 
	\leq 
	\exp\bigl( p e^{cT} \sqrt{d} p \bigr)
	\exp\bigl( p c T e^{cT} \bigr)\,
	\E\Bigl[ 
	\exp\Bigl( p e^{cT} \sup\nolimits_{r \in [0, T]} \lVert \sigma U_r \rVert_{\R^d} \Bigr)
	\Bigr] 
	< \infty
	\end{split}
	    \]
	    for all $p \in (0, \infty)$, which proves the lemma.
\end{proof} 

\begin{lemma} \label{le:Y-differentiability-properties}
Let $T \in (0,\infty)$,  $d \in \N$ and $\sigma \in \R^{d\times d}$. Denote by 
$e_i$, $i \in \{1, 2, \ldots, d\}$, the standard unit vectors in $\R^d$. Let
$b \in C^{0,2}([0,T]\times\R^d,\R^d)$ have bounded partial derivatives
of first and second order with respect to the $x$-variables. Let
$(\Omega, \mathcal{F}, \P)$ be a probability space supporting a
standard Brownian motion $U \colon [0,T]\times\Omega\to\R^d$ with 
continuous sample paths, and let $R^{t,x} = (R^{t,x}_s)_{s\in[0,t]} \colon[0,t]\times\Omega \to \R^d$, 
$t\in [0,T]$, $x \in \R^d$, be stochastic processes satisfying
	\be \label{def:mathcal-Y-smooth-x-Y}
	R^{t,x}_s
	=
	x
	+ 
	\int_0^s b(t-r ,R^{t,x}_r) \,dr 
	+ 
	\sigma U_s \quad \mbox{for all 
	$t \in[0,T]$, $s \in [0,t]$ and $x \in \R^d$.}
	\ee
Then
	\begin{enumerate}[{\rm (i)}]
		\item\label{item:Y-C02} 
		for all $t \in [0,T]$ and $\omega \in \Omega$,
the mapping $(s,x) \mapsto R^{t,x}_s(\omega)$
is in $C^{0,2}([0,t]\times\R^d,\R^d)$,
		\item \label{eq:partial-x-mathcalY} $\mbox{}$\\[-9mm]
            \[
			\tfrac{\partial}{\partial x_i} R^{t,x}_s
            =
            e_i + \int_0^s \big[D_x b(t-r,R^{t,x}_r)\big]
            \!\big(\tfrac{\partial}{\partial x_i} R^{t,x}_r \big)\, dr
            \]
            for all $i \in \{1, 2, \dots, d\}$, $t \in [0,T]$, $s \in [0,t]$ and $x \in \R^d$,             
		\item  \label{eq:delaVaPou-x-cY} $\mbox{}$\\[-9mm]
            \[
            \sup\nolimits_{t \in [0,T]} 
            \sup\nolimits_{s \in [0,t]} 
            \sup\nolimits_{x \in \R^d} 
            \lVert\tfrac{\partial}{\partial x_i} R^{t,x}_s \rVert_{\R^d}
            \leq \exp\Bigl(
            T 
            \sup\nolimits_{(t,x)\in [0,T]\times \R^d}\Vert D_x b(t,x)\Vert_{L(\R^d,\R^d)}
            \Bigr)
            <\infty
            \]
            for all\footnote{For $d,k,n \in \N$ we denote by $L^{(n)}(\R^d, \R^k)$ the set of all
            continuous $n$-linear functions from $(\R^d)^n$ to $\R^k$. By $\lVert \cdot \rVert_{L^{(n)}(\R^d, \R^k)}$ we denote the operator norm on $L^{(n)}(\R^d, \R^k)$ given by 
 $\lVert f \rVert_{L^{(n)}(\R^d, \R^k)} = \sup_{v_1, v_2, \ldots, v_n \in \R^d \setminus\{0\}} \frac{\lVert f(v_1, v_2, \ldots, v_n) \rVert_{\R^k}}{\lVert v_1 \rVert_{\R^d} \lVert v_2 \rVert_{\R^d} \ldots \lVert v_n \rVert_{\R^d}}$.
 For simplicity, we set $L(\R^d, \R^k) = L^{(1)}(\R^d, \R^k)$ and denote the norm 
      $\lVert \cdot \rVert_{L^{(1)}(\R^d, \R^k)}$ by $\lVert \cdot \rVert_{L(\R^d,\R^k)}$.}
            $i \in \{1, 2, \dots, d\}$, 
		\item \label{eq:partial-x-x-mathcalY} $\mbox{}$\\[-8mm]
            \[
            \begin{split}
            &
            \tfrac{\partial^2}{\partial x_i \partial x_j} R^{t,x}_s
            \\&
            = 
            \int_0^s
            \big[ 
            D^2_x b(t-r,R^{t,x}_r)\big]\!\big(\tfrac{\partial}{\partial x_i} 
            R^{t,x}_r, \tfrac{\partial}{\partial x_j} R^{t,x}_r\big)
            \,dr
             +
            \int_0^s
            \big[
            D_x b(t-r,R^{t,x}_r)
            \big]\!\big(\tfrac{\partial^2}{\partial x_i \partial x_j}  R^{t,x}_r \big)
            \,dr
            \end{split}
            \]
            for all $i,j \in \{1, \dots, d\}$, $t \in [0,T]$, $s \in [0,t]$ and $x \in \R^d$,
        \item \label{eq:delaVaPou-x-x-cY} $\mbox{}$\\[-13mm]
            \begin{align*}
            \sup\nolimits_{t \in [0,T]} 
            \sup\nolimits_{s \in [0,t]} 
            \sup\nolimits_{x \in \R^d}
            \lVert \tfrac{\partial^2}{\partial x_i \partial x_j} R^{t,x}_s \rVert_{\R^d} 
            \leq
            T 
            \Bigl[
            \sup\nolimits_{(t,x)\in [0,T]\times \R^d}\big\Vert D^2_x b(t,x)\big\Vert_{L^{(2)}(\R^d,\R^d)}
            \Bigr]
            \\
            \times
            \exp\!\Big(
            3T \sup\nolimits_{(t,x)\in [0,T]\times \R^d}\Vert D_x b(t,x)\Vert_{L(\R^d,\R^d)} \Big)
            <\infty
            \end{align*}
            for all $i,j \in \{1,2,\dots,d\}$.
	\end{enumerate}
\end{lemma}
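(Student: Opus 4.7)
My plan is to exploit the additive-noise structure so that, for each fixed $\omega \in \Omega$ and $t \in [0,T]$, equation \eqref{def:mathcal-Y-smooth-x-Y} becomes a purely pathwise integral equation. Setting $Y^{t,x}_s(\omega) := R^{t,x}_s(\omega) - \sigma U_s(\omega)$ rewrites it as
\[
Y^{t,x}_s(\omega) = x + \int_0^s b\bigl(t-r,\, Y^{t,x}_r(\omega) + \sigma U_r(\omega)\bigr)\,dr,
\]
whose right-hand side is continuous in $(s,Y)$ and, by assumption on $b$, globally Lipschitz and of class $C^2$ in $Y$ with uniformly bounded first and second partial derivatives. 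Classical ODE theory on smooth dependence on the initial condition (e.g.\ via Picard iteration combined with the Banach fixed-point theorem on $C([0,t],\R^d)$ under a weighted sup norm) then yields existence, uniqueness and $C^{0,2}$ regularity of $(s,x) \mapsto Y^{t,x}_s(\omega)$ on $[0,t]\times\R^d$. Since $\sigma U_s(\omega)$ is independent of $x$ and continuous in $s$, assertion \eqref{item:Y-C02} follows.

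For \eqref{eq:partial-x-mathcalY} and \eqref{eq:partial-x-x-mathcalY}, I would formally differentiate the fixed-point identity \eqref{def:mathcal-Y-smooth-x-Y} under the integral sign with respect to $x_i$ (and then $x_j$), using the chain rule. This produces the linear variational equation for $\tfrac{\partial}{\partial x_i} R^{t,x}_s$ stated in \eqref{eq:partial-x-mathcalY} and, by a second differentiation, the inhomogeneous affine equation for $\tfrac{\partial^2}{\partial x_i \partial x_j} R^{t,x}_s$ in \eqref{eq:partial-x-x-mathcalY}. To justify the interchange of derivative and integral rigorously, I would define $D^{t,x,i}_s$ as the unique solution of the linear integral equation on the right-hand side of \eqref{eq:partial-x-mathcalY}, bound
\[
\bigl\lVert \tfrac{1}{h}\bigl(R^{t,x+h e_i}_s - R^{t,x}_s\bigr) - D^{t,x,i}_s \bigr\rVert_{\R^d}
\]
by a Gronwall-type integral of itself plus a Taylor-remainder term of order $o(1)$ as $h \to 0$ (exploiting the boundedness of $D^2_x b$), and close the argument with Gronwall's inequality; the analogous scheme handles the second-order difference quotients.

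For the estimates \eqref{eq:delaVaPou-x-cY} and \eqref{eq:delaVaPou-x-x-cY}, I would apply Gronwall's integral inequality directly to the variational equations. Writing $M_1 := \sup_{(t,x)} \lVert D_x b(t,x)\rVert_{L(\R^d,\R^d)}$ and $M_2 := \sup_{(t,x)} \lVert D^2_x b(t,x)\rVert_{L^{(2)}(\R^d,\R^d)}$, equation \eqref{eq:partial-x-mathcalY} and $\lVert e_i \rVert_{\R^d}=1$ give
\[
\lVert \tfrac{\partial}{\partial x_i} R^{t,x}_s \rVert_{\R^d} \le 1 + M_1 \int_0^s \lVert \tfrac{\partial}{\partial x_i} R^{t,x}_r \rVert_{\R^d}\,dr,
\]
and Gronwall yields the bound $e^{T M_1}$ stated in \eqref{eq:delaVaPou-x-cY}. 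Substituting this into the forcing term of the equation in \eqref{eq:partial-x-x-mathcalY} produces a uniform bound of order $M_2 e^{2 T M_1}$ for the inhomogeneity, so starting from the zero initial value and applying Gronwall a second time delivers the claimed bound of order $T M_2 e^{3 T M_1}$ in \eqref{eq:delaVaPou-x-x-cY}.

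The main obstacle is not the formulas themselves but the rigorous derivation of \eqref{item:Y-C02} jointly with \eqref{eq:partial-x-mathcalY} and \eqref{eq:partial-x-x-mathcalY}, i.e.\ showing that the candidate partial derivatives obtained by solving the variational equations coincide with the actual partial derivatives of $R^{t,x}_s$ and are jointly continuous in $(s,x)$. Joint continuity is obtained by viewing $(s,x) \mapsto \tfrac{\partial}{\partial x_i} R^{t,x}_s$ as the unique fixed point of a contraction in $C([0,t]\times K, \R^d)$ for each compact $K \subseteq \R^d$, where continuity in $x$ of $R^{t,x}_r$ together with continuity of $D_x b$ forces continuity of the fixed point; the same strategy, applied to the affine equation in \eqref{eq:partial-x-x-mathcalY}, handles the second derivatives and completes the proof.
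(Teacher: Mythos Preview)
Your proposal is correct, and the Gronwall arguments for \eqref{eq:delaVaPou-x-cY} and \eqref{eq:delaVaPou-x-x-cY} are exactly what the paper does. The route to \eqref{item:Y-C02}, \eqref{eq:partial-x-mathcalY} and \eqref{eq:partial-x-x-mathcalY}, however, differs in presentation. The paper does not perform the additive-noise substitution $Y^{t,x}_s = R^{t,x}_s - \sigma U_s$; instead it differentiates \eqref{def:mathcal-Y-smooth-x-Y} directly under the time integral, justifying the interchange via the de~la~Vall\'ee~Poussin and Vitali convergence theorems (to handle uniform integrability of the difference quotients of the integrand on $[0,s]$), and it cites Kunita's stochastic-flow differentiability theorem as an alternative reference. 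Your reduction to a deterministic ODE with $C^2$ right-hand side and appeal to classical smooth dependence on initial data is more elementary and fully self-contained for this additive-noise setting, while the paper's formulation has the advantage of fitting into a template that would still apply if $\sigma$ were state-dependent (where the pathwise subtraction trick is unavailable). Both approaches produce the same variational equations and the same explicit bounds.
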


\begin{proof}[Proof] 
Since, by assumption, $b \in C^{0,2}([0,T]\times\R^d,\R^d)$ has bounded partial 
derivatives with respect to the $x$-variables, it follows from 
\eqref{def:mathcal-Y-smooth-x-Y}, the de la Vall\'ee Poussin theorem 
(cf., e.g., \cite[Corollary~6.21]{Klenke_2014})
and the Vitali convergence theorem (cf., e.g., \cite[Theorem~6.25]{Klenke_2014})
that for all $i \in \{1, 2, \dots, d\}$, $t \in [0,T]$ and $\omega \in \Omega$,
the mapping 
 $(s,x) \mapsto R^{t,x}_s(\omega)\in \R^d$ is in $C^{0,1}([0,t]\times\R^d,\R^d)$ 
and satisfies
	\[
	\label{eq:partial-x-mathcalY-2}
	\tfrac{\partial}{\partial x_i} R^{t,x}_s
	= e_i + \int_0^s
	\big[ D_x b(t-r,R^{t,x}_r)\big]\!\big(\tfrac{\partial}{\partial x_i} 
	R^{t,x}_r \big)\,dr \quad \mbox{for all $s \in [0,t]$, $x \in \R^d$.}
	\]
(cf.\ also, e.g., \cite[Theorem~4.6.5]{kunita1997stochastic}).
This shows \eqref{eq:partial-x-mathcalY}.
	
Now, \eqref{eq:partial-x-mathcalY} and the assumption that 
$b \in C^{0,2}([0,T]\times\R^d,\R^d)$ has bounded partial derivatives 
with respect to the $x$-variables imply that for all 
    $i \in \{1,2,\dots,d\}$, 
    $x\in \R^d$, 
    $t \in [0,T]$ and
    $s \in [0,t]$, one has
	\[
	\lVert \tfrac{\partial}{\partial x_i} R^{t,x}_s \rVert_{\R^d}
	\leq 
	1 
	+ \int_0^s 
	\sup\nolimits_{(w,y) \in [0,T]\times \R^d} \Vert D_x b(w,y)\Vert_{L(\R^d,\R^d)}
	\lVert \tfrac{\partial}{\partial x_i}R^{t,x}_r \rVert_{\R^d} \,dr,
	\]
which, by Gronwall's integral inequality (cf., e.g., \cite[Lemma~2.11]{grohs2018proof}), yields
	\[
	\sup\nolimits_{t \in [0,T]} 
	\sup\nolimits_{s \in [0,t]} 
	\sup\nolimits_{x \in \R^d} 
	\lVert\tfrac{\partial}{\partial x_i} R^{t,x}_s \rVert_{\R^d}
	\leq \exp\Bigl(
	T \sup\nolimits_{(t,x)\in [0,T]\times \R^d}\Vert D_x b
	(t,x)\Vert_{L(\R^d,\R^d)}	\Bigr)
	<\infty,
	\]
	for all $i \in \{1,2,\dots,d\}$. This establishes \eqref{eq:delaVaPou-x-cY}.

Next, we observe that \eqref{def:mathcal-Y-smooth-x-Y},  \eqref{eq:partial-x-mathcalY}, 
the assumption that $b \in C^{0,2}([0,T]\times \R^d,\R^d)$ has bounded partial derivatives
of second order with respect to the $x$-variables, 
the de la Vall\'ee Poussin theorem (cf., e.g., \cite[Corollary~6.21]{Klenke_2014}), 
and the Vitali convergence theorem (cf., e.g., \cite[Theorem~6.25]{Klenke_2014}) ensure that 
for all
    $i,j \in \{1, 2, \dots, d\}$, $t \in [0,T]$
    and $\omega \in \Omega$, the mapping
    $(s,x) \mapsto R^{t,x}_s(\omega)\in \R^d$ is in 
    $C^{0,2}([0,T]\times\R^d,\R^d)$ and satisfies
    \[
	 \label{eq:partial-x-x-mathcalY-2}
	 \tfrac{\partial^2}{\partial x_i \partial x_j} R^{t,x}_s
	 = 
	 \int_0^s
	 \big[ 
	 D^2_x b(t-r,R^{t,x}_r)\big]\!
	 \big(\tfrac{\partial}{\partial x_i} R^{t,x}_r, \tfrac{\partial}{\partial x_j} R^{t,x}_r \big)
	 \,dr
	 +
	 \int_0^s
	 \big[
	 D_x b (t-r,R^{t,x}_r)
	 \big]\!
	 \big( \tfrac{\partial^2}{\partial x_i \partial x_j} R^{t,x}_r \big)
	 \,dr
     \]
     for all $s \in [0,t]$ and $x \in \R^d$ 
(cf.\ also, e.g., \cite[Theorem~4.6.5]{kunita1997stochastic}).
This shows \eqref{item:Y-C02} and \eqref{eq:partial-x-x-mathcalY}.
	 
Since $b \in C^{0,2}([0,T]\times \R^d,\R^d)$ has bounded partial derivatives 
of second order with respect to the $x$-variables, it follows from
\eqref{eq:delaVaPou-x-cY}--\eqref{eq:partial-x-x-mathcalY} that
for all $i,j \in \{1,2,\dots,d\}$, $x\in \R^d$, $t\in [0,T]$ and $s \in [0,t]$, one has
	\[
	\begin{split}
	\lVert \tfrac{\partial^2}{\partial x_i \partial x_j} R^{t,x}_s \rVert_{\R^d} 
    &\leq
	T 
	\Bigl[\sup\nolimits_{(w,y)\in [0,T]\times \R^d}\big\Vert D^2_x b(w,y)
	\big\Vert_{L^{(2)}(\R^d,\R^d)}\Bigr]
    \\
    & 
    \quad \times \exp\!\bigg(
	2T \sup\nolimits_{(w,y)\in [0,T]\times \R^d}\Vert D_x b
	(w,y)\Vert_{L(\R^d,\R^d)}
    \bigg)\\
    & 
	+ 
	\int_0^s  
    \Bigl[\sup\nolimits_{(w,y)\in [0,T]\times \R^d}\Vert D_x b (w,y)\Vert_{L(\R^d,\R^d)}\Bigr]
    \lVert \tfrac{\partial^2}{\partial x_i \partial x_j} R^{t,x}_r\rVert_{\R^d}
	 \, dr,
	 \end{split}
	 \]
which, by Gronwall's integral inequality (cf., e.g., \cite[Lemma~2.11]{grohs2018proof}), 
implies that
    \[
	\begin{split}
    &
    \sup\nolimits_{t \in [0,T]} 
	\sup\nolimits_{s \in [0,t]} 
	\sup\nolimits_{x \in \R^d}
    \lVert \tfrac{\partial^2}{\partial x_i \partial x_j} R^{t,x}_s \rVert_{\R^d} 
    \\
    &\leq
    T 
    \Bigl[\sup\nolimits_{(t,x)\in [0,T]\times \R^d}\big\Vert D^2_x b
    (t,x)\big\Vert_{L^{(2)}(\R^d,\R^d)}\Bigr]\!
    \exp\Bigl(
	3T \sup\nolimits_{(t,x)\in [0,T]\times \R^d}\Vert
	D_x b(t,x)\Vert_{L(\R^d,\R^d)}
    \Bigr)<\infty,
    \end{split}
	\]
for all  $i,j \in \{1,2,\dots,d\}$. This 
establishes \eqref{eq:delaVaPou-x-x-cY} and completes the proof of the lemma.	
\end{proof}

\subsection{Stability properties of solutions of ODEs with additive noise} \label{subsec:stability}

\begin{lemma}\label{le:Y_processes_stability}
Let $T \in (0, \infty)$, $d \in \N$, $\sigma \in \R^{d\times d}$, and consider
two mappings $b, \mathfrak{b} \in C^{0, 2}([0,T]\times\R^d, \R^d)$ that have bounded 
	partial derivatives of first and second order
	with respect to the $x$-variables. Let
	$(\Omega, \mathcal F, \P)$ be a probability space supporting a 
	standard Brownian motion $U \colon [0, T] \times \Omega \to \R^d$
	with continuous sample paths, and consider stochastic processes 
	$R^{t,x}, \cR^{t,x}  \colon [0, t] \times \Omega \to \R^d$, $t \in [0, T]$,
	$x \in \R^d$, satisfying
		\be \label{le:Y_processes_stability:def_mc_Y_mb_Y}
		R^{t, x}_s = x + \int_0^s b(t- r, R^{t,x}_r)\,dr + \sigma U_s
		\quad\text{and}\quad 
		\mathcal{R}^{t, x}_s = x + \int_0^s \mathfrak{b}(t-r, \mathcal{R}^{t,x}_r)\,dr + \sigma U_s
		\ee
		for all $t \in [0, T]$, $s \in [0, t]$ and $x \in \R^d$.
	Then 
	\begin{enumerate}[{\rm (i)}]
		\item \label{le:Y_processes_stability:item1} $\mbox{}$\\[-8.5mm]
			\[
			\begin{split}
			&\sup\nolimits_{s \in [0, t]} \Vert \cR^{t,x}_s - R^{t,x}_s \Vert_{\R^d}
			\\
			&  
			\leq T 
			\sup\nolimits_{(r, y)\in [0,T]\times \R^d}  \Vert \fb(r, y)- b(r, y) \Vert_{\R^d} 
			\exp\!\left(T \sup\nolimits_{(r, y)\in [0,T] \times \R^d} 
			\big\Vert D_x b (r, y)\Vert_{L(\R^d,\R^d)}
			\right)\!, 
			\end{split}
			\]
			for all $t \in [0,T]$ and $x \in \R^d$,
		\item \label{le:Y_processes_stability:item2} $\mbox{}$\\[-11.5mm]
			\begin{align*} 
			\nonumber  
			& 
			\sup\nolimits_{s \in [0, t]} \Vert \tfrac{\partial}{\partial x_i}
			\cR^{t, x}_s - \tfrac{\partial}{\partial x_i} R^{t, x}_s \Vert_{\R^d}
			\\
			& \leq 
			T 
			\sup\nolimits_{s \in [0,t]} 
			\big\Vert D_x \fb (t-s,R^{t,x}_s)
			- D_x b(t-s,\cR^ {t,x}_s)\big\Vert_{L(\R^d,\R^d)} 
			\\ \nonumber
			& 
			\times \exp\!\bigg(
			T \Big[\sup\nolimits_{(r, y)\in [0,T]\times \R^d}
			\Vert D_x \fb (r, y)\Vert_{L(\R^d,\R^d)} 
			+ \sup\nolimits_{(r, y)\in[0,T]\times \R^d}\big\Vert D_x b(r, y)\big\Vert_{L(\R^d,\R^d)}
			\Big]\bigg),
			\end{align*}
	for all $i \in \{1, 2, \ldots, d\}$, $t \in [0, T]$, $x \in \R^d$, and
		\item \label{le:Y_processes_stability:item3} $\mbox{}$\\[-8mm]
			\[
			\begin{split}
			&\sup\nolimits_{s \in [0, t]} \lVert 
			\tfrac{\partial^2}{\partial x_i \partial x_j} \cR^{t,x}_s
			- \tfrac{\partial^2}{\partial x_i \partial x_j} R^{t,x}_s \rVert_{\R^d} 
			\\& \leq 
			\exp\Bigl(T \sup\nolimits_{(r, y) \in [0, T]\times\R^d} \lVert 
			D_x \fb(r, y) \rVert_{L(\R^d, \R^d)} \Bigr) 
			\\
			&  
			\times \bigg[
			2 T^2 
			\sup\nolimits_{(r,y)\in[0,T]\times\R^d} \lVert 
			D^2_x \mathfrak{b}(r, y) \rVert_{L^{(2)}(\R^d, \R^d)} 
			\\
			& \,\,\,\, \times 
			 \exp\Bigl( 3T \sup\nolimits_{(r, y) \in [0, T]\times\R^d} 
			\max\!\Big\{ \lVert D_x \fb (r, y) \rVert_{L(\R^d, \R^d)},  
			\lVert D_x b(r, y) \rVert_{L(\R^d, \R^d)} \Big\} \Bigr) 
			\\
			& \,\,\,\, \times 
		 \sup\nolimits_{s \in [0, t]} \lVert D_x \mathfrak{b}(t-s, \mathcal{R}^{t,x}_s) - 
		 D_x b (t-s, R^{t,x}_s) \rVert_{L(\R^d, \R^d)} 
			\\
			& \,\, + 
			T 
			\exp\Bigl( 2T \sup\nolimits_{(r, y) \in [0, T]\times\R^d} 
			\lVert D_x b(r, y) \rVert_{L(\R^d, \R^d)} \Bigr)
			\\
			& \,\,\,\, \times 
			 \sup\nolimits_{s \in [0, t]} \lVert D^2_x \mathfrak{b}(t-s, \mathcal{R}^{t,x}_s) 
			- D^2_x b(t-s, R^{t,x}_s) \rVert_{L^{(2)}(\R^d, \R^d)} 
			\\
			& 
			\,\, + T^2 
			\sup\nolimits_{(r,y)\in [0,T]\times \R^d} \lVert D^2_x b(r, y) \rVert_{L^{(2)}(\R^d, \R^d)} 
			\\
			& \,\,\,\, \times 
			\exp\Bigl( 3T \sup\nolimits_{(r, y)\in [0, T]\times\R^d} \lVert D_x \mathfrak{b}(r, y) \rVert_{L(\R^d, \R^d)} \Bigr) 
			\\
			& \,\,\,\, \times 
			\sup\nolimits_{s \in [0, t]} \lVert D_x \mathfrak{b}(t-s, \mathcal{R}^{t,x}_s) 
			- D_x b (t-s, R^{t,x}_s) \rVert_{\R^d} \bigg].
			\end{split}
			\]
			for all $i, j \in \{1, 2, \ldots, d\}$, $t \in [0, T]$ and $x \in \R^d$.
				\end{enumerate}
\end{lemma}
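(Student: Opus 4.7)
All three bounds follow the same general template: subtract the integral equations for $\mathcal{R}^{t,x}$ and $R^{t,x}$ (respectively their derivatives from \cref{le:Y-differentiability-properties}~\eqref{eq:partial-x-mathcalY} and \eqref{eq:partial-x-x-mathcalY}), perform a telescoping split of the integrand so that one piece is a Lipschitz-type term in the unknown difference and the other pieces collect the explicit "coefficient gap'', then invoke Gronwall's integral inequality (\cite[Lemma~2.11]{grohs2018proof}) exactly as was used inside the proof of \cref{le:Y-differentiability-properties}. The bounds on the first and second $x$-derivatives of $R^{t,x}$ and $\mathcal{R}^{t,x}$ supplied by \cref{le:Y-differentiability-properties}~\eqref{eq:delaVaPou-x-cY} and \eqref{eq:delaVaPou-x-x-cY} will be used whenever such a factor appears after the split.

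For \eqref{le:Y_processes_stability:item1}, I would subtract the two identities in \eqref{le:Y_processes_stability:def_mc_Y_mb_Y}, insert $\pm\, b(t-r,\mathcal{R}^{t,x}_r)$, and bound the resulting integrand by
$\sup_{(r,y)}\Vert \mathfrak{b}(r,y)-b(r,y)\Vert_{\R^d} + \sup_{(r,y)}\Vert D_x b(r,y)\Vert_{L(\R^d,\R^d)}\,\Vert \mathcal{R}^{t,x}_r-R^{t,x}_r\Vert_{\R^d}$,
via the fundamental theorem of calculus applied to $y\mapsto b(t-r,y)$. Gronwall with constant forcing then yields exactly the stated estimate.

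For \eqref{le:Y_processes_stability:item2}, I would start from the integral equation for $\tfrac{\partial}{\partial x_i}R^{t,x}_s$ and the analogous one for $\tfrac{\partial}{\partial x_i}\mathcal{R}^{t,x}_s$, subtract, and split the integrand as
\[
\bigl[D_x \mathfrak{b}(t-r,\mathcal{R}^{t,x}_r)\bigr]\!\bigl(\tfrac{\partial}{\partial x_i}\mathcal{R}^{t,x}_r-\tfrac{\partial}{\partial x_i}R^{t,x}_r\bigr) + \bigl[D_x \mathfrak{b}(t-r,\mathcal{R}^{t,x}_r)-D_x b(t-r,R^{t,x}_r)\bigr]\!\bigl(\tfrac{\partial}{\partial x_i}R^{t,x}_r\bigr).
\]
The first piece is Lipschitz in the unknown derivative-difference with constant $\sup\Vert D_x\mathfrak{b}\Vert_{L(\R^d,\R^d)}$; the second piece is controlled by $\sup_{s}\Vert D_x\mathfrak{b}(t-s,\mathcal{R}^{t,x}_s)-D_x b(t-s,R^{t,x}_s)\Vert_{L(\R^d,\R^d)}$ multiplied by the a~priori bound $\exp(T\sup\Vert D_x b\Vert_{L(\R^d,\R^d)})$ from \cref{le:Y-differentiability-properties}~\eqref{eq:delaVaPou-x-cY}. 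Gronwall with constant forcing produces the product of exponentials $\exp(T[\sup\Vert D_x\mathfrak{b}\Vert+\sup\Vert D_x b\Vert])$.

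For \eqref{le:Y_processes_stability:item3} the same idea is iterated one level deeper, and this is the main bookkeeping obstacle. Starting from the explicit formula in \cref{le:Y-differentiability-properties}~\eqref{eq:partial-x-x-mathcalY}, the difference $\tfrac{\partial^2}{\partial x_i\partial x_j}\mathcal{R}^{t,x}_s-\tfrac{\partial^2}{\partial x_i\partial x_j}R^{t,x}_s$ decomposes into a Lipschitz term $[D_x\mathfrak{b}(t-r,\mathcal{R}^{t,x}_r)](\tfrac{\partial^2}{\partial x_i\partial x_j}\mathcal{R}^{t,x}_r-\tfrac{\partial^2}{\partial x_i\partial x_j}R^{t,x}_r)$ plus three forcing terms: one proportional to $D_x\mathfrak{b}-D_x b$ multiplied by a second derivative of $R^{t,x}$, one proportional to $D_x^2\mathfrak{b}-D_x^2 b$ multiplied by two first derivatives, and one proportional to $D_x\mathfrak{b}-D_x b$ multiplied by products of first derivatives arising from further telescoping (using part~\eqref{le:Y_processes_stability:item2}). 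Controlling each first- and second-derivative factor by \cref{le:Y-differentiability-properties}~\eqref{eq:delaVaPou-x-cY}--\eqref{eq:delaVaPou-x-x-cY} and applying Gronwall one last time with the coefficient $\sup\Vert D_x\mathfrak{b}\Vert_{L(\R^d,\R^d)}$ gives exactly the three-summand estimate in the statement. The only real difficulty is to choose the split of the integrand that produces the specific grouping and exponents appearing in the claim; I would guide the split so that the outer exponential always comes from Gronwall applied to $D_x\mathfrak{b}$, while the exponents inside each summand are the ones forced by the cascade of first-derivative bounds from \cref{le:Y-differentiability-properties}.
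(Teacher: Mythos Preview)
Your proposal is correct and follows essentially the same route as the paper's proof: subtract the integral equations (and their differentiated versions from \cref{le:Y-differentiability-properties}), telescope so that one summand is Lipschitz in the unknown difference while the remaining summands collect the coefficient gap, plug in the a~priori derivative bounds from \cref{le:Y-differentiability-properties}~\eqref{eq:delaVaPou-x-cY}--\eqref{eq:delaVaPou-x-x-cY}, and apply Gronwall. The only cosmetic difference is that the paper begins by noting one may assume $\sup_{(t,x)}\Vert\mathfrak{b}(t,x)-b(t,x)\Vert_{\R^d}<\infty$ (otherwise \eqref{le:Y_processes_stability:item1} is vacuous), and in its treatment of \eqref{le:Y_processes_stability:item3} it writes the five-term telescoping split explicitly before grouping into the three forcing summands you describe; your ``one proportional to $D_x\mathfrak{b}-D_x b$ times products of first derivatives'' is more precisely $\sup\lVert D_x^2\mathfrak{b}\rVert$ times a first-derivative difference, which is then bounded via \eqref{le:Y_processes_stability:item2}---but you already indicate that this is where part~\eqref{le:Y_processes_stability:item2} enters, so the logic is the same.
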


\begin{proof}[Proof] 
Throughout this proof we assume without loss of generality that 
	\be \label{eq:Y_processes_stability_wlog}
	\sup\nolimits_{(t,x) \in [0,T]\times \R^d} \Vert \mathfrak{b}(t,x) - b(t,x) \Vert_{\R^d} < \infty.
	\ee
From \eqref{le:Y_processes_stability:def_mc_Y_mb_Y}
and the triangle inequality we obtain
\[
	\begin{split}
	& \lVert \mathcal{R}^{t,x}_s - R^{t,x}_s \rVert_{\R^d} 
	\leq 
	\int_0^s \lVert \mathfrak{b}(t-r,\mathcal{R}^{t,x}_r) - b(t-r, R^{t,x}_r) \rVert_{\R^d} \, dr \\ 
	& \leq
	\int_0^s \Big[ 
	\lVert \mathfrak{b}(t-r,\mathcal{R}^{t,x}_r) - b(t-r,\mathcal{R}^{t,x}_r) \rVert_{\R^d} 
	+
	\lVert b(t-r,\mathcal{R}^{t,x}_r) - b(t-r,R^{t,x}_r) \rVert_{\R^d} \Big] \,dr
	\end{split}
	\]
	for all $t \in [0,T]$, $s \in [0,t]$ and $x \in \R^d$.
Combining this with \eqref{eq:Y_processes_stability_wlog} and the assumption that 
$b$ has bounded partial derivatives with respect to the $x$-variables yields
	\[
	\begin{split} 
	\Vert \mathcal{R}^{t,x}_s - R^{t,x}_s \Vert_{\R^d}
	&\leq  
	T \sup\nolimits_{(r, y) \in [0, T]\times \R^d}  \Vert \mathfrak{b}(r,y)- b(r,y) \Vert_{\R^d}\\ 
	& 
	+
	\sup\nolimits_{(r, y) \in [0, T]\times \R^d} 
	\big\Vert D_x b(r, y)\Vert_{L(\R^d,\R^d)}
	\int_0^s \Vert
	\mathcal{R}^{t,x}_r - R^{t,x}_r \Vert_{\R^d}
	\,dr
	\end{split}
	\]
for all $t \in [0, T]$, $s \in [0, t]$ and $x \in \R^d$. Therefore, we obtain from
Gronwall's integral inequality (cf., e.g., \cite[Lemma~2.11]{grohs2018proof}) that
	\[
	\begin{split}
	&\sup\nolimits_{s \in [0,t]}
	\Vert \mathcal{R}^{t,x}_s - R^{t,x}_s \Vert_{\R^d} \\
	&\leq 
	T \sup\nolimits_{(r, y)\in [0,T]\times \R^d}  \Vert \mathfrak{b}(r, y)- b(r, y) \Vert_{\R^d}
	\exp\Bigl(T \sup\nolimits_{(r, y)\in [0,T]\times \R^d} 
	\big\Vert D_x b(r, y)\Vert_{L(\R^d,\R^d)}
	\Bigr)
	\end{split}
	\]
	for all $t\in [0,T]$ and $x \in \R^d$, which
establishes \eqref{le:Y_processes_stability:item1}. 

Next, observe that \eqref{le:Y_processes_stability:def_mc_Y_mb_Y},  
\cref{le:Y-differentiability-properties}.\eqref{eq:partial-x-mathcalY} and the triangle inequality imply that 
	\[
	\begin{split}
	\lVert \tfrac{\partial}{\partial x_i} \mathcal{R}^{t,x}_s - \tfrac{\partial}{\partial x_i} 
	R^{t,x}_s \rVert_{\R^d}
	& \leq 
	\int_0^s \big \lVert 
	\big[D_x \mathfrak{b}(t-r, \mathcal{R}^{t,x}_r)\big](\tfrac{\partial}{\partial x_i} 
	\mathcal{R}^{t,x}_r) - \big[D_x b(t-r, R^{t,x}_r) \big](\tfrac{\partial}{\partial x_i} R^{t,x}_r)
	\big \rVert_{\R^d}\,dr	
	\\
	& \leq 
	\int_0^s \lVert D_x \mathfrak{b}(t-r, \mathcal{R}^{t,x}_r) \rVert_{L(\R^d,\R^d)} 
	\lVert \tfrac{\partial}{\partial x_i}\mathcal{R}^{t,x}_r - \tfrac{\partial}{\partial x_i} R^{t,x}_r \rVert_{\R^d} \,dr 
	\\
	& + 
	\int_0^s \lVert D_x (t-r, \mathcal{R}^{t,x}_r)
	- D_x b(t-r,R^{t,x}_r) \rVert_{L(\R^d, \R^d)} \lVert \tfrac{\partial}{\partial x_i} 
	R^{t,x}_r \rVert_{\R^d} \,dr
	\end{split}
	\]
for all $i \in \{1,2,\dots,d\}$, $t \in [0,T]$, $s \in [0,t]$ and $x \in \R^d$. Therefore,
we obtain from another application of 
\cref{le:Y-differentiability-properties}.\eqref{eq:delaVaPou-x-cY} that
	\[
	\begin{split}
	\lVert \tfrac{\partial}{\partial x_i} \mathcal{R}^{t,x}_s 
	- \tfrac{\partial}{\partial x_i} R^{t,x}_s \rVert_{\R^d}
	& \leq 
	\sup\nolimits_{(r, y) \in [0, T]\times\R^d} \big \Vert D_x \mathfrak{b}(r, y) \big \Vert_{L(\R^d,\R^d)} 
	\int_0^s \lVert \tfrac{\partial}{\partial x_i}\mathcal{R}^{t,x}_r - \tfrac{\partial}{\partial x_i} R^{t,x}_r \rVert_{\R^d} \,dr 
	\\[1ex]
	& + 
	T  \sup\nolimits_{r \in [0, t]} \big \Vert D_x \mathfrak{b}(t-r, \mathcal{R}^{t,x}_r) 
	- D_x b(t-r,R^{t,x}_r) \rVert_{L(\R^d, \R^d)} 
	\\
	& \quad \times
	\exp\!\bigg( T \sup\nolimits_{(r, y)\in [0,T]\times \R^d}\Vert D_x b
	(r, y)\Vert_{L(\R^d,\R^d)} \bigg)  
	\end{split}
	\]
for all $i \in \{1, 2, \ldots, d\}$, $t \in [0, T]$, $s \in [0, t]$ and $x \in \R^d$.
Gronwall's inequality (cf., e.g., \cite[Lemma~2.11]{grohs2018proof}) hence ensures that
	\begin{align*}
	\lVert \tfrac{\partial}{\partial x_i} \mathcal{R}^{t,x}_s - \tfrac{\partial}{\partial x_i} R^{t,x}_s \rVert_{\R^d} 
	\leq
	T 
	\sup\nolimits_{r \in [0,t]} 
	\big\Vert D_x \mathfrak{b}(t-r,\mathcal{R}^{t,x}_r)-
	D_x b(t-r,R^ {t,x}_r)\big\Vert_{L(\R^d,\R^d)} \\
	\times 
	\exp\!\bigg(
	T \Big[ \sup\nolimits_{(r, y) \in [0,T]\times \R^d}\Vert D_x b
	(r, y)\Vert_{L(\R^d,\R^d)}
	+
	\sup\nolimits_{(r, y)\in[0,T]\times \R^d}\big \Vert D_x \mathfrak{b}(r, y)\big\Vert_{L(\R^d,\R^d)}
	\Big]\bigg)
	\end{align*}
for all $i \in \{1, 2, \dots, d\}$, 
$t \in [0, T]$, $s \in [0,t]$ and $x \in \R^d$. This shows \eqref{le:Y_processes_stability:item2}. 

Now, note that it follows from \eqref{le:Y_processes_stability:def_mc_Y_mb_Y} and 
\cref{le:Y-differentiability-properties}.\eqref{eq:partial-x-x-mathcalY} that
	\[
	\begin{split}
	\tfrac{\partial^2}{\partial x_i \partial x_j} \mathcal{R}^{t,x}_s
	-
	\tfrac{\partial^2}{\partial x_i \partial x_j} R^{t,x}_s 
	& =
	\int_0^s
	\big[ D^2_x \mathfrak{b}(t-r, \mathcal{R}^{t,x}_r) \big]\!
	\big(\tfrac{\partial}{\partial x_i} \mathcal{R}^{t,x}_r, 
	\tfrac{\partial}{\partial x_j} \mathcal{R}^{t,x}_r - \tfrac{\partial}{\partial x_j} R^{t,x}_r \big)
	\, dr 
	\\
	& + 
	\int_0^s 
	\big[ D^2_x \mathfrak{b}(t-r, \mathcal{R}^{t,x}_r) \big]\!
	\big(\tfrac{\partial}{\partial x_i} \mathcal{R}^{t,x}_r 
	- \tfrac{\partial}{\partial x_i} R^{t,x}_r, \tfrac{\partial}{\partial x_j} R^{t,x}_r \big)\,dr
	\\
	& + 
	\int_0^s 
	\big[ D^2_x \mathfrak{b}(t-r, \mathcal{R}^{t,x}_r) 
		- D^2_x b(t-r, R^{t,x}_r) \big]\!
	\big(\tfrac{\partial}{\partial x_i} R^{t,x}_r, 
	\tfrac{\partial}{\partial x_j} R^{t,x}_r \big) \, dr
	\\ 
	& 
	+ 
	\int_0^s 
	\big[ D_x \mathfrak{b}(t-r, \mathcal{R}^{t,x}_r) \big]
	\!\big(\tfrac{\partial^2}{\partial x_i \partial x_j} \mathcal{R}^{t,x}_r
	-	\tfrac{\partial^2}{\partial x_i \partial x_j} R^{t,x}_r \big) \, dr
	\\
	&
	+ 
	\int_0^s 
	\big[ D_x \mathfrak{b}(t-r, \mathcal{R}^{t,x}_r) 
		- D_x b(t-r, R^{t,x}_r) \big]\!
	\big( \tfrac{\partial^2}{\partial x_i \partial x_j} R^{t,x}_r \big)
	\, dr
	\end{split}
	\]
for all $i,j \in \{1, 2, \dots, d\}$, 
$t \in [0, T]$, $s \in [0,s]$ and $x \in \R^d$.
\eqref{le:Y_processes_stability:item2} together with \eqref{eq:delaVaPou-x-cY} 
and \eqref{eq:delaVaPou-x-x-cY} of \cref{le:Y-differentiability-properties}
therefore yield
\[
	\begin{split}
	& 
	\lVert 
	\tfrac{\partial^2}{\partial x_i \partial x_j} \mathcal{R}^{t,x}_s
	- \tfrac{\partial^2}{\partial x_i \partial x_j} R^{t,x}_s \rVert_{\R^d} 
	\\
	& \leq
	2 T^2 
	\sup\nolimits_{(r,y)\in[0,T]\times\R^d} \lVert D^2_x \mathfrak{b}(r, y) \rVert_{L^{(2)}(\R^d, \R^d)} 
	\\
	& \quad \times
	\exp\Bigl( 3T \sup\nolimits_{(r, y) \in [0, T]\times\R^d} 
	\max\!\Big\{ \lVert D_x \mathfrak{b}(r, y) \rVert_{L(\R^d, \R^d)},  
			\lVert D_x b(r, y) \rVert_{L(\R^d, \R^d)} \Big\} \Bigr)
	\\
	& \quad \times
	 \sup\nolimits_{r \in [0, t]} \lVert D_x \mathfrak{b}(t-r, \mathcal{R}^{t,x}_r) - D_x b(t-r, R^{t,x}_r) \rVert_{L(\R^d, \R^d)} 
	\\
	& + 
	T 
	\exp\Bigl( 2T \sup\nolimits_{(r, y) \in [0, T]\times\R^d} \lVert D_x b(r, y) \rVert_{L(\R^d, \R^d)} \Bigr) 
	\\
	& \quad \times
	\sup\nolimits_{r \in [0, t]} \lVert D^2_x \mathfrak{b}(t-r, \mathcal{R}^{t,x}_r) 
	- D^2_x b(t-r, R^{t,x}_r) \rVert_{L^{(2)}(\R^d, \R^d)} 
	\\
	&
	+ T^2 
	\sup\nolimits_{(r,y)\in [0,T]\times \R^d} \lVert D^2_x b(r, y) \rVert_{L^{(2)}(\R^d, \R^d)}
	\\
	& \quad \times \exp\Bigl( 3T \sup\nolimits_{(r, y)\in [0, T]\times\R^d} \lVert D_x \mathfrak{b}(r, y) \rVert_{L(\R^d, \R^d)} \Bigr) 
	\\
	& \quad \times
	\sup\nolimits_{r \in [0, t]} \lVert D_x \mathfrak{b}(t-r, \mathcal{R}^{t,x}_r) 
	- D_x b(t-u, R^{t,x}_u) \rVert_{\R^d} 
	\\
	& 
	+ 
	 \sup\nolimits_{(r, y) \in [0, T]\times\R^d} \lVert D_x \mathfrak{b}(r, y) \rVert_{L(\R^d, \R^d)} 
	\int_0^s 
	\lVert \tfrac{\partial^2}{\partial x_i \partial x_j} 
	\mathcal{R}^{t,x}_r - \tfrac{\partial^2}{\partial x_i \partial x_j} R^{t,x}_r \rVert_{\R^d} \, dr
	\end{split}
	\]
for all $i, j \in \{1, 2, \dots, d\}$, 
$t \in [0, T]$, $s \in [0, t]$ $x \in \R^d$. So it follows from
	Gronwall's integral inequality (cf., e.g., \cite[Lemma~2.11]{grohs2018proof}) that	
	\[
	\begin{split}
	&\lVert 
	\tfrac{\partial^2}{\partial x_i \partial x_j} \mathcal{R}^{t,x}_s
	- \tfrac{\partial^2}{\partial x_i \partial x_j} R^{t,x}_s \rVert_{\R^d} 
	\\& \leq 
	\exp\Bigl(T \sup\nolimits_{(r, y) \in [0, T]\times\R^d} \lVert D_x \mathfrak{b}(r, y) \rVert_{L(\R^d, \R^d)} \Bigr)
	\\
	& \times \Bigg[ 
	2 T^2 
	\sup\nolimits_{(r,y)\in[0,T]\times\R^d} \lVert D^2_x \mathfrak{b}(r, y) \rVert_{L^{(2)}(\R^d, \R^d)} 
	\\
	& \qquad \times 
	 \exp\Bigl( 3T \sup\nolimits_{(r, y) \in [0, T]\times\R^d} 
	\max\!\Big\{ \lVert D_x \mathfrak{b}(r, y) \rVert_{L(\R^d, \R^d)},  
	\lVert D_x b(r, y) \rVert_{L(\R^d, \R^d)} \Big\} \Bigr) 
	\\
	& \qquad \times
	 \sup\nolimits_{s \in [0, t]} \lVert D_x \mathfrak{b}(t-s, \mathcal{R}^{t,x}_s) 
	 - D_x b(t-s, R^{t,x}_s) \rVert_{L(\R^d, \R^d)} 
	\\
	& \quad + 
	T 
	\exp\Bigl( 2T \sup\nolimits_{(r, y) \in [0, T]\times\R^d} \lVert D_x b(r, y) \rVert_{L(\R^d, \R^d)} \Bigr)
	\\
	& \qquad \times
	 \sup\nolimits_{r \in [0, t]} \lVert D^2_x \mathfrak{b}(t-r, \mathcal{R}^{t,x}_r) 
	- D^2_x b(t-r, R^{t,x}_r) \rVert_{L^{(2)}(\R^d, \R^d)} 
	\\
	& \quad
	+ T^2 
	\sup\nolimits_{(r,y)\in [0,T]\times \R^d} \lVert D^2_x b(r, y) 
	\rVert_{L^{(2)}(\R^d, \R^d)} 
	\\
	& \qquad \times \exp\Bigl( 3T \sup\nolimits_{(r, y)\in [0, T]\times\R^d} 
	\lVert D_x \mathfrak{b}(r, y) \rVert_{L(\R^d, \R^d)} \Bigr) 
	\\
	& \qquad \times
	\sup\nolimits_{r \in [0, t]} \lVert D_x \mathfrak{b}(t-r, \mathcal{R}^{t,x}_r) 
	- D_x b(t-r, R^{t,x}_r) \rVert_{\R^d} \Bigg]
	\end{split} 
	\]
for all $i,j \in \{ 1, 2, \dots, d\}$, 
$t \in [0, T]$, $s \in [0, t]$ and $x \in \R^d$. This shows \eqref{le:Y_processes_stability:item3}
and completes the proof of the lemma.	
\end{proof}

\subsection{Differentiability properties of certain random fields defined in terms of
ODEs with additive noise} 
\label{subsec:differentiability}

\begin{lemma} \label{le:V}
Let $T \in (0,\infty)$, $d \in \N$, $\sigma\in \R^{d\times d}$, and consider a 
function $b \in C^{0,2}([0,T]\times\R^d,\R^d)$ with bounded partial derivatives
of first and second order with respect to the $x$-variables. Let
$\varphi \in C^2(\R^d,[0,\infty))$ and
$B \in C^{0,2}([0,T]\times \R^d,\R)$ such that
$\sup_{(t,x)\in [0,T]\times \R^d} \tfrac{B(t,x)}{1+\Vert x \Vert_{\R^d}} <\infty$
and the first and second order partial derivatives of $B$ with respect to the 
$x$-variables are at most polynomially growing. Let
$(\Omega, \mathcal{F}, \P)$ be a probability space supporting a 
standard Brownian motion $U \colon [0,T]\times\Omega\to\R^d$ 
with continuous sample paths. Consider stochastic processes
$R^{t,x} \colon [0,t] \times\Omega \to \R^d$, $t\in [0,T]$, $x \in \R^d$, 
satisfying
	\begin{equation}\label{def:mathcal-Y-smooth-x-V}
	R^{t,x}_s
	=
	x
	+ 
	\int_0^s b(t-r,R^{t,x}_r) \,dr 
	+ 
	\sigma U_r
	\end{equation}
	for all $x\in \R^d$, $t\in[0,T]$ and $s \in [0,t]$. Let the function
	$\mathcal{U} \colon [0,T]\times\R^d\times \Omega \to \R$
be given by
	\be
	\label{mathfrak-V-x-V}
	\begin{split}
	&\mathcal{U} (t,x)
	= \varphi(R^{t,x}_t)\exp\!\Big({\textstyle\int\limits_0^t} B(t-s,R^{t,x}_s)\,ds\Big)
	\end{split}
	\ee
for all $t \in [0,T]$ and $x \in \R^d$.
Then
	\begin{enumerate}[{\rm (i)}] 
	\item\label{V-smooth-smooth-x-V} 
for all $\omega \in \Omega$, the mapping
    $(t,x)\mapsto \mathcal{U}(t,x,\omega)$ is in $C^{0,2}([0,T]\times \R^d,\R)$,  
    \item \label{eq:partial-x-V} $\mbox{}$\\[-13mm]
        \begin{align*}
        & \tfrac{\partial}{\partial x_i} \mathcal{U}(t,x)
        =
        \exp\!\Big({\textstyle\int\limits_0^t} B(t-s,R^{t,x}_s)\,ds\Big)
        \\
        & \times
        \Big[\big[\nabla \varphi (R^{t,x}_t) \big]^T
        \!\big(\tfrac{\partial}{\partial x_i} R^{t,x}_t\big)
        +
        \varphi(R^{t,x}_t)
               \int_0^t \big[ D_x B(t-s, R^{t,x}_s)\big]\!\big(\tfrac{\partial}{\partial x_i} 
        R^{t,x}_s\big)\,ds  \Big]
        \end{align*}
for all $i\in \{1,2,\dots,d\}$, $t \in [0,T]$, $x \in \R^d$, and
    \item\label{eq:partial-x-x-V} $\mbox{}$\\[-9mm]
        \[
        \begin{split}
        \tfrac{\partial^2}{\partial x_i \partial x_j} \mathcal{U}(t,x)
        & =  
        \exp\!\Big({\textstyle\int\limits_0^t} B(t-s,R^{t,x}_s)\,ds\Big)
        \bigg[
        \big[D^2 \varphi(R^{t,x}_t)\big] 
        \!\big(\tfrac{\partial}{\partial x_i} R^{t,x}_t, \tfrac{\partial}{\partial x_j} R^{t,x}_t \big) 
        \\
        & \quad +
        \big[\nabla \varphi(R^{t,x}_t)\big]^T
        \!\big(\tfrac{\partial^2}{\partial x_i \partial x_j}R^{t,x}_t\big)
        \\
        & 
        \quad +
        \big[\nabla \varphi(R^{t,x}_t)\big]^T
        \!\big(\tfrac{\partial}{\partial x_j}R^{t,x}_t\big)
        \int_0^t 
        \big[ D_x B(t-s,R^{t,x}_s)\big]\!\big(\tfrac{\partial}{\partial x_i} R^{t,x}_s\big)\,ds
        \\ 
        & 
        \quad +
        \big[\nabla \varphi(R^{t,x}_t)\big]^T \!\big(\tfrac{\partial}{\partial x_i}R^{t,x}_t\big)
        \int_0^t \big[ D_x B(t-s,R^{t,x}_s)\big]\!\big(\tfrac{\partial}{\partial x_j} R^{t,x}_s\big)\,ds
        \\
        & 
        \quad +
        \varphi(R^{t,x}_t) 
        \int_0^t 
        \big[ D_x B(t-s,R^{t,x}_s)\big]\!\big(\tfrac{\partial}{\partial x_j} R^{t,x}_s\big)\,ds
        \\
        &
        \qquad \times
        \int_0^t 
        \big[ D_x B(t-s,R^{t,x}_s)\big]\!\big(\tfrac{\partial}{\partial x_i} R^{t,x}_s\big)\,ds
        \\
        &
        \quad +
        \varphi(R^{t,x}_t)
        \int_0^t 
        \big[ D^2_x B(t-s,R^{t,x}_s) \big]\!\big(\tfrac{\partial}{\partial x_i} R^{t,x}_s,\tfrac{\partial}{\partial x_j} R^{t,x}_s\big)\,ds
        \\
      & \quad +
   \varphi(R^{t,x}_t)
        \int_0^t 
        \big[ D_x B(t-s,R^{t,x}_s)\big]\!\big(\tfrac{\partial^2}{\partial x_i \partial x_j} R^{t,x}_s\big) ds
        \bigg]
        \end{split}
        \]
        for all $i,j\in \{1,2,\dots,d\}$, $t \in [0,T]$ and $x \in \R^d$.
        \end{enumerate}
\end{lemma}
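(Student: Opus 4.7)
The plan is to reduce everything to the chain and product rules, once the $C^{0,2}$ regularity of $(s,x)\mapsto R^{t,x}_s(\omega)$ from \cref{le:Y-differentiability-properties} is in hand. Write $\mathcal{U}(t,x) = \varphi(R^{t,x}_t) \, e^{I(t,x)}$, where
\[
I(t,x) := \int_0^t B(t-s, R^{t,x}_s)\,ds.
\]
The main work lies in establishing the $C^{0,2}$-regularity of $I(\cdot,\cdot)$ in $(t,x)$ and computing its partial derivatives with respect to the $x$-variables; once this is done, (ii) and (iii) follow from standard product-rule computations and (i) follows from continuity of all ingredients.

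First, I would fix $\omega \in \Omega$ and use \cref{le:Y-differentiability-properties} to assert that $(s,x)\mapsto R^{t,x}_s(\omega)$ is $C^{0,2}$, with uniformly (in $x\in\R^d$, $s\in[0,t]$, $t\in[0,T]$) bounded first and second partial derivatives with respect to $x$. To differentiate $I(t,x)$ under the integral sign in the $x$-variables, I would apply the dominated convergence theorem: the integrand $B(t-s, R^{t,x}_s)$ is $C^2$ in $x$ by the chain rule, and, by the assumption that $D_x B$ and $D^2_x B$ are at most polynomially growing together with \cref{le:Y-differentiability-properties}.\eqref{eq:delaVaPou-x-cY}--\eqref{eq:delaVaPou-x-x-cY} and \cref{le:sde-solution-exponential-integrability}, the relevant derivatives are locally bounded uniformly in $(s, x)$ on compact $x$-sets, giving an integrable dominating function. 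This yields
\[
\tfrac{\partial}{\partial x_i} I(t,x) = \int_0^t [D_x B(t-s, R^{t,x}_s)](\tfrac{\partial}{\partial x_i} R^{t,x}_s)\,ds
\]
and an analogous expression for $\tfrac{\partial^2}{\partial x_i \partial x_j} I(t,x)$ obtained by differentiating once more and applying the product and chain rules inside the integrand.

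Next, I would apply the product rule to $\mathcal{U}(t,x) = \varphi(R^{t,x}_t) \, e^{I(t,x)}$. The $x_i$-derivative of $\varphi(R^{t,x}_t)$ is $[\nabla\varphi(R^{t,x}_t)]^T (\tfrac{\partial}{\partial x_i} R^{t,x}_t)$ by the chain rule, and the $x_i$-derivative of $e^{I(t,x)}$ is $e^{I(t,x)}\,\tfrac{\partial}{\partial x_i} I(t,x)$. Multiplying out and using the derivative formula above for $I$ gives exactly \eqref{eq:partial-x-V}, establishing (ii). For (iii), I would differentiate \eqref{eq:partial-x-V} once more, applying the product rule to each of the two terms and the chain rule to $\varphi(R^{t,x}_t)$, $\nabla\varphi(R^{t,x}_t)$, $D_x B(t-s, R^{t,x}_s)$; the seven summands in \eqref{eq:partial-x-x-V} correspond to the seven terms produced this way. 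For (i), joint continuity of $(t,x)\mapsto \mathcal{U}(t,x,\omega)$ and its first and second $x$-derivatives follows because $(t,s,x)\mapsto R^{t,x}_s(\omega)$ and the derivatives $\tfrac{\partial}{\partial x_i} R^{t,x}_s$, $\tfrac{\partial^2}{\partial x_i \partial x_j} R^{t,x}_s$ are jointly continuous (by the integral equations in \cref{le:Y-differentiability-properties}), and, by the dominated convergence theorem applied with the same polynomial-type bounds together with continuity of $t\mapsto R^{t,x}_t$, the integrals defining $I$ and its $x$-derivatives are jointly continuous in $(t,x)$.

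The main obstacle is the careful justification of the interchange of differentiation and integration in $I(t,x)$, which has to simultaneously accommodate the $t$-dependence in the upper limit of integration, the $t$-dependence in the integrand through $B(t-s,\cdot)$ and through $R^{t,x}_s$, and the at-most-polynomial growth of the derivatives of $B$ composed with the process $R^{t,x}$. This is handled by combining the uniform local bounds from \cref{le:Y-differentiability-properties} on the derivatives of $R^{t,x}_s$ with a Vitali-type argument, using the pointwise continuity of the integrands together with the existence of an $L^1$-dominant coming from the exponential integrability of $\sup_{t,s,x}\|R^{t,x}_s\|_{\R^d}^p$ on compact $x$-sets, which is provided by \cref{le:sde-solution-exponential-integrability}.\eqref{it:sde-solution-exponential-integrability:item2}.
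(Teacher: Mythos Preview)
Your approach is essentially the same as the paper's: invoke \cref{le:Y-differentiability-properties} for the $C^{0,2}$ regularity and derivative bounds of $(s,x)\mapsto R^{t,x}_s(\omega)$, then apply the chain and product rules (with differentiation under the integral sign) to obtain \eqref{eq:partial-x-V} and \eqref{eq:partial-x-x-V}. The paper's proof is terser but follows the same line.

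One minor misattribution: your repeated appeals to \cref{le:sde-solution-exponential-integrability} are out of place here. That lemma gives \emph{moment} bounds of the form $\E[\exp(p\sup\|R^{t,x}_s\|)]<\infty$, i.e., integrability over $\Omega$. But \cref{le:V} is a purely pathwise statement (fixed $\omega$), and the integral $I(t,x)=\int_0^t B(t-s,R^{t,x}_s)\,ds$ is over $s\in[0,t]$, not over $\Omega$. What you actually need for the dominating function is that, for fixed $\omega$ and $x$ in a compact set $K$, $\sup_{s\in[0,t]}\sup_{x\in K}\|R^{t,x}_s(\omega)\|<\infty$; this follows directly from the joint continuity in \cref{le:Y-differentiability-properties}.\eqref{item:Y-C02} on the compact set $[0,t]\times K$. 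Combined with the polynomial growth of $D_xB$, $D_x^2B$ and the uniform bounds on $\tfrac{\partial}{\partial x_i}R^{t,x}_s$, $\tfrac{\partial^2}{\partial x_i\partial x_j}R^{t,x}_s$ from \cref{le:Y-differentiability-properties}.\eqref{eq:delaVaPou-x-cY} and \eqref{eq:delaVaPou-x-x-cY}, this gives the integrable dominant over $[0,t]$ you need. Replace the references to \cref{le:sde-solution-exponential-integrability} with this observation and the argument is complete.
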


\begin{proof}[Proof]
It follows from \cref{le:Y-differentiability-properties}.\eqref{item:Y-C02},
our assumptions on $b$, $\varphi$ and $B$, the chain rule, the fundamental theorem of calculus, 
\eqref{def:mathcal-Y-smooth-x-V} and \eqref{mathfrak-V-x-V} that for all
$t \in [0, T]$ and $\omega \in \Omega$, the mapping 
$x \mapsto \mathcal{U}(t,x,\omega)$ belongs to $C^1(\R^d, \R)$
and
\[
    \begin{split}
    \tfrac{\partial}{\partial x_i} \mathcal{U}(t,x)
    &=
    \big[\nabla \varphi(R^{t,x}_t)\big]^T \!\big(\tfrac{\partial}{\partial x_i}R^{t,x}_t\big)
    \exp\!\Big({\textstyle\int\limits_0^t} B(t-s,R^{t,x}_s)\,ds\Big)\\
    & 
    +
    \varphi(R^{t,x}_t)
    \exp\!\Big({\textstyle\int\limits_0^t} B(t-s,R^{t,x}_s)\,ds\Big)
    \int_0^t \big[ D_x B(t-s, R^{t,x} _s)\big]\!\big(\tfrac{\partial}{\partial x_i} R^{t,x}_s\big) \,ds
    \end{split}
    \]
  for all $i \in \{1, 2, \dots, d\}$, $t \in [0, T]$ and $x \in \R^d$.
  This shows \eqref{eq:partial-x-V}.

Similarly, it follows from \cref{le:Y-differentiability-properties}.\eqref{item:Y-C02},
\eqref{eq:partial-x-V}, and the assumptions that for all
$t \in [0, T]$ and $\omega \in \Omega$, the mapping
$x \mapsto \mathcal{U}(t,x,\omega)$ is in $C^2(\R^d, \R)$ and
    \[    
    \begin{split}
    &
    \exp\!\Big(\!-{\textstyle\int\limits_0^t} B(t-s,R^{t,x}_s)\,ds\Big)
    \tfrac{\partial^2}{\partial x_i \partial x_j} \mathcal{U}(t,x)\\
    &=
    \big[D^2 \varphi (R^{t,x}_t)\big] 
    \!\big(\tfrac{\partial}{\partial x_i}R^{t,x}_t, \tfrac{\partial}{\partial x_j}R^{t,x}_t \big) 
    +
    \big[\nabla \varphi (R^{t,x}_t)\big]^T \big(\tfrac{\partial^2}{\partial x_i \partial x_j}R^{t,x}_t\big)
    \\ 
    & 
    +
    \big[\nabla \varphi (R^{t,x}_t) \big]^T
    \!\big(\tfrac{\partial}{\partial x_j} R^{t,x}_t\big)
    \int_0^t 
    \big[ D_x B(t-s,R^{t,x}_s)\big]\!\big(\tfrac{\partial}{\partial x_i} R^{t,x}_s\big)\,ds
\\ 
    & 
    +
    \big[\nabla \varphi (R^{t,x}_t)\big]^T \big(\tfrac{\partial}{\partial x_i}R^{t,x}_t\big)
    \int_0^t \big[ D_x B(t-s,R^{t,x}_s)\big]\!\big(\tfrac{\partial}{\partial x_j} R^{t,x}_s\big)\,ds
    \\
    & 
    +
    \varphi(R^{t,x}_t) 
    \int_0^t 
    \big[ D_x B(t-s,R^{t,x}_s)\big]\!\big(\tfrac{\partial}{\partial x_j} R^{t,x}_s\big)\,ds
    \int_0^t 
    \big[ D_x B(t-s,R^{t,x}_s)\big]\!\big(\tfrac{\partial}{\partial x_i} R^{t,x}_s\big)\,ds
    \\
    &
    +
    \varphi(R^{t,x}_t) 
    \int_0^t 
    \big[ D^2_x B(t-s,R^{t,x}_s) \big]\!\big(\tfrac{\partial}{\partial x_i} R^{t,x}_s,\tfrac{\partial}{\partial x_j} R^{t,x}_s\big)\,ds
    \\
& 
    + 
    \varphi(R^{t,x}_t) 
    \int_0^t
    \big[ D_x B(t-s,R^{t,x}_s)\big]\!\big(\tfrac{\partial^2}{\partial x_i \partial x_j} R^{t,x}_s\big) \,ds
\end{split}
\]
for all $i,j\in \{1,2,\dots,d\}$, $t \in [0,T]$ and $x \in \R^d$.
This shows \eqref{V-smooth-smooth-x-V} and \eqref{eq:partial-x-x-V}
and completes the proof of the lemma.
\end{proof}

\begin{lemma}
\label{le:Zakai-smooth-x}
Let $T \in (0,\infty)$, $d\in \N$, $\sigma \in \R^{d\times d}$, and consider a 
function $b \in C^{0,2}([0,T]\times\R^d,\R^d)$ with bounded partial derivatives
of first and second order with respect to the $x$-variables. Let
$\varphi \in C^2(\R^d,[0,\infty))$ and $B \in C^{0,2}([0,T]\times \R^d,\R)$ 
have at most polynomially growing partial derivatives of first and 
second order with respect to the $x$-variables and assume that 
		$\sup_{(t,x)\in [0,T]\times \R^d} \frac{B(t,x)}{1+\lVert x \rVert_{\R^d}} 
		 <\infty$.
Let $(\Omega, \mathcal{F}, \P)$ be a probability space supporting a 
standard Brownian motion $U \colon [0,T]\times\Omega\to\R^d$ 
with continuous sample paths. Consider stochastic processes
$R^{t,x} \colon[0,t]\times\Omega \to \R^d$, 
	$t\in [0,T]$, $x \in \R^d$, satisfying
		\[ \label{def:mathcal-Y-smooth-x}
		R^{t,x}_s
		=
		x
		+ 
		\int_0^s
		b(t-r ,R^{t,x}_r) \,dr 
		+ 
		\sigma U_s \quad \mbox{for all
		$t \in [0,T]$, $s \in [0,t]$ and $x \in \R^d$.}
		\]
	Let the functions $\mathcal{U} \colon [0,T] \times\R^d\times \Omega \to \R$ and 
	$u \colon [0,T] \times\R^d \to \R$ be given by 
		\[
		\label{eq:mathfrak-V-x}
		\begin{split}
		&\mathcal{U}(t,x)
		= \varphi(R^{t,x}_t)\exp\!\Big({\textstyle\int\limits_0^t} B(t-s,R^{t,x}_s)\,ds\Big) 
		\quad \mbox{and} \quad u(t,x) = \E\big[\mathcal{U}(t,x)\big]
		\end{split}
		\]
		for $x\in \R^d$ and $t\in[0,T]$.
	Then
		\begin{enumerate}[{\rm (i)}]
		\item\label{mathfrak-v-smooth-smooth-x}
			$u \in C^{0,2}([0,T]\times \R^d,\R)$,
		\item\label{mathfrak-v-Dx}
			$\tfrac{\partial}{\partial x} u(t,x) = \E\big[\tfrac{\partial}{\partial x} \mathcal{U}
			(t,x)\big]$
		for all $t \in [0,T]$, $x \in \R^d$, and
		\item\label{mathfrak-v-D-2-xx}
			$\tfrac{\partial^2}{\partial x^2} u(t,x)
			= \E\big[\tfrac{\partial^2}{\partial x^2} \mathcal{U}(t,x)\big]$
	for all $t\in [0,T]$ and $x \in \R^d$.
		\end{enumerate}
\end{lemma}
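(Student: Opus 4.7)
The plan is to derive \eqref{mathfrak-v-Dx} and \eqref{mathfrak-v-D-2-xx} by differentiation under the expectation, and then to obtain the continuity claim in \eqref{mathfrak-v-smooth-smooth-x} by dominated convergence applied to the explicit formulas in \cref{le:V}. First I would use \cref{le:V}.\eqref{eq:partial-x-V}--\eqref{eq:partial-x-x-V} to write $\tfrac{\partial}{\partial x_i} \mathcal{U}(t,x)$ and $\tfrac{\partial^2}{\partial x_i \partial x_j} \mathcal{U}(t,x)$ as sums of products of (a) the exponential factor $\exp({\int_0^t} B(t-s,R^{t,x}_s)\,ds)$, (b) at most polynomially growing expressions evaluated at $R^{t,x}_s$, $s \in [0,t]$, namely $\varphi$, $\nabla \varphi$, $D^2\varphi$, $D_x B$, $D^2_x B$, and (c) the spatial derivatives $\tfrac{\partial}{\partial x_i} R^{t,x}_s$ and $\tfrac{\partial^2}{\partial x_i \partial x_j} R^{t,x}_s$, which are bounded uniformly in $(s,t,x)$ by \cref{le:Y-differentiability-properties}.\eqref{eq:delaVaPou-x-cY}--\eqref{eq:delaVaPou-x-x-cY}.

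Next I would construct integrable dominating functions on each strip $[0,T] \times [-p,p]^d$. The growth hypothesis $\sup_{(t,x) \in [0,T]\times\R^d} B(t,x)/(1 + \|x\|_{\R^d}) < \infty$ ensures that the exponential factor is pointwise bounded by $\exp(CT(1 + \sup_{s \in [0,t]} \|R^{t,x}_s\|_{\R^d}))$ for some $C \in (0,\infty)$ depending only on $B$ and $T$. Combining this with the polynomial growth of $\varphi, \nabla \varphi, D^2\varphi, D_x B, D^2_x B$ and the uniform bounds from \cref{le:Y-differentiability-properties} on the factors in (c), each term appearing in $\tfrac{\partial}{\partial x_i}\mathcal{U}(t,x)$ and in $\tfrac{\partial^2}{\partial x_i \partial x_j} \mathcal{U}(t,x)$ is dominated on $[0,T]\times [-p,p]^d$ by a polynomial in $\sup_{t \in [0,T], s \in [0,t], x \in [-p,p]^d} \|R^{t,x}_s\|_{\R^d}$ times an exponential of a constant multiple of the same supremum. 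By \cref{le:sde-solution-exponential-integrability}.\eqref{it:sde-solution-exponential-integrability:item2}, any such random variable is integrable, which provides the required uniform dominator.

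With an integrable dominator in hand, \eqref{mathfrak-v-Dx} follows from the standard differentiation-under-the-integral argument: given $(t,x)$ and a unit vector $e_i$, write $h^{-1}[\mathcal{U}(t,x+he_i) - \mathcal{U}(t,x)] = \tfrac{\partial}{\partial x_i}\mathcal{U}(t,x + \theta_h e_i)$ for some $\theta_h \in (0,h)$ using the sample-wise $C^{0,2}$ regularity from \cref{le:V}.\eqref{V-smooth-smooth-x-V}, then pass to the limit as $h \to 0$ by dominated convergence. The analogous step, applied to $\tfrac{\partial}{\partial x_i} \mathcal{U}$, yields \eqref{mathfrak-v-D-2-xx}. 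For the joint continuity in \eqref{mathfrak-v-smooth-smooth-x}, I would take a sequence $(t_n, x_n) \to (t_0, x_0)$ and invoke dominated convergence once more: the continuous dependence of $R^{t,x}_s(\omega)$ on $(t,x)$ (which follows from \cref{le:Y_processes_stability}.\eqref{le:Y_processes_stability:item1} applied with drifts $b(t_n - \cdot, \cdot)$ and $b(t_0 - \cdot, \cdot)$ together with uniform continuity of $b$ on compacta) and the continuity of $\varphi, B$ and their first two $x$-derivatives give $\omega$-wise convergence of the integrands, while the dominators constructed above remain valid on a neighbourhood of $(t_0, x_0)$.

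The main obstacle is the uniform integrable domination in the second paragraph: each derivative of $\mathcal{U}$ comes with the exponential factor $\exp(\int_0^t B(t-s,R^{t,x}_s)\,ds)$ multiplied by polynomial expressions in $R^{t,x}_s$, and the only way to absorb such an integrand into an $L^1$ bound uniformly over $(t,x)$ in a compact set is to combine the at most linear growth of $B$ in $x$ with the exponential moment estimate of \cref{le:sde-solution-exponential-integrability}.\eqref{it:sde-solution-exponential-integrability:item2}. A secondary technical point is the joint continuity of the map $[0,T]\times \R^d \times [0,T] \ni (t,x,s) \mapsto R^{t,x}_s(\omega)$ (with the convention that $R^{t,x}_s = R^{t,x}_t$ for $s \ge t$), which is needed for the last step but reduces, via \cref{le:Y_processes_stability}, to standard continuous-dependence-on-parameters arguments for the ODE \eqref{def:mathcal-Y-smooth-x-V}.
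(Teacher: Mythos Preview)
Your proposal is correct and follows essentially the same route as the paper: both arguments use the explicit derivative formulas from \cref{le:V}, the uniform bounds on $\partial_{x_i} R^{t,x}_s$ and $\partial^2_{x_i x_j} R^{t,x}_s$ from \cref{le:Y-differentiability-properties}, and the exponential moment estimate of \cref{le:sde-solution-exponential-integrability} to control the product of the exponential factor with the polynomially growing pieces. The only cosmetic difference is that the paper packages the integrability as uniform $L^p$-bounds and then invokes the de~la~Vall\'ee~Poussin/Vitali convergence theorems together with the fundamental theorem of calculus, whereas you build a single integrable dominator on each compact slab and apply dominated convergence directly; either device yields the same conclusion. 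One small simplification: for the joint continuity in \eqref{mathfrak-v-smooth-smooth-x} you do not need to go through \cref{le:Y_processes_stability}, since \cref{le:V}.\eqref{V-smooth-smooth-x-V} already gives you $\omega$-wise continuity of $(t,x)\mapsto \mathcal{U}(t,x,\omega)$ and its $x$-derivatives, and your dominator then finishes the job.
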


\begin{proof}[Proof]
Since, by assumption, $\varphi \in C^2(\R^d,[0,\infty))$ has at most polynomially growing partial 
derivatives of first and second order, we obtain from
\cref{le:sde-solution-exponential-integrability} that 
	\begin{multline} \label{eq:Zakai-smooth-x:phi:polynomial}
	\E\Bigl[ 
		\sup\nolimits_{t \in [0, T]}
		\sup\nolimits_{s \in [0, t]}
		\sup\nolimits_{x \in [-p, p]^d} 
		\bigl( 
			|\varphi(R^{t,x}_s)|^p 
			+ \lVert \nabla \varphi(R^{t,x}_s)\rVert_{L(\R^d, \R)}^p
			\\
			+ \lVert D^2 \varphi (R^{t,x}_s)\rVert_{L^{(2)}(\R^d, \R)}^p 
		\bigr) 
		\Bigr] < \infty
	\end{multline}
	for all $p \in (0, \infty)$.
	Similarly, it follows from the assumption 
	that $B \in C^{0,2}([0,T]\times \R^d,\R)$ has at most polynomially growing 
	partial derivatives of first and second order with respect to the $x$-variables and
	\cref{le:sde-solution-exponential-integrability} that
	\begin{multline} \label{eq:Zakai-smooth-x:B:polynomial}
	\E\Bigl[ 
	\sup\nolimits_{t \in [0, T]}
	\sup\nolimits_{s \in [0, t]}
	\sup\nolimits_{x \in [-p,p]^d} 
	\bigl( 
	| B(t-s, R^{t,x}_s) |^p 
	+
	\lVert D_x B(t-s, R^{t,x}_s)\rVert_{L(\R^d, \R)}^p
	\\
	+ \lVert D^2_x B(t-s, R^{t,x}_s)\rVert_{L^{(2)}(\R^d, \R)}^p 
	\bigr) \Bigr] < \infty
	\end{multline}
for all $p \in (0, \infty)$. The assumption that
$\sup_{(t,x)\in [0,T]\times \R^d} \frac{B(t,x)}{1+\Vert x \Vert_{\R^d}}  < \infty$
and \cref{le:sde-solution-exponential-integrability} guarantee
that	\be \label{eq:Zakai-smooth-x:B:integral}
	\E\Bigl[ 
	\sup\nolimits_{t \in [0, T]}
	\sup\nolimits_{s \in [0, t]}
	\sup\nolimits_{x \in [-p,p]^d} 
	\bigl| \exp\!\big( {\textstyle \int\limits_0^s} B(t-r, R^{t,x}_r)\,dr \big) \bigr|^p \Bigr] < \infty
	\ee
	for all $p \in (0,\infty)$.
Combining \eqref{eq:Zakai-smooth-x:phi:polynomial}--\eqref{eq:Zakai-smooth-x:B:integral}
with H\"older's inequality shows that 
\[
\E \bigg[ \sup_{t \in [0, T]} \sup_{x \in \R^d} | \mathcal{U}(t, x) |^p \bigg] < \infty \quad 
\mbox{for all } p \in (0, \infty),
\]
which together with the de la Vall\'ee Poussin theorem (cf., e.g., \cite[Corollary~6.21]{Klenke_2014}),
the Vitali convergence theorem (cf., e.g., \cite[Theorem~6.25]{Klenke_2014}), and
\cref{le:V}.\eqref{V-smooth-smooth-x-V} implies that $u \in C([0,T]\times\R^d,\R)$.
Next, we note that \eqref{eq:Zakai-smooth-x:phi:polynomial}--\eqref{eq:Zakai-smooth-x:B:integral},    \cref{le:V}.\eqref{eq:partial-x-V}, \cref{le:Y-differentiability-properties}.\eqref{eq:delaVaPou-x-cY}
and H\"older's inequality yield that 
	\[
	\E\Bigl[ 
	\sup\nolimits_{t \in [0, T]} 
	\sup\nolimits_{x \in [-p,p]^d}
	\lVert \tfrac{\partial}{\partial x} \mathcal{U}(t, x) \rVert_{L(\R^d, \R)}^p 
	\Bigr] 
	< \infty
	\]
	for all $p \in (0, \infty)$. Therefore, one obtains from
\cref{le:V}.\eqref{V-smooth-smooth-x-V},
the de la Vall\'ee Poussin theorem (cf., e.g., \cite[Corollary~6.21]{Klenke_2014}),
the Vitali convergence theorem (cf., e.g., \cite[Theorem~6.25]{Klenke_2014}), 
and
the fundamental theorem of calculus that

\begin{enumerate}[(a)]
	\item\label{eq:v-C-0-1-Zakai} 
for all $t \in [0,T]$, the mapping
$x \mapsto u(t,x)$ is in $C^1(\R^d,\R)$,
	\item \label{eq:v-C-0-1-b-Zakai}
for all $i \in \{1,2,\dots,d\}$, the mapping
$(t,x) \mapsto \tfrac{\partial}{\partial x_i} u(t,x)$
is in $C([0,T]\times \R^d,\R)$, and
\item \label{eq:v-C-0-1-x} 
$\tfrac{\partial}{\partial x} u(t,x)=\E[\tfrac{\partial}{\partial x} 
\mathcal{U}(t,x)]$
for all $x\in \R^d$ and $t \in [0,T]$.
\end{enumerate}
This establishes \eqref{mathfrak-v-Dx}. 
	
Next, note that it follows from \eqref{eq:Zakai-smooth-x:phi:polynomial}--\eqref{eq:Zakai-smooth-x:B:integral}, \cref{le:V}.\eqref{eq:partial-x-x-V}, items~\eqref{eq:delaVaPou-x-cY} and \eqref{eq:delaVaPou-x-x-cY} of \cref{le:Y-differentiability-properties} and H\"older's inequality that
	\[
	\E\biggl[ 
		\sup_{t \in [0,T]} 
		\sup_{x \in [-p,p]} 
		\lVert \tfrac{\partial^2}{\partial x^2} \mathcal{U}(t, x) \rVert_{L^{(2)}(\R^d, \R)}^p 
	\biggr] < \infty
	\]
for all $p \in (0, \infty)$. Hence, one obtains from 
\cref{le:V}.\eqref{V-smooth-smooth-x-V}, \eqref{eq:v-C-0-1-Zakai}--\eqref{eq:v-C-0-1-b-Zakai} above,
the de la Vall\'ee Poussin theorem (cf., e.g., \cite[Corollary~6.21]{Klenke_2014}),
the Vitali convergence theorem (cf., e.g., \cite[Theorem~6.25]{Klenke_2014}),
and the fundamental theorem of calculus that
\begin{enumerate}[(A)]
	\item\label{eq:v-C-0-2-Zakai} 
	for all $t \in [0,T]$, the mapping 
		$x \mapsto u(t,x)$ is in $C^2(\R^d,\R)$,
	\item \label{eq:v-C-0-2-b-Zakai}
	for all $i,j \in \{1, 2, \dots, d\}$, the mapping
		$(t,x)\mapsto \tfrac{\partial^2}{\partial x_i  \partial x_j} u(t,x)\in \R$
		is in $C([0,T]\times \R^d,\R)$,
	\item \label{eq:v-C-0-2-x} 
		$\tfrac{\partial^2}{\partial x^2} u(t,x)
		=\E[\tfrac{\partial^2}{\partial x^2} \mathcal{U}(t,x)]$ for all $x\in \R^d$  and
		$t \in [0,T]$.
\end{enumerate}
\eqref{eq:v-C-0-2-x} directly establishes \eqref{mathfrak-v-D-2-xx}. 
Moreover, \eqref{eq:v-C-0-1-Zakai}--\eqref{eq:v-C-0-1-x}, 
\eqref{eq:v-C-0-2-Zakai}--\eqref{eq:v-C-0-2-x}, and the fact 
that $u \in C([0,T] \times \R^d, \R)$ 
imply \eqref{mathfrak-v-smooth-smooth-x}, which completes the proof of the lemma.
\end{proof}

\subsection{Feynman--Kac representations for linear PDEs} \label{subsec:FK}

The following lemma is a stepping stone towards the proof of the Feynman--Kac 
representation in \cref{prop:Zakai-approx} below. It makes stronger regularity 
assumptions on the coefficients.
\cref{prop:Zakai-approx} can be derived from it by mollifying the coefficients.

\begin{lemma} \label{le:Zakai-Smooth}
	Let $T \in (0,\infty)$, 
	$d \in \N$, $\sigma \in \R^{d\times d}$, and consider a function
	$b \in C^{1,2}([0,T]\times\R^d,\R^d)$ with a bounded partial derivative
	with respect to $t$ and bounded partial derivatives of first and second order with 
	respect to the $x$-variables. Let $\varphi \in C^3(\R^d,[0,\infty))$ have at most 
	polynomially growing partial derivatives of first, second and third order.
	Let $B \in C^{1,2}([0,T]\times \R^d,\R)$ have an at most polynomially 
	growing partial derivative with respect to $t$ and at most polynomially 
	growing partial derivatives of first and 
	second order with respect to the $x$ variables. In addition, assume that 
	$\sup_{(t,x)\in [0,T]\times \R^d} \frac{B(t,x)}{1+\Vert x \Vert_{\R^d}} < \infty$,
	and let $(\Omega, \mathcal{F}, \P)$ be a probability space supporting a 
	standard Brownian motion $U \colon [0,T]\times\Omega\to\R^d$ 
	with continuous sample paths. Consider stochastic processes
	$R^{t,x}\colon[0,t]\times\Omega \to \R^d$, $t\in [0,T]$, $x \in \R^d$, 
	satisfying
	\begin{equation}\label{def:mathcal-Y-Smooth}
	R^{t,x}_s
	=
	x
	+ 
	\int_0^s
	b(t-r,R^{t,x}_r) \,dr 
	+ 
	\sigma U_s \quad \mbox{for all 
	$t\in[0,T]$, $s \in [0,t]$ and $x \in \R^d$,}
	\end{equation}
	and let the mappings $\mathcal{U} \colon[0,T]\times \R^d \times \Omega \to \R$,
	$u \colon [0,T]\times\R^d \to \R$ be given by
	\begin{equation}\label{mathfrak-V-Smooth}
	\mathcal{U}(t,x)=\varphi(R^{t,x}_t)\exp\!\Big({\textstyle\int\limits_0^t} B(t-s,R^{t,x}_s)\,ds\Big) \, \mbox{ and } \, u(t,x) = \E\big[\mathcal{U}(t,x)\big] 
	\end{equation}
for $t \in [0,T]$ and $x \in \R^d$.
	Then
	\begin{enumerate}[{\rm (i)}]
		\item\label{mathfrak-v-Smooth} 
		$u \in C^{1,2}([0,T]\times \R^d,\R)$, and
		\item \label{PDE-Smooth-item}
		$\mbox{}$\\[-8.6mm]
		\[ \label{PDE-Smooth}
		\begin{split}
		&u(t,x)
		\\
		&=
		\varphi(x)
		+ 
		\int_0^t
		\bigg[ 
		\tfrac{1}{2} \operatorname{Trace}_{\R^d}\!\big( \sigma\sigma^{T}\operatorname{Hess}_x (u)(s,x) \big)
		+
		\left\langle b(s,x), \nabla_x u(s,x)\right\rangle_{\Rd}
		+ B(s,x) u(s,x)
		\bigg] \,ds
		\end{split}
		\]
		for all $t \in [0,T]$ and $x \in \R^d$.
	\end{enumerate}
\end{lemma}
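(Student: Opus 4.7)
Lemma~\ref{le:Zakai-smooth-x} already delivers $u \in C^{0,2}([0,T]\times\R^d,\R)$ under the present (stronger) hypotheses, so the only remaining tasks are to produce a continuous $\partial_t u$ and to derive the integral PDE. The plan is to combine a Markov/semigroup identity for $u$ with an Itô expansion that differentiates $u$ only in the $x$-variables, so that the $C^{0,2}$-regularity already in hand is sufficient to execute the computation.

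\textbf{Semigroup identity.} For $t \in [0,T)$, $h \in (0, T-t]$ and $x \in \R^d$, the first step is to prove
\[
u(t+h, x) = \E\!\left[ u\bigl(t, R^{t+h, x}_h\bigr) \exp\!\Big( {\textstyle\int_0^h} B(t+h-s, R^{t+h, x}_s)\,ds \Big)\right].
\]
I derive this by splitting $\int_0^{t+h} = \int_0^h + \int_h^{t+h}$ in the exponent, substituting $s = r + h$ in the second integral, and observing that the shifted process $\tilde R_r := R^{t+h, x}_{r+h}$, $r \in [0,t]$, satisfies the same SDE as $R^{t,\cdot}$ (with coefficient $b(t-\cdot,\cdot)$), driven by the $\F_h$-independent Brownian motion $(U_{\cdot+h}-U_h)$ and started from the $\F_h$-measurable random point $R^{t+h,x}_h$. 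Strong uniqueness for this Lipschitz SDE and the tower property then yield the identity.

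\textbf{Itô expansion on $[0,h]$.} Applying Itô's formula to
\[
\Xi_s := u\bigl(t, R^{t+h,x}_s\bigr) \exp\!\Big({\textstyle\int_0^s} B(t+h-r, R^{t+h,x}_r)\,dr\Big), \quad s \in [0,h],
\]
(differentiating $u$ only in the $x$-variables) writes $\Xi_h - \Xi_0$ as the $s$-integral of the drift
\[
\exp\!\Big({\textstyle\int_0^s} B(t+h-r, R^{t+h,x}_r)\,dr\Big) \Big[\tfrac12 \operatorname{Trace}_{\R^d}(\sigma\sigma^T \operatorname{Hess}_x u)(t, R^{t+h,x}_s) + \langle b(t+h-s, R^{t+h,x}_s), \nabla_x u(t, R^{t+h,x}_s)\rangle + B(t+h-s, R^{t+h,x}_s)\, u(t, R^{t+h,x}_s)\Big]
\]
plus a stochastic integral. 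Taking expectation, using the semigroup identity to recognise $\E[\Xi_h] = u(t+h, x)$ and $\Xi_0 = u(t, x)$, dividing by $h$, and sending $h \downarrow 0$ then yields
\[
\lim_{h \downarrow 0} \frac{u(t+h, x) - u(t, x)}{h} = \tfrac12 \operatorname{Trace}_{\R^d}\bigl(\sigma\sigma^{T} \operatorname{Hess}_x u(t, x)\bigr) + \langle b(t, x), \nabla_x u(t, x)\rangle + B(t, x) u(t, x).
\]
A symmetric argument on intervals of the form $[t-h, t]$ gives the matching left derivative.

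\textbf{Main obstacle and conclusion.} The delicate point is the $h \downarrow 0$ passage, which hinges on (i) the stochastic-integral piece being a true martingale (so that its expectation drops), and (ii) uniform-in-$h$ integrability of the drift integrand, both in spite of the exponential weight $\exp(\int_0^s B\,dr)$. Here the hypothesis $\sup_{(t,x)} B(t,x)/(1 + \|x\|_{\R^d}) < \infty$ is exactly what is needed: combined with the exponential-moment bounds of Lemma~\ref{le:sde-solution-exponential-integrability} for $\sup_s \|R^{t+h,x}_s\|_{\R^d}$, it tames the exponential factor, while the polynomial growth of $u,\nabla_x u,\operatorname{Hess}_x u$ (obtained by propagating the polynomial growth of $\nabla\varphi,D^2\varphi,D_x B, D^2_x B$ through the formulas of Lemma~\ref{le:Zakai-smooth-x}) together with H\"older's inequality handles the remaining factors; the $h$-stability from Lemma~\ref{le:Y_processes_stability} secures the pointwise convergence at $s = 0$. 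Once the limit is established, the right-hand side is manifestly continuous in $(t,x)$, so $\partial_t u$ is continuous and $u \in C^{1,2}([0,T]\times\R^d,\R)$. Since $u(0,x) = \varphi(x)$ is immediate from the definition (as $R^{0,x}_0 = x$ and the time integral vanishes), integrating $\partial_t u(\cdot,x)$ from $0$ to $t$ produces the integral form of the PDE.
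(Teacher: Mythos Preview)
Your approach is correct and, in fact, cleaner than the paper's. The paper first establishes $u\in C^{1,2}$ by differentiating the representation $u(t,x)=\E[\mathcal V(t,x)]$ directly in $t$: it computes $\partial_t R^{t,x}_s$ from the flow (this is where the bounded $\partial_t b$ hypothesis enters), then writes out $\partial_t\mathcal V$ termwise (this is where $\partial_t B$ and $D^3\varphi$ are used), and only after having $u\in C^{1,2}$ in hand applies It\^o's formula to $u(t-\delta,R^{t,x}_\delta)\exp(\int_0^\delta B)$, which genuinely needs $\partial_t u$ because the time argument of $u$ moves. Your trick of freezing the first argument of $u$ at $t$ in the It\^o expansion sidesteps all of this: only $u(t,\cdot)\in C^2$ is required, and that is already supplied by \cref{le:Zakai-smooth-x}. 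Consequently your argument never touches $\partial_t b$, $\partial_t B$ or $D^3\varphi$; it would in fact prove \cref{prop:Zakai-approx} directly, without the H\"older-in-$t$ assumptions and without the mollification layer the paper builds in \cref{subsec:approximation_and_mollification}.

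Two small corrections that do not affect the logic. First, $u$, $\nabla_x u$ and $\operatorname{Hess}_x u$ are not polynomially growing in the space variable: because of the factor $\exp(\int_0^t B)$ with $B(t,x)\le c(1+\|x\|)$, the bound you actually get (via \cref{le:sde-solution-exponential-integrability}) is of the form $|\nabla_x u(t,y)|\le C e^{C\|y\|}$. This is still fine, since \cref{le:sde-solution-exponential-integrability} gives all exponential moments of $\sup_s\|R^{t+h,x}_s\|$, so both the martingale property of the stochastic integral and the uniform integrability of the drift integrand follow exactly as you say. Second, the convergence $R^{t+h,x}_s\to x$ as $(h,s)\to(0,0)$ does not need \cref{le:Y_processes_stability} (which compares two drifts); it follows directly from the a priori bound $\|R^{t+h,x}_s-x\|\le \int_0^s c(1+\|R^{t+h,x}_r\|)\,dr+\|\sigma U_s\|$ and sample-path continuity of $U$.
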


\begin{proof}[Proof]
Consider the mapping $\mathcal{V}\colon [0,T]\times \R^d\times \Omega \to \R$ given by
	\be \label{eq:Ito-cV}
	\begin{split}
	\mathcal{V}(t,x) = \varphi(x)
	+ 
	\int_0^t 
	\exp\!\Big({\textstyle\int\limits_0^s} B(t-r,R^{t,x}_r)\,dr \Big) 
	\biggl[
	\tfrac{1}{2} \operatorname{Trace}_{\R^d}\!\big( \sigma \sigma^{T} 
	\operatorname{Hess}(\varphi)(R^{t,x}_s) \big) 
	\\
	+
	\langle \nabla\varphi(R^{t,x}_s), b(t-s,R^{t,x}_s) \rangle_{ \R^d }
	+ 	
	B(t-s,R^{t,x}_s) \varphi(R^{t,x}_s) 
	\biggr] \,ds \quad \mbox{for $t \in [0,T]$ and $x \in \R^d$.}
	\end{split}
	\ee
	It follows from \eqref{def:mathcal-Y-Smooth} by It\^o's formula that
	\be
	\label{eq:Ito-cY}
	\begin{split}
	\varphi(R^{t,x}_s)
	&= 
	\varphi(x)
	+ 
	\int_0^s \langle \nabla \varphi(R^{t,x}_r), \sigma\, dU_r \rangle_{ \R^d } \\
	& 
	+ 
	\int_0^s \Big[ \langle \nabla \varphi(R^{t,x}_r), b(t-r, R^{t,x}_r) \rangle_{ \R^d }
	+
	\tfrac{1}{2} \operatorname{Trace}_{\R^d}
	\!\big(\sigma \sigma^{T} \operatorname{Hess} (\varphi)(R^{t,x}_r)\big) \Big] \,dr 
	\quad \mbox{$\P$-a.s.}
	\end{split}
	\ee
for all $t \in [0,T]$, $s \in [0,t]$ and $x \in \R^d$.
	In addition, one has
	\begin{equation}
	\label{eq:Ito-fB}
	\begin{split}
	\exp\!\Big({\textstyle\int\limits_0^s} B(t-r,R^{t,x}_r)\,dr \Big) 
	= 1 + \int_0^s \exp\!\Big({\textstyle\int\limits_0^r} B(t-w,R^{t,x}_w)\,dw\Big)\, 
	B(t-r,R^{t,x}_r)\,dr 
	\end{split}
	\end{equation}
for all $t \in [0,T]$, $s \in [0,t]$ and $x \in \R^d$.
\eqref{mathfrak-V-Smooth}--\eqref{eq:Ito-fB} and another application of
It\^o's formula give	
\be \label{eq:Ito-V}
	\begin{split}
	\mathcal{U}(t,x)
	&=
	\varphi(R^{t,x}_t) \exp\!\Big({\textstyle\int\limits_0^t} B(t-s,R^{t,x}_s)\,ds\Big)\\
	&= \varphi(x)
	+ 
	\int_0^t \exp\!\Big({\textstyle\int\limits_0^s} B(t-r,R^{t,x}_r)\,dr\Big)
	\langle \nabla\varphi (R^{t,x}_s), \sigma\,dU_s \rangle_{\R^d}
	\\
	& \quad + 
	\int_0^t\exp\!\Big({\textstyle\int\limits_0^s} B(t-r,R^{t,x}_r)\,dr \Big) 
	\langle \nabla\varphi(R^{t,x}_s), b(t-s, R^{t,x}_s) \rangle_{ \R^d } \,ds
	\\
	& \quad
	+
	\tfrac{1}{2} \int_0^t 
	\exp\!\Big({\textstyle\int\limits_0^s} B(t-r,R^{t,x}_r)\,dr\Big)
	\operatorname{Trace}_{\R^d}\!\big( \sigma \sigma^{T}  
	\operatorname{Hess}(\varphi)(R^{t,x}_s)\big) \,ds\\
	& \quad + \int_0^t 
	\exp\!\Big({\textstyle\int\limits_0^s} B(t-r,R^{t,x}_r)\,dr \Big)
	B(t-s,R^{t,x}_s)
	\varphi(R^{t,x}_s) \,ds
	\\
	&= 
	\mathcal{V}(t,x) 
	+ 	
	\int_0^t \exp\!\Big({\textstyle\int\limits_0^s} B(t-r,R^{t,x}_r)\,dr \Big)
	\langle \nabla\varphi(R^{t,x}_s), \sigma \,dU_s \rangle_{ \R^d } \quad 
	\mbox{$\P$-a.s.}
	\end{split}
	\ee
	for all $t \in [0,T]$ and $x \in \R^d$.
Now, observe that \cref{le:sde-solution-exponential-integrability} and the assumption 
that $\varphi \in C^3(\R^d,[0,\infty))$ has at most polynomially growing partial derivatives 
of first, second and third order imply that 
	\begin{multline} \label{eq:Zakai-Smooth:phi:polynomial}
	\E\Bigl[ 
	\sup\nolimits_{t \in [0, T]}
	\sup\nolimits_{s \in [0, t]}
	\sup\nolimits_{x \in [-p, p]^d} 
	\bigl( |\varphi(R^{t,x}_s)|^p + \lVert \nabla
	\varphi (R^{t,x}_s)\rVert_{L(\R^d, \R)}^p
	\\
	+ \lVert D^2 \varphi (R^{t,x}_s)\rVert_{L^{(2)}(\R^d, \R)}^p 
	+ \lVert D^3 \varphi (R^{t,x}_s)\rVert_{L^{(3)}(\R^d, \R)}^p 
	\bigr) \Bigr] < \infty \quad \mbox{for all $p \in (0, \infty)$.}
	\end{multline}
Moreover, \cref{le:sde-solution-exponential-integrability} and the assumption that 
the partial derivative of $B \in C^{1,2}([0,T]\times \R^d,\R)$
with respect to $t$ as well as its first and second order partial 
derivatives with respect to the $x$-variables are at most polynomially growing ensure that
	\begin{multline} \label{eq:Zakai-Smooth:B:polynomial}
	\E\Bigl[ 
	\sup\nolimits_{t \in [0, T]}
	\sup\nolimits_{s \in [0, t]}
	\sup\nolimits_{x \in [-p,p]^d} 
	\Big( | B(t-s, R^{t,x}_s) |^p + | \tfrac{\partial}{\partial t}B (t-s, R^{t,x}_s) |^p 
	\\
	+ \lVert D_x B (t-s, R^{t,x}_s)\rVert_{L(\R^d, \R)}^p
	+ \lVert D^2_x B (t-s, R^{t,x}_s)\rVert_{L^{(2)}(\R^d, \R)}^p \Big)
	\Bigr] < \infty \quad \mbox{for all $p \in (0,\infty)$.}
	\end{multline} 
Similarly, \cref{le:sde-solution-exponential-integrability} and the assumption 
$\sup_{(t,x)\in [0,T]\times \R^d} \frac{B(t,x)}{1+\Vert x \Vert_{\R^d}} < \infty$ imply that	
\begin{equation} \label{eq:Zakai-Smooth:exp:polynomial}
	\E\biggl[ 
	\sup\nolimits_{t \in [0, T]}
	\sup\nolimits_{s \in [0, t]}
	\sup\nolimits_{x \in [-p,p]^d} 
	\Bigl| \exp\!\Big( {\textstyle \int\limits_0^s} B(t-r, R^{t,x}_r)\,dr \Big) \Bigr|^p 
	\biggr] < \infty \quad \mbox{for all $p \in (0, \infty)$.}
	\end{equation} 
From \eqref{eq:Zakai-Smooth:phi:polynomial}, \eqref{eq:Zakai-Smooth:exp:polynomial}
and H\"older's inequality one obtains
	\begin{equation} \label{eq:Zakai-Smooth:L2-martingale}
	\E\bigg[\int_0^t \Big\lVert \exp\!\Big({\textstyle\int\limits_0^s} 
	B(t-r,R^{t,x}_r)\,dr \Big)
	\sigma^{T} \nabla\varphi (R^{t,x}_s)\Big\rVert_{\R^d}^2\,ds \bigg]<\infty
	\quad \mbox{for all $t \in [0,T]$ and $x \in \R^d$.}
	\end{equation}
Next, note that it follows from the assumptions on $b \in C^{1,2}([0,T]\times \R^d,\R^d)$
that it grows at most linearly. Therefore, we obtain from 
\eqref{mathfrak-V-Smooth} and 
\eqref{eq:Ito-V}--\eqref{eq:Zakai-Smooth:L2-martingale} together with Fubini's theorem that
	\begin{align} \notag
	u(t,x)
	&=
	\E\big[\mathcal{U}(t,x)\big]
	= \E\big[\mathcal{V}(t,x)\big]
	\\ \notag
	&= \varphi(x)
	+ 
	\int_0^t
	\E\Bigl[
	\exp\!\Big({\textstyle\int\limits_0^s} B(t-r,R^{t,x}_r)\,dr\Big) 
	\langle \nabla \varphi (R^{t,x}_s), b(t-s, R^{t,x}_s) \rangle_{\R^d}
	\Bigr]
	\,ds\\ \label{eq:Ito-V-Fubini}
	& 
	+
	\tfrac{1}{2} \int_0^t 
	\E\Bigl[\exp\!\Big({\textstyle\int\limits_0^s} B(t-r,R^{t,x}_r)\,dr\Big)
	\operatorname{Trace}_{\R^d}\!\big(\sigma\sigma^{T}
	\operatorname{Hess}(\varphi)(R^{t,x}_s)\big)\Bigr] 
	\,ds\\ \notag
	& 
	+ \int_0^t 
	\E\Bigl[
	\exp\!\Big({\textstyle\int\limits_0^s} B(t-r,R^{t,x}_r)\,dr \Big)\, 
	B(t-s,R^{t,x}_s) \varphi(R^{t,x}_s)
	\Bigr]
	\,ds \quad \mbox{for all $t \in [0,T]$ and $x \in \R^d$.}
	\end{align}
It follows from \eqref{def:mathcal-Y-Smooth}, the assumption that 
$b \in C^{1,2}([0,T]\times\R^d,\R^d)$ has bounded partial derivatives, 
the de la Vall\'ee Poussin theorem (cf., e.g., \cite[Corollary~6.21]{Klenke_2014}),
and the Vitali convergence theorem (cf., e.g., \cite[Theorem~6.25]{Klenke_2014})
that for all $\omega \in \Omega$ and $s \in [0,T]$, the mapping 
$(t,x) \mapsto R^{t,x}_s$ is in $C^{1,0}([s,T] \times\R^d,\R^d)$ with
	\be
	\label{eq:partial-t-mathcalY}
	\tfrac{\partial}{\partial t} R^{t,x}_s
	=
	\int_0^s \Big[ \tfrac{\partial}{\partial t} b (t-r, R^{t,x}_r) 
	+ \big[ D_x b (t-r, R^{t,x}_r)\big]\!\big(\tfrac{\partial}{\partial t} 
	R^{t,x}_r \big) \Big] \,dr
	\ee
	(cf.\ also, e.g., \cite[Theorem~4.6.5]{kunita1997stochastic}).
	This and the assumption that $b \in C^{1,2}([0,T]\times\R^d,\R^d)$ 
	has bounded partial derivatives imply that
	\[
	\begin{split}
	& \lVert \tfrac{\partial}{\partial t} R^{t,x}_s \rVert_{\R^d}
	\leq 
	T \sup\nolimits_{(r, y) \in [0,T] \times \R^d} \Vert \tfrac{\partial}{\partial t} b(r, y)\Vert_{\R^d} 
	\\
	& + \sup\nolimits_{(r, y)\in [0,T]\times \R^d}\Vert \tfrac{\partial}{\partial x} b(r, y)\Vert_{L(\R^d,\R^d)} 
	\int_0^s 
	\lVert \tfrac{\partial}{\partial t}R^{t,x}_r \rVert_{\R^d} \,dr
	\quad \mbox{for all $t \in [0,T]$, $s \in [0,t]$ and $x \in \R^d$,}
	\end{split} 
	\]
	which, by Gronwall's integral inequality (cf., e.g., \cite[Lemma~2.11]{grohs2018proof}), yields
	\be \label{eq:delaVaPou-t-cY}
	\begin{split}
	&
	\sup\nolimits_{t \in [0,T]} 
	\sup\nolimits_{s \in [0,t]} 
	\sup\nolimits_{x \in \R^d} \lVert\tfrac{\partial}{\partial t} R^{t,x}_s\rVert_{\R^d}
	\\
	&\leq 
	T \bigl[ \sup\nolimits_{(r, y) \in [0,T]\times \R^d}\Vert \tfrac{\partial}{\partial t}b (r, y)\Vert_{\R^d} \bigr]
	\exp\Bigl( T \sup\nolimits_{(r, y)\in [0,T]\times \R^d}\Vert \tfrac{\partial}{\partial x}b (r, y)\Vert_{L(\R^d,\R^d)}
	\Bigr)
	<\infty.
	\end{split}
	\ee
	Next observe that \eqref{eq:partial-t-mathcalY}, 
	\cref{le:sde-solution-exponential-integrability},
	our assumptions on $b, \varphi, B$,
	the de la Vall\'ee Poussin theorem (cf., e.g., \cite[Corollary~6.21]{Klenke_2014}),
	and 
	the Vitali convergence theorem (cf., e.g., \cite[Theorem~6.25]{Klenke_2014})
	imply that
	\begin{enumerate}[(a)]
		\item \label{it:Zakai-Smooth:partial-t-V-1} $\mbox{}$\\[-8mm]
		\[
		\label{eq:partial-t-V-1}
		\begin{split}
		&\frac{d}{dt}\bigg(\int_0^t
		\exp\!\Big({\textstyle\int\limits_0^s} B(t-r,R^{t,x}_r)\,dr\Big) 
		\big[\tfrac{\partial}{\partial x}\varphi (R^{t,x}_s)\big]\!\big(b(t-s,R^{t,x}_s)\big) \,ds\bigg)\\
		&=
		\exp\!\Big({\textstyle\int\limits_0^t} B(t-r,R^{t,x}_r)\,dr\Big) 
		\big[ \tfrac{\partial}{\partial x}\varphi (R^{t,x}_t)\big]\!\big(b(0,R^{t,x}_t)\big)\\
		& 
		+
		\int_0^t 
		\exp\!\Big({\textstyle\int\limits_0^s} B(t-r,R^{t,x}_r)\,dr\Big) 
		\big[ \tfrac{\partial}{\partial x}\varphi (R^{t,x}_s)\big]\!\big(b(t-s,R^{t,x}_s)\big)
		\\
		& \quad
		\times
		{\textstyle \int\limits_0^s}
		\Big[
		\tfrac{\partial}{\partial t} B(t-r,R^{t,x}_r)
		+ 
		\big[\tfrac{\partial}{\partial x}B(t-r,R^{t,x}_r)\big]\!\big(\tfrac{\partial}{\partial t}
		R^{t,x}_r \big) \Big] \, dr \, ds
		\\
		& 
		+
		\int_0^t 
		\exp\!\Big({\textstyle\int\limits_0^s} B(t-r,R^{t,x}_r)\,dr \Big)
		\big[\tfrac{\partial^2}{\partial x^2}\varphi(R^{t,x}_s) \big]\!\big(\tfrac{\partial}{\partial t}
		R^{t,x}_s, \fb(t-s,R^{t,x}_s) \big)
		\,ds\\
		&  
		+ \int_0^t
		\exp\!\Big({\textstyle\int\limits_0^s} B(t-r,R^{t,x}_r)\,dr \Big)
		\\
		& \quad \times
		\big[\tfrac{\partial}{\partial x}\varphi(R^{t,x}_s)\big]\!\Big[
		\tfrac{\partial}{\partial t} b(t-s,R^{t,x}_s)
		+
		\big[\tfrac{\partial}{\partial x} b(t-s,\cY^{t,x}_s) \big]
		(\tfrac{\partial}{\partial t}R^{t,x}_s)
		\Big] \,ds
		\end{split}
		\]
	for all $t \in [0,T]$ and $x \in \R^d$,
		\item \label{it:Zakai-Smooth:partial-t-V-2} $\mbox{}$\\[-9mm]
		\[
		\label{eq:partial-t-V-2}
		\begin{split}
		&\frac{d}{dt}\bigg(	\int_0^t 
		\exp\!\Big({\textstyle\int\limits_0^s} B(t-r,R^{t,x}_r)\,dr \Big)
		\operatorname{Trace}_{\R^d}
		\bigl( \sigma \sigma^{T} \operatorname{Hess} (\varphi)(R^{t,x}_s)\bigr)
		\,ds\bigg)\\
		&
		=
		\exp\!\Big({\textstyle\int\limits_0^t} B(t-r,R^{t,x}_r)\,dr\Big)
		\operatorname{Trace}_{\R^d}
		\bigl( \sigma \sigma^{T} \operatorname{Hess}(\varphi)(R^{t,x}_t) \bigr)\\
		& 
		+
		\int_0^t
		\exp\!\Big({\textstyle\int\limits_0^s} B(t-r,R^{t,x}_r)\,dr \Big)
		\operatorname{Trace}_{\R^d}\bigl( \sigma \sigma^{T} 
		\operatorname{Hess}(\varphi)(R^{t,x}_s)\bigr)\\
		& \quad 
		\times
		\bigg[
		{\textstyle \int\limits_0^s}
		\Big( \tfrac{\partial}{\partial t}B(t-r,R^{t,x}_r)
		+ 
		\bigl[\tfrac{\partial}{\partial x}B(t-r,R^{t,x}_r)\bigr]\!\bigl(\tfrac{\partial}{\partial t}
		R^{t,x}_r\bigr)
		\, \Big) \, dr \bigg] \,ds \\
		& 
		+		
		\int_0^t
		\exp\!\Big({\textstyle\int\limits_0^s} B(t-r,R^{t,x}_r)\,dr \Big) 
		\operatorname{Trace}_{\R^d}\bigl(
		\sigma \sigma^{T}
		\bigl[\tfrac{\partial}{\partial x}\operatorname{Hess}(\varphi)(R^{t,x}_s)\bigr]\!
		\bigl(\tfrac{\partial}{\partial t}R^{t,x}_s\bigr)
		\bigr) \,ds
		\end{split}
		\]
		for all $t \in [0,T]$ and $x \in \R^d$, and
		\item \label{it:Zakai-Smooth:partial-t-V-3} $\mbox{}$\\[-9mm]
		\[
		\label{eq:partial-t-V-3}
		\begin{split}
		&\frac{d}{dt}\bigg(
		\int_0^t 
		\exp\!\Big({\textstyle\int\limits_0^s} B(t-r,R^{t,x}_r)\,dr \Big)\,
		B(t-s,R^{t,x}_s)\, \varphi(R^{t,x}_s) \,ds
		\bigg)\\
		&=
		\exp\!\Big({\textstyle\int\limits_0^t} B(t-r,R^{t,x}_r)\,dr\Big) 
		B(0, R^{t,x}_t)\,\varphi(R^{t,x}_t)\\
		& 
		+
		\int_0^t
		\exp\!\Big({\textstyle\int\limits_0^s} B(t-r,R^{t,x}_r)\,dr\Big)
		B(t-s,R^{t,x}_s)\,\varphi(R^{t,x}_s) \\
		& \quad
		\times
		{\textstyle \int\limits_0^s}
		\Big[
		\tfrac{\partial}{\partial t}B (t-r,R^{t,x}_r)
		+ 
	\bigl[\tfrac{\partial}{\partial x}B(t-r,R^{t,x}_r)\bigr]\!\bigl(\tfrac{\partial}{\partial t}
	R^{t,x}_r \bigr) \Big] \, dr \,ds\\
		& 
		+ \int_0^t
		\exp\!\Big({\textstyle\int\limits_0^s} B(t-r,R^{t,x}_r)\,dr\Big)
		\varphi(R^{t,x}_s)\\
		& \quad
		\times
		\bigg[
		\tfrac{\partial}{\partial t} B(t-s,R^{t,x}_s)
		+
		\bigl[\tfrac{\partial}{\partial x} B(t-s, R^{t,x}_s)\bigr]\!
		\bigl(\tfrac{\partial}{\partial t} R^{t,x}_s\bigr)
		\bigg]
		\,ds\\
		& 
		+
		\int_0^t 
		\exp\!\Big({\textstyle\int\limits_0^s} B(t-r,R^{t,x}_r)\,dr\Big)
		B(t-s,\cY^{t,x}_s)
		\bigl[
		\tfrac{\partial}{\partial x}\varphi(R^{t,x}_s)\bigr]\!
		\bigl(\tfrac{\partial}{\partial t}R^{t,x}_s \bigr)
		\,ds
		\end{split}
		\]
		for all $t \in [0,T]$ and $x \in \R^d$.
	\end{enumerate}
	Combining \eqref{eq:Ito-cV},   \eqref{eq:Zakai-Smooth:phi:polynomial}--\eqref{eq:Zakai-Smooth:exp:polynomial}, \eqref{eq:delaVaPou-t-cY} and the assumption that $b \in C^{1,2}([0,T]\times\R^d, \R^d)$ has bounded partial derivatives with \eqref{it:Zakai-Smooth:partial-t-V-1}--\eqref{it:Zakai-Smooth:partial-t-V-3} and H\"older's inequality shows that  
	\[ \label{eq:delaVaPou-t-V}
	\E\Big[
	\sup\nolimits_{t\in [0,T]}
	\sup\nolimits_{x\in [-p,p]^d} 
	| \tfrac{\partial}{\partial t} \mathcal{V}(t,x) |^p
	\Big]<\infty \quad \mbox{for all } p \in (0, \infty).
	\]
	This, \eqref{eq:Ito-V-Fubini}, 
	\eqref{it:Zakai-Smooth:partial-t-V-1}--\eqref{it:Zakai-Smooth:partial-t-V-3},
	the de la Vall\'ee Poussin theorem (cf., e.g., \cite[Corollary~6.21]{Klenke_2014}),	
	the Vitali convergence theorem (cf., e.g., \cite[Theorem~6.25]{Klenke_2014}), 
	and the fundamental theorem of calculus ensure that
	\begin{enumerate}[(A)]
\item\label{eq:v-C-1-0-a-Zakai} for all $x \in \R^d$, the mapping
		$t \mapsto u(t,x)$ is in $C^{1}([0,T],\R)$,
		
		\item\label{eq:v-C-1-0-b-Zakai} 
		the mapping 
		$(t,x)\mapsto \frac{\partial}{\partial t}u(t,x)$ is in
		$C([0,T]\times \R^d,\R)$,
		and
		\item\label{eq:v-C-1-0-c-Zakai} 
		$\frac{\partial}{\partial t}u(t,x) = 	\E[\tfrac{\partial}{\partial t}\mathcal{V}(t,x)]$
		for all $t \in [0,T]$ and $x \in \R^d$,
	\end{enumerate}
	which, together with \cref{le:Zakai-smooth-x}.\eqref{mathfrak-v-smooth-smooth-x},
	implies \eqref{mathfrak-v-Smooth}.
	
	Next, note that it follows from the Markov property of $(R^{t,x}_s)_{s \in [0,t]}$ that
	\[
	\E \Big[G((R^{t,x}_{\delta+s})_{s\in [0,t-\delta]})\mathbbm{1}_{A}(R^{t,x}_{\delta}) \Big] 
	= \int_A \E \Big[G((R^{t-\delta,y}_{s})_{s \in [0,t- \delta]}) \Big] R^{t,x}_{\delta}(\P)(dy)
	\]
	for all Borel subsets $A \subseteq \R^d$, $t \in [0, T] $, $\delta \in [0,t]$, $x \in \R^d$
	and bounded functions $G \in C(C([0,t- \delta],\R^d),\R)$, which together with
	\eqref{mathfrak-V-Smooth}, implies that
	\begin{equation} \label{eq:Markov}
	\begin{split}
	u(t,x) 
	&=\E\bigl[ \mathcal{U}(t, x) \bigr]
	=\E\Bigl[\varphi(\cY^{t,x}_t)\exp\!\Big({\textstyle{\int\limits_0^t}} 
	B(t-s,R^{t,x}_s)\,ds \Big)\Bigr]\\
	&=\E\biggl[
	\exp\!\Big({\textstyle{\int\limits_0^{\delta}}}B(t-s,R^{t,x}_s)\,ds\Big)
	\E\Bigl[\varphi(R^{t,x}_t) \exp\!\Big({\textstyle{\int\limits_{\delta}^t}} B(t-s,R^{t,x}_s)\,ds\Big)
	\,\Big| \, (R^{t,x}_s)_{0 \le s \le \delta} 
	\Bigr]
	\biggr]\\
	&=\E\biggl[
	\exp\!\Big({\textstyle{\int\limits_0^{\delta}}} B(t-s, R^{t,x}_s)\,ds\Big)
	\E\Bigl[\varphi(R^{t,x}_t) \exp\!\Big({\textstyle{\int \limits_{\delta}^t}} B(t-s, R^{t,x}_s)\,ds\Big)
	\,\Big| \, R^{t,x}_{\delta} 
	\Bigr]
	\biggr]\\
	&=\E\biggl[
	\exp\!\Big({\textstyle{\int\limits_0^{\delta}}} B(t-s,R^{t,x}_s)\,ds\Big)
	\E
	\Bigl[\varphi(R^{t-\delta,y}_{t- \delta})\exp\!\Big({\textstyle{\int\limits_0^{t-\delta}}} 
	B(t - \delta -s, R^{t-\delta,y}_s)\,ds\Big)
	\Bigr]\Big|_{R^{t,x}_{\delta} = y}
	\biggr]\\
	&=\E\biggl[ \exp\!\Big({\textstyle{\int\limits_0^{\delta}}} 
	B(t-s, R^{t,x}_s)\,ds\Big) u(t-\delta, R^{t,x}_{\delta}) \biggr].
	\end{split}
	\end{equation}
	Furthermore, \eqref{def:mathcal-Y-Smooth}, \eqref{mathfrak-v-Smooth}, 
	and It\^o's formula assure that 
	\[
	\begin{split}
	u \big(t- \delta, R^{t,x}_{\delta} \big)
	&=
	u(t,x)
	- 
	\int_0^{\delta} \tfrac{\partial}{\partial t} u(t-s, R^{t,x}_s)\,ds
	+ 
	\int_0^{\delta} \langle \nabla_x u(t-s,R^{t,x}_s), b(t-s,R^{t,x}_s) \rangle_{ \R^d } \,ds\\
	&  
	+
	\int_0^{\delta} \langle \nabla_x u(t-s, R^{t,x}_s), \sigma\,dU_s \rangle_{ \R^d }
	+
	\tfrac{1}{2}\int_0^{\delta} \operatorname{Trace}_{\R^d}\bigl(\sigma \sigma^{T} 
	\operatorname{Hess}_x (u)(t-s, R^{t,x}_s)\bigr)
	\,ds \quad \mbox{$\P$-a.s.}
	\end{split}
	\]
	for all $t\in[0,T]$, $\delta \in [0,t]$ and $x\in \R^d$.
	Combining this, \eqref{eq:Ito-fB}, and It\^o's formula gives
		\begin{equation}
	\label{eq:Ito-PDE-derivation}
	\begin{split}
	&\exp\!\Big({\textstyle{\int\limits_0^{\delta}}} B(t-s, R^{t,x}_s)\,ds\Big) u\big(t- \delta,R^{t,x}_h\big)\\
	&= u(t,x)
	-\int_0^{\delta}
	\exp\!\Big({\textstyle{\int\limits_0^s}} B(t-r, R^{t,x}_r)\,dr \Big)
	\tfrac{\partial}{\partial t} u(t-s, R^{t,x}_s)\,ds\\
	& 
	+\int_0^{\delta}
	\exp\!\Big({\textstyle{\int\limits_0^s}} B(t-r, R^{t,x}_r)\,dr \Big) 
	\langle \nabla_x u(t-s, R^{t,x}_s) ,b(t-s, R^{t,x}_s) \rangle_{ \R^d }
	\,ds\\
	&  
	+\int_0^{\delta}
	\exp\!\Big({\textstyle{\int\limits_0^s}} B(t-r,R^{t,x}_r)\,dr \Big) 
	\langle \nabla_x u(t-s, R^{t,x}_s), 
	\sigma\,dU_s \rangle_{ \R^d } \\
	&
	+\tfrac{1}{2}
	\int_0^{\delta} 	\exp\!\Big({\textstyle{\int\limits_0^s}} B(t-r,R^{t,x}_r)\,dr \Big)  
	\operatorname{Trace}_{\R^d}\bigl(\sigma \sigma^{T}
	\operatorname{Hess}_x (u)(t-s,R^{t,x}_s) \bigr) \,ds\\
	&  
	+
	\int_0^{\delta}
	\exp\!\Big({\textstyle{\int\limits_0^s}} B(t-r , R^{t,x}_r)\,dr \Big) 
	B(t-s,\cY^{t,x}_s) \, u(t-s, R^{t,x}_s)
	\,ds \quad \mbox{$\P$-a.s.}
	\end{split}
	\end{equation}
	for all $t\in[0,T]$, $\delta \in [0,t]$ and $x \in \R^d$.
	Moreover, it follows from \cref{le:sde-solution-exponential-integrability}, 
	\cref{le:V}, \cref{le:Zakai-smooth-x}, \eqref{it:Zakai-Smooth:partial-t-V-1}--\eqref{it:Zakai-Smooth:partial-t-V-3}, 
	\eqref{eq:v-C-1-0-a-Zakai}--\eqref{eq:v-C-1-0-c-Zakai}, and our assumptions on 
	$b$, $\varphi$ and $B$ that
	 	\begin{multline} \label{eq:Zakai-Smooth:growth_v}
	\E\biggl[ 
	\sup_{t \in [0, T]} 
	\sup_{s \in [0, t]} 
	\sup_{x \in [-p,p]^d} 
	\Big( |u(t - s, R^{t,x}_s)|^p
	+ \lVert \nabla_x u(t - s, R^{t,x}_s) \rVert_{L(\R^d ,\R)} 
	\\
	+ \lVert D^2_x u(t - s, R^{t,x}_s) \rVert_{L^{(2)}(\R^d, \R)} 
	\Big) \biggr] < \infty
	\quad \mbox{for all $p \in (0, \infty)$.}	
	\end{multline} 
	This ensures that
	\[
	\E\bigg[\int_0^t\Big\lVert
	\exp\!\Big({\textstyle{\int\limits_0^s}} B(t-r, R^{t,x}_r)\,dr\Big) 
	\sigma^{T} \nabla_x u(t-s,R^{t,x}_s) \Big\rVert_{\R^d}^2
	\,ds\Bigg]<\infty \quad \mbox{for all $t \in [0, T]$ and $x \in \R^d$,}
	\]
	which, together with \eqref{eq:Markov}, \eqref{eq:Ito-PDE-derivation}, \eqref{eq:Zakai-Smooth:growth_v} and Fubini's theorem yields
	\[
	\begin{split}
	0
	&=
	\E\biggl[\exp\!\Big({\textstyle{\int\limits_0^{\delta}}} B(t-s,R^{t,x}_s)\,ds\Big)
	u \big(t-\delta,\cY^{t,x}_h\big)- u(t,x)\biggr]\\
	&=
	\int_0^{\delta} 
	\E\biggl[\exp\!\Big({\textstyle{\int\limits_0^s}} B(t-r, R^{t,x}_r)\,dr\Big)
	\Big(  -
	\tfrac{\partial}{\partial t} u(t-s, R^{t,x}_s) +\tfrac{1}{2}\operatorname{Trace}_{\R^d}\bigl(\sigma \sigma^{T} \operatorname{Hess}_x (u)(t-s, R^{t,x}_s)\bigr)
	\\
	& 
	\qquad \qquad 
	+ \langle \nabla_x u(t-s, R^{t,x}_s) , 
	b(t-s,R^{t,x}_s) \rangle_{ \R^d }
	+
	B(t-s, R^{t,x}_s) \, u(t-s, R^{t,x}_s)
	\Big)\biggr]
	\,ds 	\end{split}
	\]
	for all $t \in (0,T]$, $\delta \in [0,t]$ and $x\in \R^d$.
	This, \eqref{mathfrak-v-Smooth}, \eqref{eq:Zakai-Smooth:phi:polynomial}--\eqref{eq:Zakai-Smooth:exp:polynomial}, \eqref{eq:Zakai-Smooth:growth_v}, \cref{le:Y-differentiability-properties}, and \cref{le:V} imply that 
	\begin{align} \label{eq:PDE-derivation-final}
	0&= \lim_{\delta \downarrow 0} \frac{1}{\delta}
	\E\Bigl[\exp\!\Big({\textstyle{\int\limits_0^{\delta}}} 
	B(t-s, R^{t,x}_s)\,ds\Big) u \big(t-\delta, R^{t,x}_{\delta} \big)- u(t,x)\Bigr]
	\\ \nonumber
	& =-
	\tfrac{\partial}{\partial t} u(t,x)
	+
	\tfrac{1}{2}\operatorname{Trace}_{\R^d}\bigl(\sigma \sigma^T \operatorname{Hess}_x (u)(t,x) \bigr)
	+
	\langle b(t,x), \nabla_x u(t,x) \rangle_{ \R^d }
	+
	B(t,x) u(t,x)
	\end{align}
	for all $t \in(0,T]$ and $x\in \R^d$.
	Moreover, it follows from \eqref{def:mathcal-Y-Smooth}--\eqref{mathfrak-V-Smooth} 
	that $u(0,x)=\varphi(x)$ for all $x\in \R^d$.
	Combining this with \eqref{eq:PDE-derivation-final} and \eqref{mathfrak-v-Smooth} 
	proves \eqref{PDE-Smooth-item}, which completes the proof.
	\end{proof}

\begin{proposition} \label{prop:Zakai-approx}
Let $T,c,\alpha \in (0,\infty)$, $d\in \N$, $\sigma \in \R^{d\times d}$, and 
consider a function $b \in C^{0,2}([0,T]\times\R^d,\R^d)$ with bounded partial derivatives
of first and second order with respect to the $x$-variables. Let
$\varphi \in C^2(\R^d,[0,\infty))$ have at most polynomially growing partial derivatives
of first and second order and $B \in C^{0,2}([0,T]\times \R^d,\R)$ at most 
polynomially growing partial derivatives of first and second order with 
	respect to the $x$-variables. In addition, assume that
	\begin{equation} \label{eq:Hoelder-fb}
		\begin{split}
		&\Vert b(t,x)- b(s,x) \Vert_{\R^d} 
		+ 
		\Vert \tfrac{\partial^n}{\partial x^n} b(t,x)- \tfrac{\partial^n}{\partial x^n}
		b(s,x)\Vert_{L^{(n)}(\R^d,\R^d)}
		\leq c |t-s|^\alpha,
		\end{split}
		\end{equation}
		\begin{equation} \label{eq:linear-fB}
		\begin{split}
		& B(t,x) \leq c(1 + \lVert x \rVert_{\R^d}), \mbox{ and}
		\end{split}
		\end{equation}
		\begin{equation} \label{eq:Hoelder-fB}
		\begin{split}
		&|B(t,x)- B(s,x)|
		+
		\Vert \tfrac{\partial^n}{\partial x^n} B(t,x)
		- \tfrac{\partial^n}{\partial x^n} B(s,x)\Vert_{L^{(n)}(\R^d,\R)}
		\leq c (1+\Vert x \Vert_{\R^d})|t-s|^\alpha
		\end{split}
		\end{equation}
		for all $n \in \{1,2\}$, $s,t \in [0,T]$ and $x\in \R^d$.
	Let $(\Omega, \mathcal{F}, \P)$ be a probability space supporting a 
	standard Brownian motion
	$U \colon [0,T]\times\Omega\to\R^d$ with continuous sample paths. Consider
	stochastic processes $R^{t,x} \colon[0,t]\times\Omega \to \R^d$, 
	$t\in [0,T]$, $x \in \R^d$, satisfying
		\[ \label{def:mathcal-Y-approx}
		R^{t,x}_s
		=
		x
		+ 
		\int_0^s b(t-r,R^{t,x}_r) \,dr 
		+ 
		\sigma U_s
		\]
	for all $t\in[0,T]$, $s \in [0,t]$ and $x \in \R^d$,
	and let the function $u \colon [0,T]\times\R^d \to \R$ be given by
	\[
	\label{mathfrak-v-copy}
	\begin{split}
	& u(t,x)
	= \E\bigg[\varphi(R^{t,x}_t)\exp\!\Big({\textstyle\int\limits_0^t} 
	B(t-s,R^{t,x}_s)\,ds\Big)\bigg] \quad \mbox{for $t\in[0,T]$ and $x\in \R^d$.}
	\end{split}
	\]
	Then
	\begin{enumerate}[{\rm (i)}]
		\item\label{mathfrak-v-smooth-approx} 
		$u \in C^{1,2}([0,T]\times \R^d,\R)$, and
		\item\label{mathfrak-v-determ-approx}
	$\mbox{}$\\[-8.6mm]
			\[
			\label{nonlinearRPDE-Det-approx}
			\begin{split}
			&u(t,x)
			\\&
			=
			\varphi(x)
			+ 
			\int_0^t
			\bigg[ \tfrac{1}{2} \operatorname{Trace}_{\R^d}\bigl( \sigma \sigma^T
			\operatorname{Hess}_x (u)(s,x)\bigr) 
			+ 
			\left\langle b(s,x), \nabla_x u (s,x)\right\rangle_{\Rd}
			+ B(s,x) \, u(s,x) \bigg]
			\,ds
			\end{split}
			\]
			for all $t\in[0,T]$ and $x\in \R^d$.
	\end{enumerate}
\end{proposition}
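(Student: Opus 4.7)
The plan is to reduce to \cref{le:Zakai-Smooth} by mollifying the coefficients $b$ and $B$ in the time variable, exploiting the Hölder estimates \eqref{eq:Hoelder-fb} and \eqref{eq:Hoelder-fB} to obtain quantitative rates and using the stability results of \cref{subsec:stability} together with \cref{le:Zakai-smooth-x} to pass to the limit.

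First, I would apply the time-mollification of \cref{le:Mollification} coordinate-wise to $b$ and to $B$, producing sequences $b_n \in C^{1,2}([0,T]\times\R^d,\R^d)$ and $B_n \in C^{1,2}([0,T]\times\R^d,\R)$. The key point is that the mollification preserves the bounds on the $x$-derivatives of $b$, the linear growth of $B$ in \eqref{eq:linear-fB}, and the polynomial growth of the $x$-derivatives of $B$, while producing bounded $t$-derivative for $b_n$ and at-most-polynomially growing $t$-derivative for $B_n$ (these $t$-derivatives are controlled by the Hölder constants in \eqref{eq:Hoelder-fb}--\eqref{eq:Hoelder-fB}). Thus $b_n, B_n$ satisfy the hypotheses of \cref{le:Zakai-Smooth}, and \cref{le:Mollification}.\eqref{it:Mollifier:convergence-Bn_B} together with \eqref{eq:Hoelder-fb}--\eqref{eq:Hoelder-fB} gives $b_n \to b$ and $\partial_x^k b_n \to \partial_x^k b$ uniformly, and $B_n \to B$ and $\partial_x^k B_n \to \partial_x^k B$ uniformly after dividing by $(1+\lVert x\rVert_{\R^d})^p$ for a suitable $p$, for $k\in\{0,1,2\}$.

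Next, let $R^{n,t,x}$ denote the solution of the SDE with drift $b_n$ and the same Brownian motion $U$, and let $u_n$ be defined as in $u$ but with $(b,B)$ replaced by $(b_n,B_n)$. \cref{le:Zakai-Smooth} applied to $(b_n,B_n)$ yields $u_n\in C^{1,2}([0,T]\times\R^d,\R)$ satisfying the integral PDE with coefficients $b_n,B_n$. I would then invoke \cref{le:Y_processes_stability} with $(b,\mathfrak{b})=(b,b_n)$ to deduce that $R^{n,t,x}_s$ and its first and second $x$-derivatives converge to those of $R^{t,x}_s$, locally uniformly in $(t,s,x)$. Combined with \cref{le:loc-unif-conv}, the explicit formulas of \cref{le:V} for $\partial_x \mathcal{U}$ and $\partial_x^2 \mathcal{U}$, and the uniform convergence of $b_n,B_n$ and their $x$-derivatives established above, this gives $\omega$-wise locally uniform convergence of the random fields $\mathcal{U}_n, \partial_x \mathcal{U}_n, \partial_x^2 \mathcal{U}_n$ to $\mathcal{U}, \partial_x \mathcal{U}, \partial_x^2 \mathcal{U}$.

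To transfer this to convergence in expectation, I would combine \cref{le:sde-solution-exponential-integrability}, \cref{le:Y-differentiability-properties}, and \cref{le:V} with the polynomial growth of the derivatives of $\varphi$ and $B$ and the uniform linear growth bound \eqref{eq:linear-fB} on $B$, obtaining $L^p$-bounds uniform in $n$ for $\sup_{t,x\in[-q,q]^d}$ of $\mathcal{U}_n, \partial_x \mathcal{U}_n, \partial_x^2 \mathcal{U}_n$. Then the de la Vallée Poussin and Vitali theorems yield that $u_n \to u$, $\nabla_x u_n \to \nabla_x u$, and $\operatorname{Hess}_x u_n \to \operatorname{Hess}_x u$ locally uniformly on $[0,T]\times\R^d$, where the derivatives of $u$ are those identified in \cref{le:Zakai-smooth-x}. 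Passing to the limit in the integral PDE for $u_n$ via dominated convergence gives \eqref{nonlinearRPDE-Det-approx}, and since its right-hand side is continuously differentiable in $t$, we conclude that $u\in C^{1,2}$, completing both \eqref{mathfrak-v-smooth-approx} and \eqref{mathfrak-v-determ-approx}.

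The main obstacle I anticipate is the bookkeeping for the uniform-in-$n$ integrability at the level of second derivatives, since the formula in \cref{le:V}.\eqref{eq:partial-x-x-V} contains products like $[D_x B(t-s,R^{n,t,x}_s)]\cdot \partial_x R^{n,t,x}_s$ together with an exponential factor $\exp(\int_0^t B_n(t-s,R^{n,t,x}_s)\,ds)$; controlling the tail of this exponential uniformly in $n$ hinges on the uniform linear-growth bound on $B_n$ (inherited from \eqref{eq:linear-fB}) combined with the exponential moment estimate of \cref{le:sde-solution-exponential-integrability}.
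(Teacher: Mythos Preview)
Your approach is essentially the same as the paper's, and the argument is sound except for one missing ingredient: \cref{le:Zakai-Smooth} requires $\varphi \in C^3(\R^d,[0,\infty))$ with at most polynomially growing derivatives up to \emph{third} order (the third derivative of $\varphi$ enters in the time-differentiation step there, via the term $\tfrac{\partial}{\partial x}\operatorname{Hess}(\varphi)$). Here only $\varphi \in C^2$ is assumed, so mollifying $b$ and $B$ in time alone does not put you in a position to invoke \cref{le:Zakai-Smooth}.

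The paper fixes this by also mollifying $\varphi$ in space using the $H_n$-construction of \cref{le:Mollification}, producing $\varphi_n \in C^\infty(\R^d,[0,\infty))$ with derivatives up to second order bounded uniformly in $n$ by $(1+\lVert x\rVert_{\R^d})^q$; convergence $\varphi_n \to \varphi$ (locally uniformly) then comes from \cref{le:Mollification}.\eqref{it:Mollifier:convergence-Gn-G}. With this extra mollification in place, the rest of your outline --- stability via \cref{le:Y_processes_stability}, the explicit derivative formulas from \cref{le:V}, uniform-in-$n$ moment bounds from \cref{le:sde-solution-exponential-integrability}, and passage to the limit via de la Vall\'ee Poussin/Vitali --- matches the paper's proof.
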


\begin{proof}[Proof]
Throughout this proof we fix a 
	$q \in [1, \infty)$ such that
\begin{equation} \label{eq:at_most_polynomial_growth_of_varphi}
	\sup_{t \in [0, T]} 
	\sup_{x \in \R^d} 
	\left[ \frac{ 
		|\varphi(t,x)| 
		+ \Vert \nabla \varphi(t,x) \Vert_{L(\R^d,\R)} 
		+ \Vert D^2 \varphi(t,x) \Vert_{L^{(2)}(\R^d,\R)} }{ (1 + \Vert x \Vert_{\R^d})^{q} } 
	\right] < \infty
		\end{equation}
		and
	\begin{equation}\label{eq:at_most_polynomial_growth_of_B}
	\sup_{t \in [0, T]} 
	\sup_{x \in \R^d} 
	\left[ \frac{ 
		|B(t,x)| 
		+ \Vert D_x B(t,x) \Vert_{L(\R^d,\R)} 
		+ \Vert D^2_x B(t,x) \Vert_{L^{(2)}(\R^d,\R)} }{ (1 + \Vert x \Vert_{\R^d})^{q} }\right] < \infty.
	\end{equation}
Let the mappings $\varphi_n \colon\R^d \to [0,\infty)$ and 
$b_n, B_n \colon [0,T] \times \R^d \to \R^d$, $n\in \N$, be given by 
\begin{equation}
	\label{eq:def:varphi_n}
	\varphi_n(x)
	=
	\big(\tfrac{n}{2\pi}\big)^{\!\nicefrac{d}{2}}
	\int_{\R^d}	\varphi(y) 
	\exp\bigl(-\tfrac{n}{2}\lVert x-y \rVert_{\R^d}^2\bigr)\\
	\,dy,
	\end{equation}
	\begin{equation} \label{eq:def:b_n}
	b_n(t,x)
	= \bigl(\tfrac{n}{2\pi}\bigr)^{\!\nicefrac{1}{2}}\int_{-\infty}^{\infty}
	b\bigl(\min\{T, \max\{s, 0\}\},x\bigr) \exp\Bigl(\tfrac{-n(t-s)^2}{{2}}\Bigr)\,ds,
	\end{equation}
	\begin{equation} \label{eq:def:B_n}
	B_n(t,x)
	=
	\big(\tfrac{n}{2\pi}\big)^{\!\nicefrac{1}{2}}\int_{-\infty}^{\infty}
	B\bigl(\min\{T,\max\{s, 0\}\},x\bigr) \exp\Bigl(\tfrac{-n(t-s)^2}{{2}}\Bigr)\,ds
	\end{equation}
	for all $n \in \N$, $t \in [0,T]$ and $x \in \R^d$. Consider the stochastic processes
$R^{n,t,x} \colon [0,t]\times\Omega \to \R^d$, 
$n \in \N$, $t \in [0,T]$, $x \in \R^d$, and mappings
$\mathcal{U}_n \colon [0,T]\times \R^d\times \Omega \to \R$, $n\in \N_0$, 
$u_n \colon [0,T]\times \R^d \to \R$, $n\in \N$, given by
	\[ \label{def:mathcal-Y_n}
	R^{n,t,x}_s
	=
	x
	+ 
	\int_0^s
	b_n(t-r,R^{n,t,x}_r) \,dr 
	+ \sigma U_s,
	\]
	\[
	\label{eq:def:V-approx}
	\mathcal{U}_0(t,x)=\varphi(R^{t,x}_t)\exp\!\Big({\textstyle\int\limits_0^t} B(t-s,R^{t,x}_s)\,ds\Big), \quad
	\mathcal{U}_n(t,x)=\varphi_n(R^{n,t,x}_t)\exp\!\Big({\textstyle\int\limits_0^t} 
	B_n(t-s,R^{n,t,x}_s)\,ds \Big), 
	\]
	and
	\[
	\label{eq:def:v-approx}
	u_n(t,x)= \E[\mathcal{U}_n(t,x)]
	\]
	for $n \in \N$, $t \in [0,T]$ and $x \in \R^d$. 
	\eqref{eq:at_most_polynomial_growth_of_varphi} and
	\eqref{eq:def:varphi_n} imply that $(\varphi_n)_{n\in\N} \subseteq C^3(\R^d,\R)$ and
	\begin{equation} \label{eq:uniform_derivative_bound_varphi_n}
	\sup_{n \in \N} \sup_{t \in [0, T]} \sup_{x \in \R^d} 
	\left[ 
	\frac{|\varphi_n(t,x)| + \Vert \nabla \varphi_n(t,x) \Vert_{L(\R^d,\R)} + \Vert D^2
	\varphi_n(t,x) \Vert_{L^{(2)}(\R^d,\R)} }{(1 + \Vert x \Vert_{\R^d})^q}
	\right] < \infty. 
	\end{equation} 
Furthermore, since
	$\exp(\nicefrac{-s^2}{2})$ is even in $s \in \R$, it follows from 
 \eqref{eq:def:b_n} that 
 	\[
	\begin{split}
	\tfrac{\partial}{\partial t} b_n (t, x) 
	&
	= 
	\int_{-\infty}^{\infty} b\bigl( \min\{T, \max\{s, 0\}\}, x \bigr) n(s-t) \exp\bigl( \tfrac{-n(t-s)^2}{2} \bigr) \,ds
	\\&
	= 
	\bigl( \tfrac{n}{2\pi} \bigr)^{\!\nicefrac12}
	\int_{-\infty}^{\infty} \Bigl[ b\bigl( \min\{T, \max\{s, 0\}\}, x \bigr) - b\bigl( t, x \bigr)
	\Bigr] n(s-t) \exp\bigl( \tfrac{-n(t-s)^2}{2} \bigr) \,ds. 
	\end{split}
	\]
Moreover, one obtains from \eqref{eq:Hoelder-fb} and the fact that  
$|\min\{T, \max\{s, 0\}\} - t| \leq |t-s|$ for all $t \in [0,T]$ and $s \in \R$ that
	\[
	\begin{split}
	|\tfrac{\partial}{\partial t} b_n (t, x)|
	&
	\leq  
	\bigl( \tfrac{n}{2\pi} \bigr)^{\!\nicefrac12}
	\int_{-\infty}^{\infty} c \left| \min\{T, \max\{s, 0\}\} - t \right|^{ \alpha } n |s-t| \exp\bigl( \tfrac{-n(t-s)^2}{2} \bigr) \,ds
	\\
	& \leq 
	c
	\bigl( \tfrac{n}{2\pi} \bigr)^{\!\nicefrac12}
	\int_{-\infty}^{\infty} n |s-t|^{1+\alpha} \exp\bigl( \tfrac{-n(t-s)^2}{2} \bigr) \,ds
	\leq 
	\tfrac{c}{\sqrt{2\pi}} n^{\nicefrac{(1-\alpha)}{2}} \int_{-\infty}^{\infty} 
	|z|^{1+\alpha} \exp\bigl( \tfrac{-|z|^2}{2} \bigr) \,dz
	\end{split}
	\]
	for all $n \in \N$, $t \in [0, T]$ and $x \in \R^d$.
Combining this with the assumption that $b \in C^{0, 2}([0, T]\times\R^d, \R^d)$ 
has bounded partial derivatives of first and second order with 
respect to the $x$-variables and \eqref{eq:def:b_n} ensures that 
	\begin{enumerate}[(a)]
		\item \label{it:differentiability_properties_of_bn}
		for all $n \in \N$, $b_n \in C^{1, 2}([0, T] \times \R^d, \R^d)$ 
		has a bounded partial derivative with respect to $t$ and 
		bounded partial derivatives of first and second order with respect to the 
		$x$-variables,
		\item $\mbox{}$\\[-7mm]
			\begin{equation*} \label{eq:uniform_derivative_bounds_bn_b}
			\sup_{n \in \N} 
			\sup_{(t,x)\in [0,T]\times \R^d} \lVert D_x
			b_n(t,x)\rVert_{L(\R^d,\R^d)} 
			\leq 
			\sup_{(t,x)\in [0,T]\times \R^d}
			\lVert D_x b(t,x)\rVert_{L(\R^d,\R^d)} < \infty,
			\end{equation*}
		\item $\mbox{}$\\[-7mm]
			\begin{equation*} \label{eq:uniform_derivative_bounds_bn_b-2}
			\sup_{n \in \N}
			\sup_{(t,x)\in [0,T]\times \R^d} 
			\lVert D^2_x b_n(t, x) \rVert_{L^{(2)}(\R^d, \R^d)}
			\leq
			\sup_{(t,x)\in [0,T] \times \R^d} 
			\lVert D^2_x b (t,x) \rVert_{L^{(2)}(\R^d,\R^d)} < \infty.
			\end{equation*} 
	\end{enumerate}
Next, note that it follows from \eqref{eq:def:B_n} and the assumption that
$B(t,x) \leq c( 1 + \Vert x \Vert_{\R^d})$ for all $t \in [0, T]$ and $x \in \R^d$ that
	\begin{equation} \label{eq:uniform-linear-bound-from-above-for-Bn}
	\sup_{n \in \N}\sup_{t \in [0, T]}\sup_{x \in \R^d} \left[ \frac{B_n(t, x)}{1 + \Vert x \Vert_{\R^d}} \right] 
	\leq 
	\sup_{t \in [0, T]}\sup_{x \in \R^d} \left[ \frac{B(t, x)}{1 + \Vert x \Vert_{\R^d}} \right] < \infty,
	\end{equation} 	
In addition, one obtains from
\eqref{eq:at_most_polynomial_growth_of_B} and \eqref{eq:def:B_n} that 
	$(B_n)_{n \in \N} \subseteq C^{1, 2}([0, T] \times \R^d, \R)$
and 
	\begin{equation} \label{eq:derivative-bounds-Bn}
	\sup_{n \in \N} \sup_{t \in [0, T]} \sup_{x \in \R^d} 
	\left[ 
	\frac{|B_n(t,x)| + \Vert D_x B_n(t,x) \Vert_{L(\R^d,\R)} 
	+ \Vert D^2_x B_n(t,x) \Vert_{L^{(2)}(\R^d,\R)}}{(1 + \Vert x \Vert_{\R^d})^q}
	\right] < \infty. 
	\end{equation} 
Now, note that it follows from \cref{le:Zakai-smooth-x} and the assumptions on $\varphi$, $b$,
and $B$ that 
	\begin{enumerate}[(a')]
	\item \label{mathfrak-v-smooth-smooth-x-Approx} $u \in C^{0,2}([0,T]\times \R^d,\R)$,
		\item\label{mathfrak-v-Dx-Approx}
			$\frac{\partial}{\partial x} u(t,x) = \E[\tfrac{\partial}{\partial x} \mathcal{U}_0(t,x)]$
			for all $t\in [0,T]$ and $x\in \R^d$,
		and
		\item\label{mathfrak-v-D-2-xx-Approx}
			$\frac{\partial^2}{\partial x^2} u(t,x) = \E[\tfrac{\partial^2}{\partial x^2} 
			\mathcal{U}_0(t,x)]$
			for all $t \in [0,T]$ and $x \in \R^d$.
	\end{enumerate}
By \cref{le:Zakai-smooth-x}, \cref{le:Zakai-Smooth},
\eqref{it:differentiability_properties_of_bn}--\eqref{eq:uniform_derivative_bounds_bn_b-2} and
\eqref{eq:uniform_derivative_bound_varphi_n}--\eqref{eq:derivative-bounds-Bn}, one has
	\begin{enumerate}[(A)]
		\item\label{mathfrak-v-Smooth-Approx} 
			$u_n\in C^{1,2}([0,T]\times \R^d,\R)$ for all $n \in \N$,
		\item\label{mathfrak-v-Dx-Smooth-Approx}
			$\frac{\partial}{\partial x} u_n(t,x)=\E[\tfrac{\partial}{\partial x} 
			\mathcal{U}_n(t,x)]$
			for all $n \in \N$, $t\in [0,T]$ and $x\in \R^d$,
		\item\label{mathfrak-v-D-2-xx-Smooth-Approx}
			$\frac{\partial^2}{\partial x^2} u_n(t,x) 
			= \E[\tfrac{\partial^2}{\partial x^2} \mathcal{U}_n(t,x)]$ 
			for all $n \in \N$, $t\in [0,T]$ and $x\in \R^d$,
		and 
		\item\label{PDE-Smooth-item-Approx} $\mbox{}$\\[-13mm]
					\begin{multline}
			u_n(t,x) 
			=
			\varphi_n(x)
			\\
			+ 
			\int_0^t
			\Bigl[ 
			\tfrac{1}{2} \operatorname{Trace}_{\R^d}\bigl( \sigma \sigma^{*}(\operatorname{Hess}_x u_n)(s,x)\bigl) +
			\left\langle b_n(s,x),(\nabla_x u_n)(s,x)\right\rangle_{\Rd}
			+ B_n(s,x) \, u_n(s,x)
			\Bigr]
			\,ds
			\end{multline}
	\end{enumerate}
	for all $n \in \N$, $t \in [0,T]$ and $x\in \R^d$.
From \cref{le:Mollification}.\eqref{it:Mollifier:convergence-Bn_B} 
together with \eqref{eq:Hoelder-fb}, \eqref{eq:def:b_n}, \eqref{eq:uniform_derivative_bounds_bn_b} and \eqref{eq:uniform_derivative_bounds_bn_b-2} one obtains
\begin{enumerate}[(A')]
		\item \label{it:lim-b_n-b}
		$\limsup\nolimits_{n \to \infty} 
		\sup_{(t,x)\in [0,T]\times \R^d}  \Vert b_n(t,x)- b(t,x) \Vert_{\R^d} = 0$, 
		\item \label{it:lim-D_x-b_n-D_x-b}
		$\limsup\nolimits_{n \to \infty} \sup_{(t,x)\in [0,T]\times \R^d} 
		\Vert D_x b_n(t,x) - D_x b(t,x)\Vert_{L(\R^d,\R^d)} =0$,
		and 
		\item \label{it:lim-D_xx-b_n-D_xx-b}
		$\limsup\nolimits_{n \to \infty} \sup_{(t,x)\in [0,T]\times \R^d} 
		\Vert D^2_x b_n(t,x)
		- D^2_x b (t,x)\Vert_{L^{(2)}(\R^d,\R^d)} =0$.			
	\end{enumerate}
	Combining \cref{le:Y_processes_stability}.\eqref{le:Y_processes_stability:item1} with
\eqref{it:lim-b_n-b} gives
	\begin{equation}\label{eq:lim-Y_n-Ynn}
	\begin{split}
	&\limsup\nolimits_{n \to \infty} 
		\sup\nolimits_{t \in [0,T]} 
		\sup\nolimits_{s \in [0,t]}
		\sup\nolimits_{x \in \R^d}
	\Vert R^{n,t,x}_s - R^{t,x}_s \Vert_{\R^d} \\
	&\leq 
	T 
	\exp\bigl(T \sup\nolimits_{(r, y)\in [0,T]\times \R^d} 
	\big\Vert D_x b(r, y)\Vert_{L(\R^d,\R^d)}
	\bigr) \\
	& \quad \times
	\limsup\nolimits_{n\to\infty}
	\sup\nolimits_{(r, y)\in [0,T]\times \R^d}  \Vert b_n(r, y)- b(r, y) \Vert_{\R^d} 
	= 0.
	\end{split}
	\end{equation}
	Hence, we obtain from \cref{le:sde-solution-exponential-integrability} that	%
	\begin{multline}\label{eq:delaVaPou-cY-n-Approx}
	\sup\nolimits_{n\in \N}
	\E\Bigl[\exp\bigl(p 
		\sup\nolimits_{t\in [0,T]}
		\sup\nolimits_{s\in [0,t]}
		\sup\nolimits_{x\in [-p,p]^d}
		\lVert \cY^{n,t,x}_s \rVert_{\R^d} \bigr)\Bigr]\\
	\leq 
	\sup\nolimits_{n\in \N}
	\E\Bigl[\exp\bigl(p 
		\sup\nolimits_{t\in [0,T]}
		\sup\nolimits_{s\in [0,t]}
		\sup\nolimits_{x\in [-p,p]^d}
		\lVert \cY^{n,t,x}_s -\cY^{t,x}_s \rVert_{\R^d} \bigr) 
	\\
	\times \exp\bigl(p
		\sup\nolimits_{t\in [0,T]}
		\sup\nolimits_{s\in [0,t]}
		\sup\nolimits_{x\in [-p,p]^d}
		\lVert R^{t,x}_s \rVert_{\R^d}\bigr)\Bigr]
	<\infty \quad \mbox{for all $p \in (0,\infty)$.}
	\end{multline}
Moreover, \cref{le:loc-unif-conv}, \eqref{it:lim-D_x-b_n-D_x-b}, and \eqref{eq:lim-Y_n-Ynn} 
yield 
	\begin{equation}
	\label{eq:lim-bn-x-b-x}
	\limsup_{n \to \infty} 	
	\sup_{t \in [0, T]} 
	\sup_{s \in [0, t]} 
	\sup_{x \in [-p, p]^d} 
	\big\Vert D_x b_n(t-s, R^{n, t,x}_s) 
	- D_x b(t-s,R^ {t,x}_s)\big\Vert_{L(\R^d,\R^d)}
	= 0
	\end{equation}
	 for all $p \in (0,\infty)$, which together with
\cref{le:Y_processes_stability}.\eqref{le:Y_processes_stability:item2},\eqref{eq:uniform_derivative_bounds_bn_b} and \eqref{eq:delaVaPou-cY-n-Approx} 
shows that 
	\begin{equation} \label{eq:lim_Dx_Yn-Dx_Y}
	\limsup_{n \to \infty} 
	\sup_{t \in [0, T]} 
	\sup_{s \in [0, t]} 
	\sup_{x \in [-p, p]^d} 
	\lVert \tfrac{\partial}{\partial x_i} \cY^{n, t, x}_s - \tfrac{\partial}{\partial x_i} R^{t, x}_s \rVert_{\R^d}
	= 0
	\end{equation} 
for all $i \in \{1, 2, \dots, d\}$ and $p \in (0, \infty)$.
Furthermore, it follows from \cref{le:Y-differentiability-properties}.\eqref{eq:delaVaPou-x-cY} 
and \eqref{eq:uniform_derivative_bounds_bn_b} that
	\begin{equation}
	\label{eq:partial-Y-n-Y-sum}
	\begin{split}
	&
	\sup\nolimits_{n \in \N}
	\sup\nolimits_{t \in [0,T]}
	\sup\nolimits_{s \in [0,t]} 
	\sup\nolimits_{x \in \R^d} 
	\Bigl[
	\lVert \tfrac{\partial}{\partial x_i} R^{t,x}_s \rVert_{\R^d}
	+
	\lVert \tfrac{\partial}{\partial x_i} \cY^{n,t,x}_s \rVert_{\R^d}
	\Bigr]
	\\
	& \leq 
	2 \exp\bigl(
	T \sup\nolimits_{(t,x)\in [0,T]\times \R^d}
	\Vert D_x b(t,x)\Vert_{L(\R^d,\R^d)} \bigr)
	<\infty \quad \mbox{for all $i \in \{1, 2, \dots, d\}$.}
	\end{split}
	\end{equation} 
By \cref{le:loc-unif-conv}, \eqref{it:lim-D_xx-b_n-D_xx-b}, and \eqref{eq:lim-Y_n-Ynn},
we obtain that	
\[
	\limsup_{n \to \infty} 	
	\sup_{t \in [0, T]} 
	\sup_{s \in [0, t]} 
	\sup_{x \in [-p, p]^d} 
	\big\Vert D^2_x b_n(t-s, R^{n, t, x}_s) 
	- D^2_x b (t-s,R^ {t,x}_s)\big\Vert_{L^{(2)}(\R^d,\R^d)} = 0
	\]
	for all $p \in (0, \infty)$.
\cref{le:Y_processes_stability}.\eqref{le:Y_processes_stability:item3} \eqref{eq:uniform_derivative_bounds_bn_b}, \eqref{eq:uniform_derivative_bounds_bn_b-2}, and \eqref{eq:lim-bn-x-b-x} hence assure that 
	\begin{equation} 
	\label{eq:lim_Dxx_Yn-Dxx_Y}
	\limsup_{n \to \infty} 
		\sup_{t \in [0, T]} 
		\sup_{s \in [0, t]} 
		\sup_{x \in [-p, p]^d} 
		\lVert \tfrac{\partial^2}{\partial x_i\partial x_j} R^{n,t,x}_s 
		- \tfrac{\partial^2}{\partial x_i\partial x_j} R^{t,x}_s \rVert_{\R^d} = 0
	\end{equation} 
	for all $i,j \in \{1, 2, \ldots, d\}$ and $p \in (0, \infty)$.
	Moreover, it follows from \cref{le:Y-differentiability-properties}.\eqref{eq:delaVaPou-x-x-cY},
	\eqref{eq:uniform_derivative_bounds_bn_b} and \eqref{eq:uniform_derivative_bounds_bn_b-2}
	that	\begin{equation}
	\label{eq:delaVaPou-x-x-cY-n-approx}
	\begin{split}
	&
	\sup\nolimits_{n \in \N} 
	\sup\nolimits_{t \in [0,T]}
	\sup\nolimits_{s \in [0,t]} 
	\sup\nolimits_{x \in \R^d} 
	\bigl[ \lVert \tfrac{\partial^2}{\partial x_i \partial x_j} \cY^{t,x}_s \rVert_{\R^d}
	+
	\lVert \tfrac{\partial^2}{\partial x_i \partial x_j} \cY^{n,t,x}_s \rVert_{\R^d}
	\bigr]
	\\
	& \leq
	2T 
	\bigl[
	\sup\nolimits_{(t,x)\in [0,T] \times \R^d} 
		\lVert D^2_x b(t,x) \rVert_{L^{(2)}(\R^d,\R^d)}
	\bigr]
	\exp\bigl( 3T \sup\nolimits_{(t,x)\in [0,T]\times \R^d}\Vert D_x
	b(t,x)\Vert_{L(\R^d,\R^d)} \bigr)
	\\
	&<\infty \quad \mbox{for all $i,j \in \{1,2, \dots, d\}$.}
	\end{split}
	\end{equation}
	Next, note that \cref{le:Mollification}, \cref{le:loc-unif-conv}, \eqref{eq:lim-Y_n-Ynn}, 
	and \eqref{eq:delaVaPou-cY-n-Approx}
	 ensure that 
	\begin{equation} \label{eq:V_n-V}
	\limsup_{n \to \infty} 
		\sup_{t \in [0, T]} 
		\sup_{x \in [-p, p]^d} 
		|\mathcal{U}_n(t, x) - \mathcal{U}_0(t, x) | = 0 \quad \mbox{for all $p \in (0, \infty)$.}
	\end{equation}
Moreover, it follows from \cref{le:sde-solution-exponential-integrability}  together with \eqref{eq:at_most_polynomial_growth_of_varphi},  
\eqref{eq:uniform_derivative_bound_varphi_n},
	\eqref{eq:uniform-linear-bound-from-above-for-Bn}
	 and 
	\eqref{eq:delaVaPou-cY-n-Approx} that
	\[
\sup_{n \in \N} \E \left[ \sup_{t \in [0,T]} | \mathcal{U}_n(t, x) - 
\mathcal{U}_0(t, x) |^p \right] < \infty \quad
\mbox{for all $x \in \R^d$ and $p \in (0, \infty)$.}
\]
Combining this and \eqref{eq:V_n-V} with the de la Vall\'ee Poussin theorem 
(cf., e.g., \cite[Corollary~6.21]{Klenke_2014}) and the Vitali convergence theorem (cf., e.g., \cite[Theorem~6.25]{Klenke_2014}) implies that 
	\begin{equation}
	\label{eq:lim-v_n-v}
	\limsup_{n\to \infty} 
		\sup_{t \in [0,T]} 
			| u_n(t, x) - u(t, x) | 
	\leq 
	\limsup_{n\to \infty} 
	\E\left[ \sup\nolimits_{t \in [0,T]} |\mathcal{U}_n(t, x) - \mathcal{U}_0(t, x) | \right] 
	= 0 
	\end{equation}
	for all for $x\in \R^d$.
Next, note that \cref{le:Mollification}, \cref{le:loc-unif-conv}, \cref{le:V}.\eqref{eq:partial-x-V},
\eqref{eq:Hoelder-fB}, \eqref{eq:at_most_polynomial_growth_of_varphi}, \eqref{eq:lim-Y_n-Ynn}, and \eqref{eq:lim_Dx_Yn-Dx_Y} ensure that 
	\begin{equation} \label{eq:lim_Dx_Vn-Dx_V}
	\limsup_{n \to \infty} 
		\sup_{t \in [0, T]} 
		\sup_{x \in [-p, p]^d} 
	\lVert \tfrac{\partial}{\partial x}\mathcal{U}_n(t, x) 
	- \tfrac{\partial}{\partial x} \mathcal{U}_0(t, x) \rVert_{L(\R^d,\R)}
	= 0 \quad \mbox{for all } p \in (0,\infty).
	\end{equation} 
Moreover, \eqref{eq:at_most_polynomial_growth_of_varphi},  \eqref{eq:uniform_derivative_bound_varphi_n},
\eqref{eq:uniform-linear-bound-from-above-for-Bn}, 
\eqref{eq:delaVaPou-cY-n-Approx}, \eqref{eq:partial-Y-n-Y-sum}, \cref{le:sde-solution-exponential-integrability}, and 
\cref{le:V}.\eqref{eq:partial-x-V} imply that
\[
\sup_{n \in \N} \E \left[ \sup_{t \in [0,T]} \lVert \tfrac{\partial}{\partial x}
\mathcal{U}_n(t, x) - \tfrac{\partial}{\partial x} \mathcal{U}_0(t, x) 
\rVert_{L(\R^d,\R)}^p \right] < \infty \quad
\mbox{for all $x \in \R^d$ and $p \in (0,\infty)$.}
\]
It therefore follows from
\eqref{mathfrak-v-smooth-smooth-x-Approx}--\eqref{mathfrak-v-Dx-Approx},
\eqref{mathfrak-v-Smooth-Approx}--\eqref{mathfrak-v-Dx-Smooth-Approx},
\eqref{eq:lim_Dx_Vn-Dx_V}, 
the de la Vall\'ee Poussin theorem (cf., e.g., \cite[Corollary~6.21]{Klenke_2014}),
and 
the Vitali convergence theorem (cf., e.g., \cite[Theorem~6.25]{Klenke_2014}) that
	\begin{equation}
	\label{eq:lim-gradient-v_n-v}
	\limsup_{n\to \infty}
		\sup_{t \in [0, T]} 
		\lVert \tfrac{\partial}{\partial x} u_n(t,x)
		- \tfrac{\partial}{\partial x} u (t,x)\rVert_{\R^d}
		= 0 \quad \mbox{for all $x \in \R^d$.}
	\end{equation}
Similarly, we obtain from
\cref{le:Mollification}, \cref{le:loc-unif-conv}, \cref{le:V}.\eqref{eq:partial-x-x-V},
\eqref{eq:Hoelder-fB}, \eqref{eq:at_most_polynomial_growth_of_varphi}, \eqref{eq:lim-Y_n-Ynn}, \eqref{eq:lim_Dx_Yn-Dx_Y},
and
\eqref{eq:lim_Dxx_Yn-Dxx_Y} that
	\begin{equation} \label{eq:lim_Dxx_Vn-Dxx_V}
	\limsup_{n \to \infty} 
		\sup_{t \in [0, T]} 
		\sup_{x \in [-p, p]^d} 
		\lVert \tfrac{\partial^2}{\partial x^2}\mathcal{U}_n(t, x) 
		- \tfrac{\partial^2}{\partial x^2}\mathcal{U}_0(t, x) \rVert_{L^{(2)}(\R^d, \R^d)} 
	= 0 \quad \mbox{for all $p \in (0,\infty)$.}
	\end{equation} 
Moreover, observe that \eqref{eq:at_most_polynomial_growth_of_varphi},  
\eqref{eq:uniform_derivative_bound_varphi_n}, 
\eqref{eq:uniform-linear-bound-from-above-for-Bn}, 
\eqref{eq:delaVaPou-cY-n-Approx}, \eqref{eq:partial-Y-n-Y-sum}, 
\eqref{eq:delaVaPou-x-x-cY-n-approx}, 	
\cref{le:sde-solution-exponential-integrability}, and \cref{le:V}.\eqref{eq:partial-x-x-V} show that 
	\[
	\sup_{n \in \N} \E \left[
	\sup_{t \in [0, T]} \lVert
	\tfrac{\partial^2}{\partial x^2} \mathcal{U}_n(t, x) 
	- \tfrac{\partial^2}{\partial x^2} \mathcal{U}_0(t, x) \rVert_{L^{(2)}(\R^d, \R^d)}^p
	\right] \allowbreak < \infty \quad \mbox{for all $x \in \R^d$ and $p \in (0,\infty)$.}
	\]
\eqref{mathfrak-v-smooth-smooth-x-Approx}, \eqref{mathfrak-v-D-2-xx-Approx},
\eqref{mathfrak-v-Smooth-Approx}, \eqref{mathfrak-v-D-2-xx-Smooth-Approx}, 
\eqref{eq:lim_Dxx_Vn-Dxx_V}, 
	the de la Vall\'ee Poussin theorem (cf., e.g., \cite[Corollary~6.21]{Klenke_2014}),
	and 
	the Vitali convergence theorem (cf., e.g., \cite[Theorem~6.25]{Klenke_2014}) 
	hence imply that 	\begin{equation}
	\label{eq:lim-x-x-v_n-v}
	\limsup_{n\to \infty}
	\sup_{t\in [0,T]}
		\lVert \tfrac{\partial^2}{\partial x^2} u_n(t,x)
		- \tfrac{\partial^2}{\partial x^2} u(t,x)\rVert_{L^{(2)}(\R^d, \R^d)} 
	= 0 \quad \mbox{for all } x \in \R^d.
	\end{equation}
Moreover, note that \eqref{eq:def:varphi_n} and \cref{le:Mollification}.\eqref{it:Mollifier:convergence-Gn-G} ensure that
	\[
	\limsup_{n \to \infty} |\varphi_n (x) - \varphi(x) | = 0 \quad \mbox{for all $x \in \R^d$.}
	\]
It therefore follows from 
\eqref{PDE-Smooth-item-Approx}, 
	\eqref{eq:lim-v_n-v}, \eqref{eq:lim-gradient-v_n-v}, and \eqref{eq:lim-x-x-v_n-v} that 
	\[
	\label{PDE-v-Approx}
	\begin{split}
	&u(t,x)\\
	&=
	\varphi(x)
	+ 
	\int_0^t
	\Bigl[ 
	\tfrac{1}{2} \operatorname{Trace}_{\R^d}\bigl( \sigma \sigma^{T} \operatorname{Hess}_x 
	(u)(s,x) \bigr)
	+
	\left \langle u(s,x), \nabla_x u(s,x)\right\rangle_{\Rd}
	+ B(s, x) \, u(s, x) 
	\Bigr] \,ds 
	\end{split}
	\]
	for all $t\in [0,T]$ and $x\in \R^d$,
which establishes \eqref{mathfrak-v-determ-approx}.
Moreover, \eqref{mathfrak-v-determ-approx} and 
\eqref{mathfrak-v-smooth-smooth-x-Approx} imply \eqref{mathfrak-v-smooth-approx},
which finishes the proof of the proposition.
\end{proof}

\subsection{Proof of \cref{thm:main}}
\label{sec:proofthm}

Consider the random field
\[
	I(t, x) = \langle h(x), Z_t \rangle_{\R^k}, \quad t \in [0,T], \, x \in \R^d,
	\]
and define the operator $\mathcal{L}\colon C^{2}(\R^d,\R) \to C(\R^d,\R)$ by
	\begin{equation} \label{eq:def-L-operator}
	\cL w(x)
	= \tfrac{1}{2}\operatorname{Trace}_{\R^d}\bigl( \sigma \sigma^{T} 
	\operatorname{Hess} (w)(x) \bigr) 
	-
	\langle \mu(x), \nabla w(x) \rangle_{\R^d}, \quad
	w \in C^2(\R^d, \R), \, x \in \R^d.
	\end{equation}
Let the mappings $b_z \colon [0,T]\times \R^d \to \R^d$, $z \in C([0,T],\R^d)$, be given by
	\begin{equation} \label{eq:def:fb}
	b_z(t,x) = \sigma \sigma^T [D h(x)]^T z(t) -\mu(x), \quad 
	z \in C([0,T],\R^k), \, t \in [0,T], \, x\in \R^d,
	\end{equation}	
	and define the random field
	\[
	\fu = (\fu(t, x))_{t \in [0, T], x \in \R^d} 
	= (\fu(t, x, \omega))_{t \in [0, T], x \in \R^d, \omega\in\Omega} 
	\colon [0,T]\times \R^d \times \Omega \to \R	\]
 by
	\[
	\fu(t, x, \omega) = u_{ Z(\omega)}(t, x), \quad t \in [0,T], \, x \in \R^d, \,
	\omega \in \Omega,
	\]
where $u_z$ is given by \eqref{eq:u}.
	Observe that it follows from \eqref{eq:YZ}--\eqref{eq:Zakai:definition_X}
	that for all $t\in [0,T]$ and $x\in \R^d$, $X_t(x) = \fu(t, x) e^{I(t,x)}$ 
	is $\F_t$/$\B(\R)$-measurable. So it satisfies \eqref{it:Zakai-1}.
	
	Next, note that it follows from the assumptions on 
	$\mu\in C^3(\R^d,\R^d)$ and $h\in C^4(\R^d,\R^d)$  
	together with \eqref{eq:mathfrak-B} 
	that 
	\begin{enumerate}[(a)]
	\item \label{it:bv_C02_with_bounded_derivatives}
		for every $z \in C([0,T],\R^k)$,
		$b_z \in C^{0,2}([0,T]\times\R^d, \R^d)$ has bounded partial derivatives
		of first and second order with respect to the $x$-variables,
		\item \label{it:B}
		$\sup_{(t,x)\in[0,T]\times \R^d} \tfrac{B_z (t,x)}{1 + \|x\|_{\R^d}} <\infty$
		for all $z \in C([0,T],\R^d)$, and
		\item \label{it:Bvy_C02_with_at_most_polynomially_growing_derivatives}
		for all $z \in C([0,T],\R^k)$, the mapping
		$B_z\in C^{0,2}([0,T]\times\R^d, \R)$ has at most polynomially 
		growing partial derivatives of first and second order with respect to the $x$-variables.
	\end{enumerate} 
	Moreover, note that it follows from \eqref{eq:YZ} that
	\[
	\inf_{\alpha \in (0,1)} 
	\sup_{s\neq t \in [0,T]} \tfrac{\Vert Z_t - Z_s\Vert_{\R^k}}{|t-s|^\alpha} 
	< \infty \quad \mbox{$\P$-a.s.}
	\]
	This, \eqref{eq:mathfrak-B} and \eqref{eq:def:fb} assure that for $\P$-a.a.
	$\omega \in \Omega$ the following hold:
	\begin{align*}
	& 
	\inf_{\alpha \in (0, 1)}
	\sup_{s \neq t \in [0,T]} 
	\sup_{x \in \R^d}
	\frac{\lVert b_{Z(\omega)}(t,x)- b_{Z(\omega)}(s,x)\rVert_{\R^d} }{|t-s|^\alpha}<\infty,\\
	& 
	\inf_{\alpha \in (0, 1)}
	\sup_{n \in \{1, 2\}} 
	\sup_{s \neq t \in [0,T]} 
	\sup_{x \in \R^d}
	\frac{\lVert \tfrac{\partial^n}{\partial x^n} b_{Z(\omega)}(t,x)-
	\tfrac{\partial^n}{\partial x^n}b_{Z(\omega)}(s,x)\rVert_{L^{(n)}(\R^d,\R^d)}
	}{|t-s|^\alpha}<\infty,\\
	&
	\inf_{\alpha \in (0, 1)}
	\sup_{s\neq t \in [0,T]} \sup_{x \in \R^d}
	\frac{|B_{Z(\omega)}(t,x)- B_{Z(\omega)}(s,x)| }{(1+\Vert x \Vert_{\R^d})|t-s|^\alpha}<\infty,\\
	&
	\inf_{\alpha \in (0, 1)}
	\sup_{n\in \{1,2\}} \sup_{s\neq t \in [0,T]} \sup_{x \in \R^d}
	\frac{\lVert \tfrac{\partial^n}{\partial x^n} B_{Z(\omega)}(t,x)-
	\tfrac{\partial^n}{\partial x^n}B_{Z(\omega)}(s,x)\rVert_{L^{(n)}(\R^d,\R)}  }{(1
	+\Vert x \Vert_{\R^d})|t-s|^\alpha}<\infty,\\
	&
	\sup_{(t,x) \in [0,T]\times \R^d} \tfrac{B_{Z(\omega)}(t,x)}{1+\Vert x \Vert_{\R^d}} < \infty.
	\end{align*}
\eqref{it:bv_C02_with_bounded_derivatives}--\eqref{it:Bvy_C02_with_at_most_polynomially_growing_derivatives}, the assumption that 
	$\varphi \in C^2(\R^d,[0,\infty))$ has at most polynomially growing derivatives
	up to the second order, \cref{le:sde-solution-exponential-integrability},
	\cref{le:Zakai-smooth-x}, and \cref{prop:Zakai-approx} (with 
	$b \leftarrow b_{V(\omega)}$,
	$R^{t,x} \leftarrow R^{V(\omega),t,x}$,
$\varphi \leftarrow \varphi$,
	$B \leftarrow B_{V(\omega),Y(\omega)}$,
	$u \leftarrow u_{V(\omega),Y(\omega)}$
	in the notation of \cref{le:Zakai-smooth-x} and \cref{prop:Zakai-approx}) 
	ensure the following:
	\begin{enumerate}[(A)]
		\item \label{mathfrak-u-A} 
		for all $\omega \in \Omega$, the mapping
		$(t, x) \mapsto \fu(t, x, \omega)$ is in 
		$C^{0,2}([0,T] \times \R^d,\R)$ and there exist constants 
		$a(\omega), c(\omega) \in \R$ such that 
		\[
		\sup_{t \in [0,T]} |\fu(t, x, \omega)| \le a(\omega) e^{c(\omega) \| x\|_{\R^d}}
		\quad \mbox{for all $x \in \R^d$,}
		\]
		\item \label{mathfrak-u-C} 
		for $\P$-a.a.\ $\omega \in \Omega$, the mapping
		$(t, x) \mapsto \fu(t, x, \omega)$ is in $C^{1,2}([0,T]\times \R^d,\R)$, 
		and
		\item \label{mathfrak-u-D} $\mbox{}$\\[-14mm]
		\begin{align*} \label{nonlinearRPDE-Det}
		& \fu(t, x, \omega) 
		=
		u_{Z(\omega)}(t,x)
		=
		\varphi(x)
		+ 
		\int_0^t
		\Bigl[ 
		\tfrac{1}{2} \operatorname{Trace}_{\R^d} \bigl( \sigma \sigma^{T} 
		\operatorname{Hess}_x (\fu)(s, x, \omega) \bigr)\\
		& +
		\left\langle b_{Z(\omega)}(s,x) , \nabla_x \fu(s,x,\omega)\right\rangle_{\Rd}
		+ B_{Z(\omega)}(s,x) \fu(s,x,\omega)
		\Bigr] \,ds 
		\end{align*}
		for all $t \in [0,T]$, $x\in \R^d$ and $\P$-almost all $\omega \in \Omega$.
	\end{enumerate}
	It follows from the assumptions on $h\in C^4(\R^d,\R^d)$ that
	for all $\omega \in \Omega$, the mapping
	$(t, x) \mapsto I(t, x, \omega)$ is in $C^{0,2}([0, T] \times \R^d, \R)$.
	Moreover, since $X_t(x) = \fu(t, x) e^{I(t, x)}$ for all $t \in [0,T]$ and $x \in \R^d$,
	we obtain from \eqref{mathfrak-u-A} that
 for every $\omega \in \Omega$, the mapping 
$(t,x)\mapsto X_t(x,\omega)$ is in $C^{0,2}([0,T]\times\R^d,\R)$ and there exist constants 
		$a(\omega), c(\omega) \ge 0$ such that 
		\[
		\sup_{t \in [0,T]} |X(t, x, \omega)| \le a(\omega) e^{c(\omega) \| x\|_{\R^d}}
		\quad \mbox{for all $x \in \R^d$.}
		\]
Hence, $X$ fulfils \eqref{it:Zakai-2}.	
	
	Next, note that it follows from
	\be \label{nablaI}
	\nabla_x I(t, x) = [Dh(x)]^T Z_t \quad \mbox{$t \in [0, T]$, $x \in \R^d$,}
	\ee
	\eqref{eq:def-L-operator} and \eqref{mathfrak-u-D} that 
	\be \label{eq:nonlinearRPDE-Det-umgeschrieben}
	\fu(t, x) 
	=
	\varphi(x)
	+ 
	\int_0^t \bigl[ \cL_x \fu (s, x) 
	+
	\left\langle \sigma \sigma^T \nabla_x I(s, x), \nabla_x \fu(s,x)\right\rangle_{\Rd}
	+ B_{Z}(s,x) \, \fu(s, x)
	\bigr]
	\,ds \quad \mbox{$\P$-a.s.}
	\ee
	for all $t \in [0,T]$ and $x \in \R^d$.
	Now, consider the random field
		\begin{equation}\label{def:A-operator}
	\mathfrak{v}(t,x) =	e^{-I(t,x)} \cL X_t(x), \quad 
	t \in [0,T], \, x \in \R^d.
	\end{equation}
	Since
	$X_t(x) = \fu(t, x) e^{I(t, x)} $ for all $t \in [0, T]$ and $x \in \R^d$, we obtain from
	\eqref{nablaI} and \cref{le:L-operator} below 
	(applied for every $t \in [0, T]$ and $\omega \in \Omega$ 
	with $a \leftarrow \sigma \sigma^{T}$, $\nu \leftarrow \mu$,
	$f \leftarrow I(t,.,\omega)$,
	$g \leftarrow \fu(t, ., \omega)$ in the notation of \cref{le:L-operator}) that
		\[ \label{le:eq:L-operator-2}
	\begin{split}
	\mathfrak{v}(t, x)
	&= 
	\cL_x \fu(t,x) 
	+ \left\langle \sigma \sigma^{T} \nabla_x I(t, x), \nabla_x \fu(t, x) \right\rangle_{\Rd} 
	\\
	&\quad
	+ \bigl[ \tfrac{1}{2}\left\langle \sigma \sigma^{T} \nabla_x I(t, x), \nabla_x I(t, x) 
	\right\rangle_{\Rd} + \cL_x I(t,x)\bigr] \fu(t, x) \quad \mbox{for all $t \in [0,T]$ and $x \in \R^d$.}
	\end{split}
	\]
	So it follows from \eqref{eq:nonlinearRPDE-Det-umgeschrieben} that
	\[
	\label{eq:RPDE-trans1}
	\begin{split}
	& \fu(t, x) 
	\\
	& =
	\varphi(x)
	+ 
	\int_0^t
	\bigl[ \mathfrak{v}(s, x) 
+ \crl{ B_{Z}(s,x) - \tfrac{1}{2}\left\langle \sigma \sigma^T \nabla_x I(s, x), \nabla_x I(s, x) \right\rangle_{\Rd} - \cL_x I(s,x)} \fu(s, x) 
	\bigr] \,ds
	\end{split}
	\]
	$\P$-a.s. for all $t \in [0,T]$ and $x \in \R^d$.
	Combining this and \eqref{eq:mathfrak-B} with \eqref{nablaI} shows that 
		\be \label{uh}
	\fu(t,x) 
	=
	\varphi(x) + \int_0^t \bigl[ \mathfrak{v}(s,x) 
	- \crl{ \tfrac12 \lVert h(x) \rVert_{\R^k}^2 + \operatorname{div} (\mu)(x) } \fu(s, x) \bigr] \,ds \quad \mbox{$\P$-a.s.}
	\ee
	for all $t \in [0,T]$ and $x \in \R^d$.
	Next, note that since $I(t, x) = \langle h(x), Z_t \rangle_{\R^k}$, $t \in [0,T]$, $x \in \R^d$,
	we obtain from It\^o's formula that
		\be \label{eI}
	e^{I(t,x)}
	=
	1
	+
	\int_0^t
	e^{I(s,x)}\Bigl(\langle h(x), dZ_s\rangle_{\R^k}
	+ \tfrac{1}{2}\lVert h(x) \rVert_{\R^k}^2 \Bigr) ds \quad
	\mbox{$\P$-a.s for all $t \in [0,T]$ and $x \in \R^d$.}
	\ee
Since $X_t(x) = \fu(t, x) e^{I(t, x)}$ for all $t \in [0,T]$ and $x \in \R^d$, we obtain 
from \eqref{eI}--\eqref{uh} and It\^o's formula that
	\[	
	\begin{split}
	X_t( x ) 
	=
	\varphi( x ) 
	&+ 
\int_0^t e^{I(s, x)} \biggl[ \mathfrak{v}(s,x) -
\Bigl( \tfrac12 \lVert h(x) \rVert_{\R^k}^2 +
\operatorname{div} (\mu)(x) \Bigr) \fu(s, x) \biggr]\,ds
	\\& 
	+ \int_0^t \fu(s, x) \, e^{I(s,x)}\Big(\langle h(x), dZ_s\rangle_{\R^k}
	+ \tfrac{1}{2} \lVert h(x) \rVert_{\R^k}^2 \,ds\Big) \quad
	\mbox{$\P$-a.s.}
	\end{split} 
	\]
	for all $t \in [0,T]$ and $x \in \R^d$. Moreover, it follows from
	\eqref{def:A-operator} that
	\[
		X_t(x) 
	= 
	\varphi(x) 
	+ \int_0^t \bigl[ \cL X_s(x) - X_s(x) \operatorname{div} (\mu)(x) \bigr] 
	+ \int_0^t X_s(x) \langle h(x), dZ_s \rangle_{\R^k} \quad
	\mbox{$\P$-a.s.}
	\]
	for all $t \in [0,T]$ and $x \in \R^d$.
	This, \eqref{it:Zakai-2}, the fact that 	
	\[
	\operatorname{div} (\mu X_s )(x) 
	= \langle \mu(x), \nabla X_s(x) \rangle_{\R^d} 
	+ X_s(x) \operatorname{div} (\mu)(x) \quad \mbox{for all $s \in [0, T]$ and $x \in \R^d$,}
	\]
	\eqref{eq:YZ} and \eqref{eq:def-L-operator} show that 
		\[
	\begin{split}
	X_t(x) 
	&= \varphi(x)
	+
	\int_0^t \Bigl[\tfrac12 \operatorname{Trace}_{\R^d}
	\bigl( \sigma \sigma^T \operatorname{Hess} (X_s)(x) \bigr) - \operatorname{div} (\mu X_s)(x) \Bigr] \,ds 
	+ \int_0^t X_s( x ) \langle h(x), dZ_s\rangle_{\R^d}
	\end{split}
	\]
	$\P$-a.s. for all $t \in [0, T]$ and $x \in \R^d$, which shows that $X$ satisfies
	\eqref{it:Zakai-3}.
	
	To show uniqueness, let us assume that $\tilde{X} \colon [0,T] \times \R^d \times \Omega \to \R$ 
	is another random field satisfying \eqref{it:Zakai-1}--\eqref{it:Zakai-3} of 
	Theorem \ref{thm:main}. Then one obtains from 
	the preceding arguments in the reverse order that for $\P$-a.a. $\omega \in \Omega$, 
	the function $(t,x) \mapsto \tilde{\fu}(t,x,\omega) = \tilde{X}_t(x, \omega) e^{-I(t,x, \omega)}$ 
	belongs to $C^{0,2}([0,T]\times \R^d,\R)$ and satisfies \eqref{mathfrak-u-D}. 
	Therefore, since by assumption, $h$ has
	bounded derivatives, there exists a subset $\tilde{\Omega} \subseteq \Omega$ with $\P[\tilde{\Omega}] = 1$ such that for all $\omega \in \tilde{\Omega}$, 
$\tilde{\fu}$ satisfies \eqref{mathfrak-u-C}--\eqref{mathfrak-u-D} and there exist 
constants $a(\omega), c(\omega) \ge 0$ such that 
\be \label{tildeubound}
\sup_{t \in [0,T]} |\tilde{\fu}(t,x,\omega)| \le a(\omega) e^{c(\omega) \|x \|_{\R^d}}
	\quad \mbox{for all } x \in \R^d.
\ee
Now, fix an $\omega \in \tilde{\Omega}$ and set $z = Z(\omega)$.
Then, the function $(t,x) \mapsto \tilde{u}(t,x) = \tilde{\fu}(t,x,\omega)$
belongs to $C^{1,2}([0,T]\times \R^d,\R)$ and satisfies
		\be \label{tildeu}
		\begin{aligned}
		& \tilde{u}(t,x)		=
		\varphi(x)
		+ 
		\int_0^t
		\Bigl[ 
		\tfrac{1}{2} \operatorname{Trace}_{\R^d} \bigl( \sigma \sigma^{T} 
		\operatorname{Hess}_x (\tilde{u})(s, x) \bigr)\\
		& +
		\left\langle b_z(s,x), \nabla_x \tilde{u}(s,x)\right\rangle_{\Rd}
		+ B_z(s,x) \tilde{u}(s,x)
		\Bigr] \,ds \quad \mbox{$\P$-a.s.}
		\end{aligned}
		\ee
		for all $t \in [0,T]$ and $x\in \R^d$, where 
		$b_z \colon [0,T]\times \R^d \to \R^d$ is given by \eqref{eq:def:fb} above.
By \eqref{tildeu}, we obtain from It\^{o}'s formula that
		\be \label{tildeuIto} \begin{aligned}
		& d \edg{\tilde{u} \brak{t-s, R^{z,t,x}_s} \exp \brak{\int_0^s B_{z}(t-r, R^{z,t,x}_r) dr} }
	\\ & \; \; =
		\left\langle b_z(t-s, R^{z,t,x}_s), \nabla_x \tilde{u}(t-s, R^{z,t,x}_s)\right\rangle_{\Rd} \exp \brak{\int_0^s B_{z}(t-r, R^{z,t,x}_r) dr} 
		\sigma dU_s
		\end{aligned} 
		\ee
for all $t \in [0,T]$, $s \in [0,t]$ and $x \in \R^d$. Using the stopping times
\[
\tau_n = \inf \{s \in [0,t] : \|R^{z,t,x}_s \|_{\R^d} \ge n\}, \quad n \in \N,
\]
one obtains from \eqref{tildeuIto} that
\be \label{tildeuE}
\begin{aligned}
& \tilde{u}(t,x) =  \E \edg{\varphi(R^{z,t,x}_{t} )
\exp \brak{\int_0^t B_z(t-s, R^{z,t,x}_s) ds} \mathbbm{1}_{\crl{\tau_n \ge t}}}\\
& + \E \edg{\tilde{u} \brak{t-\tau_n, R^{z,t,x}_{\tau_n} }
\exp \brak{\int_0^{\tau_n} B_z(t-s, R^{z,t,x}_s) ds} \mathbbm{1}_{\crl{\tau_n < t}}}
\quad \mbox{for all } n \in \N.
\end{aligned}
\ee
We know from \cref{le:sde-solution-exponential-integrability}.\eqref{it:sde-solution-exponential-integrability:item1} that
\be \label{Cp}
c_p := \E \edg{\sup_{s \in [0,t]} \exp \brak{p \| R^{v,t,x}_s \|_{\R^d}}} < \infty
\quad \mbox{for all } p \in (0,\infty).
\ee
Moreover, it follows from \eqref{tildeubound} and \eqref{it:B} that there exist 
constants $a,c \ge 0$ such that 
\be \label{boundu}
\sup_{s \in [0,t]} \abs{\tilde{u} \brak{t-s, f(s)}}
\exp \brak{\int_0^s B_z(t-r, f(r)) dr} \le
\sup_{s \in [0,t]} a e^{c \|f(s) \|_{\R^d}}
\ee
for every continuous function $f \colon [0,T] \to \R^d$, which together with 
\cref{le:sde-solution-exponential-integrability}.\eqref{it:sde-solution-exponential-integrability:item1} 
and Lebesgue's dominated convergence theorem implies 
that the first expectation in \eqref{tildeuE} converges to 
\[
 \E \edg{\varphi(R^{v,t,x}_{t} ) \exp \brak{\int_0^t B_z(t-s, R^{v,t,x}_s) ds}}
 \quad \mbox{for } n \to \infty.
\]
In addition, it can be seen from \eqref{boundu}
that the second expectation in \eqref{tildeuE} is bounded by 
$a e^{c n} \P[\tau_n < t]$. But, by \eqref{Cp}, one has
\[
\P[\tau_n < t] \le \P \edg{\sup_{s \in [0,t]} \|R^{v,t,x}_s \|_{\R^d} \ge n}
\le c_p e^{-pn} \quad \mbox{for all } p \in (0,1).
\]
So for $p > c$, the second expectation in \eqref{tildeuE} is bounded by 
$a c_p e^{(c-p)n}$, which converges to $0$ for $n \to \infty$. This shows that
\[
\tilde{\fu}(t,x,\omega) = u_{Z(\omega)}(t,x) \quad \mbox{and therefore,}
\quad \tilde{X}_t(x, \omega) = X_t(x, \omega)
\]
for all $t \in [0,T]$, $x \in \R^d$ and $\P$-a.a. $\omega \in \Omega$, which completes 
the proof of \cref{thm:main}.
\hfill \qed

\begin{lemma}\label{le:L-operator}
	Let $d\in \N$, and consider the mapping 
	$\cL \colon C^2(\R^d, \R) \to C(\R^d, \R)$ given by 
	\begin{equation}
	\label{def-L-operator-le}
	\cL g(x)
	= \tfrac{1}{2} \operatorname{Trace}_{\R^d} \bigl( a \operatorname{Hess} (g)( x ) \bigr)
	- \left \langle \nu( x ), \nabla g(x) \right\rangle_{ \R^d }, \quad 
	g \in C^2(\R^d, \R), \, x \in \R^d,
	\end{equation}
	where $a$ is a symmetric $d \times d$-matrix and
	$\nu$ a function in $C^1(\R^d, \R^d)$. 
	Then 
	\[
	\label{le:eq:L-operator}
	\begin{split}
	e^{-f(x)} \cL(e^f g)(x)
	= \cL g( x ) 
	+ \left\langle a \nabla f( x ), \nabla g( x )\right\rangle_{\Rd}
	+ \edg{ \tfrac{1}{2}\left \langle a \nabla f( x ), \nabla f( x )\right \rangle_{\Rd} 
	+ \cL f( x ) } g( x )
	\end{split}
	\]
	for all $f,g \in C^2(\R^d, \R)$ and $x \in \R^d$.
\end{lemma}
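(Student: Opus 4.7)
The plan is to establish the identity by a direct computation, exploiting the product rule for gradients and Hessians together with the symmetry of the matrix $a$. Fix $f, g \in C^2(\R^d, \R)$ and $x \in \R^d$. I will first compute $\nabla(e^f g)$ and $\operatorname{Hess}(e^f g)$ at $x$, then substitute into the definition of $\cL$ from \eqref{def-L-operator-le}, and finally divide by $e^{f(x)}$ and collect terms.

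For the first step, the product rule yields $\nabla(e^f g)(x) = e^{f(x)} [g(x) \nabla f(x) + \nabla g(x)]$. Differentiating once more componentwise, for every $i, j \in \{1, 2, \ldots, d\}$ one obtains
\[
\partial_i \partial_j (e^f g)(x) = e^{f(x)} \bigl[ g(x) \partial_i f(x) \partial_j f(x) + g(x) \partial_i \partial_j f(x) + \partial_i f(x) \partial_j g(x) + \partial_j f(x) \partial_i g(x) + \partial_i \partial_j g(x) \bigr],
\]
so the Hessian of $e^f g$ at $x$ equals $e^{f(x)}$ times the sum of the rank-one matrix $g(x) \nabla f(x) \nabla f(x)^T$, the term $g(x) \operatorname{Hess}(f)(x)$, the symmetrised outer product $\nabla f(x) \nabla g(x)^T + \nabla g(x) \nabla f(x)^T$, and $\operatorname{Hess}(g)(x)$. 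Taking the trace against $a$ and using that $a$ is symmetric (so $\operatorname{Trace}_{\R^d}(a \nabla f(x) \nabla g(x)^T) = \operatorname{Trace}_{\R^d}(a \nabla g(x) \nabla f(x)^T) = \langle a \nabla f(x), \nabla g(x) \rangle_{\R^d}$) gives
\[
\begin{split}
\tfrac{1}{2}\operatorname{Trace}_{\R^d}\bigl(a \operatorname{Hess}(e^f g)(x)\bigr)
& = e^{f(x)} \Bigl[\tfrac12 \langle a \nabla f(x), \nabla f(x) \rangle_{\R^d} g(x) + \tfrac12 \operatorname{Trace}_{\R^d}\bigl(a \operatorname{Hess}(f)(x)\bigr) g(x) \\
& \qquad + \langle a \nabla f(x), \nabla g(x) \rangle_{\R^d} + \tfrac12 \operatorname{Trace}_{\R^d}\bigl(a \operatorname{Hess}(g)(x)\bigr) \Bigr].
\end{split}
\]

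Combining this with the drift term $\langle \nu(x), \nabla(e^f g)(x) \rangle_{\R^d} = e^{f(x)} [\langle \nu(x), \nabla f(x) \rangle_{\R^d} g(x) + \langle \nu(x), \nabla g(x) \rangle_{\R^d}]$ and the definition \eqref{def-L-operator-le} of $\cL$, I find
\[
\cL(e^f g)(x) = e^{f(x)} \Bigl[ \cL g(x) + \langle a \nabla f(x), \nabla g(x) \rangle_{\R^d} + \bigl(\tfrac12 \langle a \nabla f(x), \nabla f(x) \rangle_{\R^d} + \cL f(x) \bigr) g(x) \Bigr].
\]
Multiplying both sides by $e^{-f(x)}$ yields the claimed identity. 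The proof is entirely routine; the only care needed is in the second step, namely tracking the cross terms in the Hessian and combining them via the symmetry of $a$ to produce the factor in $\langle a \nabla f, \nabla g \rangle_{\R^d}$ (rather than two separate terms). No other obstacles arise.
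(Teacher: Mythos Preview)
Your proof is correct and follows essentially the same approach as the paper: compute the gradient and Hessian of $e^f g$ via the product and chain rules, use the symmetry of $a$ to merge the two cross terms into a single $\langle a \nabla f, \nabla g\rangle_{\R^d}$, and substitute into the definition of $\cL$.
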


\begin{proof}[Proof]
If $f ,g \in C^2(\R^d, \R)$, one obtains from the product and chain rule that
	\be \label{eq:le-L-1-der}
	\begin{split}
	\nabla(e^f g)( x ) 
	&= \nabla e^f(x) g(x) + e^{f(x)} \nabla g(x) 
	= e^{f(x)} \nabla f(x) g(x) + e^{f(x)} \nabla g(x) 
	\\
	& =
	e^{f( x )} 
	\bigl[\nabla f( x ) g(x) + \nabla g( x ) \bigr] \quad \mbox{for all } x \in \R^d.
	\end{split}
	\ee
	Another application of the product and chain rule gives
	\[
	\label{eq:le-L-2-der}
	\begin{split}	
	\tfrac{\partial^2}{\partial x_i \partial x_j}( e^f g)( x ) 
	&= e^{f(x)} \tfrac{\partial}{\partial x_i} f( x ) \left[\tfrac{\partial}{\partial x_j} f(x) \, g(x) 
	+
	\tfrac{\partial}{\partial x_j} g(x) \right] 
	\\
	& + 
	e^{f(x)} \left[ \tfrac{\partial^2}{\partial x_i \partial x_j} f(x) \, g(x) 
	+ \tfrac{\partial}{\partial x_j} f(x) \tfrac{\partial}{\partial x_i} g( x ) 
	+ \tfrac{\partial^2}{\partial x_i \partial x_j} g( x ) \right]
	\end{split}
	\]
	for all $i, j \in \{1, 2, \ldots, d\}$ and $x\in\Rd$, which implies
	\begin{align*}
	&e^{-f(x)} \operatorname{Hess} (e^f g)(x) 
	\\ \nonumber
	&= 
	\operatorname{Hess} (g)( x ) 
	+ \nabla f(x) (\nabla g(x))^T
	+ \nabla g(x) (\nabla f(x))^T
	+ g(x) \bigl[ \nabla f(x) (\nabla f(x))^T + \operatorname{Hess}(f)( x ) \bigr]
	\end{align*}
	for all $x \in \R^d$. Since $a$ is symmetric, one has
	$ \operatorname{Trace}_{\R^d}( a p q^T ) = \langle a p, q \rangle_{\R^d} 
	= \langle a q, p \rangle_{\R^d} = \operatorname{Trace}_{\R^d}(a q p^T)$ 
	vor all $p,q \in \R^d$. Therefore, 
one obtains from \eqref{def-L-operator-le} and \eqref{eq:le-L-1-der} that
	\[
	\begin{split}
	&e^{-f(x)} \cL(e^f g)(x)
	= \tfrac{1}{2}\operatorname{Trace}_{\R^d}\bigl( a e^{-f(x)}\operatorname{Hess}(e^f g)(x) \bigr) 
	- \langle \nu(x), e^{-f(x)} \nabla(e^f g)( x ) \rangle_{ \R^d } 
	\\
	&
	= \tfrac12 \operatorname{Trace}_{\R^d} \bigl( 
	a \operatorname{Hess} (g)( x ) 
	+ a \nabla f(x) (\nabla g(x))^T
	+ a \nabla g(x) (\nabla f(x))^T
	\\
	& \quad 
	+ g( x ) a \nabla f(x) (\nabla f(x))^T
	+ g(x) a \operatorname{Hess} (f)( x ) \bigr) 
	- \left \langle \nu( x ), g(x) \nabla f( x ) + \nabla g( x ) \right\rangle_{ \R^d }
	\\
	&= \tfrac{1}{2} \operatorname{Trace}_{\R^d}\bigl( a \operatorname{Hess} (g)(x) \bigr) 
	- \langle \nu(x), \nabla g(x) \rangle_{ \R^d } 
	\\&\quad
	+
	\langle a \nabla f(x), \nabla g(x) \rangle_{ \R^d }
	+ 
	\tfrac12 \langle a \nabla f(x), \nabla f(x) \rangle_{ \R^d } g( x )
	\\
	& \quad + 
	\tfrac12 \operatorname{Trace}_{\R^d}\bigl( 
	a \operatorname{Hess} (f)(x)
	\bigr) g(x) - \langle \nu(x), \nabla f(x) \rangle_{\R^d} g(x)
	\\
	&=\cL g( x )	
	+ \left \langle a \nabla f( x ), \nabla g( x ) \right\rangle_{\Rd}
	+ \tfrac{1}{2} \left\langle a \nabla f( x ), \nabla f( x ) \right \rangle_{\Rd} g( x )
	+ \cL f( x ) \, g( x ),
	\end{split}
	\]
	for all $x \in \R^d$, which proves the lemma.
\end{proof}
\end{appendix}

\bibliographystyle{acm}

\end{document}